\setlist{itemsep=1pt}
\newtheorem{theorem}{Theorem}[section]
\newtheorem{lemma}[theorem]{{Lemma}}
\newtheorem{assumption}{Assumption}
\newtheorem{definition}{Definition}
 \newtheorem{proposition}[theorem]{Proposition}
 \newtheorem{corollary}[theorem]{{Corollary}}
\newcommand{\floor}[1]{\lfloor #1 \rfloor}
\newcommand{\bigfloor}[1]{\Bigl \lfloor #1 \Bigr \rfloor}
\newcommand{\N}{\mathbb{N}}
\newcommand{\R}{\mathbb{R}}
\newcommand{\E}{\mathbb{E}}
\title{On the Convergence of Step Decay Step-Size for Stochastic Optimization}
\author{Xiaoyu Wang \thanks{
	The Division of Decision and Control Systems, School of Electrical Engineering and Computer Science, KTH Royal Institute of Technology, SE-100 44 Stockholm, Sweden.}
	\and Sindri Magn{ú}sson \thanks{
		Department of Computer and System Science, Stockholm University, Stockholm, Sweden.
		{ Emails}: {\tt wang10@kth.se}, {\tt sindri.magnusson@dsv.su.se}, {\tt mikaelj@kth.se},} 
	\and Mikael Johansson\footnotemark[1]

}
\begin{document}

%\twocolumn

\maketitle

\begin{abstract}

% This document provides a basic paper template and submission guidelines.
% Abstracts must be a single paragraph, ideally between 4--6 sentences long.
% Gross violations will trigger corrections at the camera-ready phase.

%  The performance of stochastic gradient descent (SGD) is highly dependent on the step-size, especially when training deep neural networks. 
%  The step decay step-size (constant and cut) is widely used in practice which preserves fast convergence in the beginning and guarantees to converge in the end. However, as we know its theoretical results are very limited. In this paper, we establish the convergence of SGD with the step decay step-size in the three common cases. %nonconvex, general convex, and strongly convex problems.
%  In the nonconvex case, we introduce a new probability distribution for the output, which is proportional to the inverse of the step-size. %Based on this probability, 
%  We prove a nearly-optimal (up to $\ln T$ factor) rate for step decay step-size and improve the results for exponential decaying step-size and a polynomial decaying step-size. On the other hand, we prove that after $T$ iterates, the error of the last iterate of SGD is: $\mathcal{O}(\ln T/\sqrt{T})$ for general convex and nonsmooth case; $\mathcal{O}(\ln T/T)$ for strongly convex and smooth case which is proved to be tight; $\mathcal{O}(\ln^2 T /T)$ for strongly convex and nonsmooth case. Finally, the numerical experiments demonstrate the efficiency of step decay step-size on various applications. 
 
 The convergence of stochastic gradient descent is highly dependent on the step-size, especially on non-convex problems such as neural network training.
% When training machine learning models, the performance of stochastic %gradient methods is highly influenced by the step-size. 
Step decay step-size schedules (constant and then cut) are widely used in practice because of their excellent convergence and generalization qualities, but their theoretical properties are not yet well understood. We provide the %first
convergence results for step decay in the non-convex regime, ensuring that the gradient norm vanishes at an $\mathcal{O}(\ln T/\sqrt{T})$ rate. We also provide the %first%
convergence guarantees for general (possibly non-smooth) convex problems, ensuring an $\mathcal{O}(\ln T/\sqrt{T})$ convergence rate. Finally, in the strongly convex case, we establish an $\mathcal{O}(\ln T/T)$ rate for smooth problems, which we also prove to be tight, and an $\mathcal{O}(\ln^2 T /T)$ rate without the smoothness assumption. We illustrate the practical efficiency of the step decay step-size in several large scale deep neural network training tasks.
 
 %the minimax optimal convergence rate of $\mathcal{O}(\ln^2T/T)$
 
\end{abstract}

\section{Introduction}
We focus on stochastic programming problems on the form
\begin{equation}\label{P1}
\min_{x\in \mathcal{X} }\,\, f(x):= \E_{\xi \sim \Xi }[ f(x; \xi)].
\end{equation} 
Here, $\xi$ is a random variable drawn from  some source distribution $\Xi$ over an arbitrary probability space and $\mathcal{X}$ is a closed, convex subset of $\R^d$. 
This problem is often encountered in machine learning applications, such as training of deep neural networks. Depending the specifics of the application, the function $f$ can either nonconvex, convex or strongly convex; it can be smooth or non-smooth; and it may also have additional structure that can be exploited.
%satisfy various structural assumptions.

Despite the many advances in the field of stochastic programming,  the stochastic gradient descent (SGD) method \citep{SGD-1951,SGD-complex} remains important and is arguably still the most popular method for solving \eqref{P1}. The SGD method updates the decision vector $x$ using the following recursion 
%Its general format is shown as follows:
\begin{align}
x_{t+1} = \Pi_{\mathcal{X}}(x_t-\eta_t\hat{g}_t)
\end{align}
where $\hat{g}_t$ is an unbiased estimation of the gradient (or subgradient) and $\eta_t > 0$ is the step-size (learning rate). 
%The simplicity and effectiveness of SGD make it more attractive than some of the more sophisticated methods for applied machine %learning.
% for applied machine learning than some of the more sophisticated methods.

The step-size is a critical parameter which controls the rate (or speed) at which the model learns and guarantees that the SGD iterates converge to an optimizer of (\ref{P1}).  Setting the step-size too large will result in iterates which never converge; and setting it too small leads to slow convergence and may  even cause the iterates to get stuck at bad local minima. As long as the iterates do not diverge,  a large constant step-size promotes fast convergence but only to a large neighborhood of the optimal solution. To increase the accuracy, we have to decrease the step-size. 

The traditional approach is to decrease the step-size in every iteration, typically as 
%allows Generally speaking, a relatively large constant step-size allows the model to learn faster to reach a (large) neighborhood of the %optimal solution. However, with the noise accumulations, the training process may be unstable and hard to keep the faster %convergence rate at the final stage (observed in Figure \ref{fig:const:stepdecay}). 
%We have to decrease the step-size at some points as a trade-off to reduce the noise.  
%
%Recently, there are many strategies proposed to select the step-size for providing theory or practical evidences. 
%The polynomial step-size for example 
$\eta_0/t$ or $\eta_0/\sqrt{t}$. Both these step-size schedules have been studied extensively and guarantee a non-asymptotic convergence of SGD~\citep{Moulines-Bach2011, Lacoste-Schmidt-Bach2012,Rakhlin-Shamir-Sridharan2011, hazan2014beyond, Shamir-Zhang2013, Gower-etal.2019}. However, from a practical perspective, these step-size policies often perform poorly, since they begin to decrease too early. \citet{Gower-etal.2019} have proposed to use a constant step-size for the first $4\kappa$ iterates (where $\kappa$ is the condition number) and then shift to a $1/t$ step-size to guarantee convergence for strongly convex functions. However, this strategy requires knowledge of the condition number and is not suitable for the nonconvex setting.

For non-convex problems, such as those which arise in training of deep neural networks, the most popular step-size policy in practice is the step decay step-size~\citep{imagenet, he2016deep, huang2017densely}. This step-size policy starts with a relatively large constant step-size and then cuts the step-size by a fixed number (called decay factor) at after a given number of epochs. Not only does this step-size result in a faster initial convergence, but it also guarantees that the SGD iterates eventually converge to an exact solution. In \citet{yuan2019stagewise}, the step decay step-size (which they call stagewise step decay) was shown to accelerate the convergence of SGD compared to polynomially decay step-sizes, such as $1/t$ and $1/\sqrt{t}$. \citet{ge2019step} prove significant improvements of the step decay step-size over any polynomial decay step-size for least squares problems.  In fact, the step decay step-size is the default choice in many deep learning libraries, such as TensorFlow~\citep{tensorflow} and PyTorch~\citep{pytorch}; both use a decay rate of $0.1$ and user-defined milestones when the step-size is decreased.  The milestones are often set in advance by experience. If we know some quantities or a certain conditions to characterize when the function is optimized, it will be ideal to decide when to drop the step-size. However, it is hard and time-consuming to get access to these quantities or conditions in practice.  

% {\color{blue}
% 	\begin{itemize}
% 		\item the last two sentances above are not clear to me, so I kept them as is.
% 	\end{itemize}
% }
\citet{ge2019step} assume that the SGD algorithm will run for a fixed number $T$ of iterations. They then analyze a step decay step-size with decay rate $1/2$ applied every  $T/\log_2 T$ iterations and establish a near-optimal $\mathcal{O}(\log_2 T/T)$ convergence rate for least-squares problems. However, its non-asymptotic convergence for general strongly convex functions, convex functions or nonconvex functions is not analyzed. Motivated by this, in this paper, we focus on SGD with the step decay step-size which uses a general decay factor $\alpha$ ($\alpha > 1$)  rather than the fixed number $2$ \citep{ge2019step, yuan2019stagewise, davis2019stochastic,davis2019low}. This is more relevant in practice. %The decay factor controls how much the step-size jumps from current phase to the next and the number of the phases (e.g.,  $\log_{\alpha}(T)/2$) we run. 
%We only have two parameters (initial step-size $\eta_0$ and decay factor $\alpha$) to tune, instead of selecting a set of milestones in practice. 
% {\color{blue}
% 	\begin{itemize}
% 		\item The last two sentences need some more polishing; perhaps we should only focus on the contributions relative to Ge. Because I am not convinced that it is an advantage to fix the milestones\dots
% 	\end{itemize}
% }

\subsection{Main Contributions}
This work establishes \replaced{novel}{the} convergence \replaced{guarantees for}{behavior of} SGD with \added{the} step decay step-size \replaced{on}{under several cases including} smooth (nonconvex), general convex, and strongly convex \replaced{optimization problems}{cases}. More precisely, we make the following contributions:
\begin{itemize}
    \item We propose a non-uniformly probability rule $P_t \propto 1/\eta_t$ for \added{selecting} the output in the nonconvex and smooth setting. Based on this rule, 
    \begin{itemize}
        \item we establish a near-optimal $\mathcal{O}(\ln T/\sqrt{T})$ rate for SGD with the step decay step-size;
        \item we improve the results for exponential decay step-size \citep{li2020exponential};
        \item we remove the $\ln T$ factor \replaced{in}{of} the \added{best known} convergence rate for the classic $1/\sqrt{t}$ step-size.
    \end{itemize}
    \item For \added{the} general convex case,  we prove that the step decay step-size at the last iterate can achieve a \replaced{near}{nearly}-optimal convergence rate (up to \added{a} $\ln T$ factor).
    \item For  strongly convex problems, we establish the following error bounds \replaced{for}{of} the last iterate under step-decay: \deleted{for step decay step-size are}
    \begin{itemize}
        \item $\mathcal{O}(\ln T/T)$ for smooth problem, which \replaced{we also prove to be tight;}{is also proved to be tight; }
        \item $\mathcal{O}(\ln^2T/T)$ without the smoothness assumption.
    \end{itemize} 
\end{itemize}

\subsection{Related Work}
	For SGD, the best known bound for the expected error of the $T^{\rm th}$ iterate is of ${\mathcal O}(1/\sqrt{T})$ when the objective is convex and smooth with Lipschitz continuous gradient~\citep{SGD-complex,ghadimi2013stochastic}, and of ${\mathcal O}(1/T)$ when the objective is also  strongly convex~\citep{Moulines-Bach2011, Rakhlin-Shamir-Sridharan2011}. Without any further assumptions, these rates are known to be optimal.  If we restrict our attention to diminshing step-sizes, $\eta_t=\eta_0/t$, the best known error bound for strongly convex \added{and nonsmooth} problems is of ${\mathcal O}(\ln T/T)$~\citep{Shamir-Zhang2013}, which is also tight~\citep{harvey2018tight}. This rate can be improved to ${\mathcal O}(1/T)$ by averaging strategies~\citep{Rakhlin-Shamir-Sridharan2011,Lacoste-Schmidt-Bach2012,Shamir-Zhang2013} or a step decay step-size~\citep{hazan2014beyond}. For smooth  nonconvex functions,  \citet{ghadimi2013stochastic} established an  $\mathcal{O}(1/\sqrt{T})$ rate for SGD with constant step-size $\eta_t = \mathcal{O}(1/\sqrt{T})$. Recently, \citet{drori2020complexity} have proven that this error bound is tight up to a constant, unless additional assumptions are made.

	The step decay step-size was used for deterministic subgradient methods in \citet{goffin1977} and \citet{shor2012}.  Recently, it has been employed to improve the convergence rates under various conditions: local growth (convex) \citep{xu2016accelerated}, Polyak-L\'ojasiewicz (PL) condition~\citep{yuan2019stagewise}, sharp growth (nonconex)~\citep{davis2019stochastic}. \citet{davis2019low} also apply the step decay scheme to prove the high confidence bounds in stochastic convex optimization. Most of these references consider the proximal point algorithm \replaced{which introduce a quadratic term to the original problem.}{and weakly convex objective functions.} In contrast, we study the
	performance of step decay step-size for standard SGD. Compared to  \citet{hazan2014beyond} and \citet{yuan2019stagewise}, where the inner-loop size $S$ is growing exponentially, we use a constant value of $S$, which is known to work better in practice. In the extreme case when $S=1$, step-decay reduces to the exponentially decaying step-size which ~\citet{li2020exponential} have recently studied under the PL condition and a general smoothness assumption.

	A number of adaptive step-size selection strategies have been proposed for SGD (\emph{e.g.}, \citep{AdaGrad, RMSProp, Adam, AdamW, polyak2020}), some of which result in step-decay policies~\citep{auto-line-search, lang2019using, zhang2020statistical}. For example, \citet{lang2019using} develop a statistical procedure to automatically determine when the SGD iterates with a constant step-size no longer make progress, and then halve the step-size. Empirically, this automatic scheme is competitive with the best hand-tuned step-decay schedules, but no formal guarantees for this observed behaviour are given. 
	
The remaining part in this paper are organized as follows. Notation and basic definitions are introduced in Section~\ref{sec:pre}. In Section \ref{sec:nonconvex}, we analyze the convergence rates of step decay step-size on nonconvex case and propose a novel non-uniform sampling rule for the algorithm output. The convergence for general convex and strongly convex functions are investigated in Sections~\ref{sec:convex} and~ \ref{sec:sc}, respectively. Numerical results of our algorithms are presented and discussed in Section \ref{sec:numerical}. Finally, conclusions are made in Section~\ref{sec:conclusion}.

\section{Preliminaries}\label{sec:pre}

%%%%%%%%%%%%%%%%
In this part, we will give some definitions and notations used throughout the paper.
\begin{definition}\label{assump:gradient}
\begin{enumerate}[label=\textbf{(\arabic*)}]
    \item[]
    \item \label{assump:bounded-variance} The stochastic gradient oracle $\tilde{\mathcal{O}}$ is variance-bounded if for any input vector $\hat{x}$, the oracle $\tilde{\mathcal{O}}$ returns a random vector $\hat{g}$ such that $
\E[\left\|\hat{g} - \E[\hat{g}]\right\|^2 ] \leq V^2$.    \item\label{assump:bounded-gradient} The stochastic gradient oracle $\tilde{\mathcal{O}}$ is bounded if for any input vector $\hat{x}$, the oracle $\tilde{\mathcal{O}}$ returns a random vector $\hat{g}$ such that $
\E[\left\|\hat{g} \right\|^2 ] \leq G^2$
for some fixed $G > 0$.
\end{enumerate}
\end{definition}

\begin{definition}[$L$-smooth]
The function $f$ is differentiable and $L$-smooth on $\mathcal{X}$ if there exists a constant $L > 0$ such that $\left\|\nabla f(x) - \nabla f(y) \right\| \leq L\left\|x-y\right\|$. 
It also implies that $f(y) \leq f(x) + \left\langle \nabla f(x), y-x \right\rangle + \frac{L}{2}\left\|x-y\right\|^2$ for any $x, y \in \mathcal{X}$.

Especially, when $f$ is non-differentiable on $\mathcal{X}$, we define $f$ is $L$-smooth with respect to $x^{\ast}$ if $
f(x) - f(x^{\ast}) \leq \frac{L}{2}\left\|x-x^{\ast}\right\|^2, \forall x \in \mathcal{X},$
with $L > 0$.
\end{definition}
The smooth property with respect to $x^{\ast}$ has been considered by \citet{Rakhlin-Shamir-Sridharan2011}.

\begin{definition}[$\mu$-strongly convex]
The function $f$ is $\mu$-strongly convex on $\mathcal{X} \subseteq \R^d$ if $
f(y) \geq f(x) + \left\langle g, y-x\right\rangle + \frac{\mu}{2}\left\|y-x\right\|^2, \, \forall  x, y\in \mathcal{X}, g \in \partial f(x),$
with $\mu > 0$.
\end{definition}

\begin{definition}[Convex]
The function $f$ is convex on $\mathcal{X}$ if $f(y) \geq f(x) + \left\langle g, y-x \right\rangle $  for any $g \in \partial f(x)$ and $x,y\in \mathcal{X}$.
\end{definition}

% \begin{definition}[Upper bounded]
% The objective function $f$ is upper bounded by $f_{\max}$, that is $f(x) \leq f_{\max}$ for any $x \in \R^d$.
% \end{definition}

Throughout the paper, we assume the objective function $f$ is bounded below on $\mathcal{X}$ and let $f^{\ast}$ denote its infimum. If $f$ is strongly convex, let  $x^{\ast}$ be the unique minimum point of $f$ and $f^{\ast} = f(x^{\ast})$.

% \begin{enumerate}[label=\textbf{(\arabic*)}]
% \item[]
% \item\label{assump:nonconvex:1} The objective function $f$ is $L$-smooth on $\R^d$ if there exists a constant $L > 0$ such that $f(y) \leq f(x) + \left\langle \nabla f(x), y-x \right\rangle + \frac{L}{2}\left\|x-y\right\|^2$ for any $x, y \in \R^d$.
% \item\label{assump:nonconvex:2} The objective function $f$ is bounded by $f_{\max}$, that is $f(x) \leq f_{\max}$ for any $x \in \R^d$.
% \end{enumerate}

{\bf Notations:} Let $[n]$ denote the set of $\left\lbrace 1,2,\cdots, n\right\rbrace$ and $\left\|\cdot\right\| := \left\|\cdot\right\|_2$ without specific mention. We use $\lfloor r \rfloor$ and $\lceil r \rceil$ to denote the nearest integer to the real number $r$ from below and above. For simplicity, we assume that $S$, $N$, $\log_{\alpha} T/2$, $\log_{\alpha} T$ and $T/\log_{\alpha} T$ are all integers. %$\ln(\cdot)$ is based on the natural logarithm.

\section{Convergence \deleted{Guarantees} for Nonconvex Problems}
\label{sec:nonconvex}

In this section, we provide \replaced{the first}{a} convergence bounds for SGD with step-decay step-sizes on non-convex problems. We also show that \replaced{our}{the} technical approach \deleted{main analytical idea} \deleted{we use} can be used to improve the best known convergence bounds for both \added{i)} standard $1/\sqrt{t}$ step-sizes and \added{ii)} exponential decay step-sizes.

Before \replaced{proceeding, it is useful to illustrate}{providing the results we overview} the main theoretical novelty that allows us to derive the result. Typically when we study the convergence of SGD for non-convex problems, we analyse \replaced{a random iterate drawn from $\{x_t\}_{t=1}^T$ with some probability $P_t$}{the \added{average of the} output\replaced{s at every iteration where we select iteration $t$ with some probability $P_t$}{which is selected from $\left\lbrace x_t\right\rbrace_{t=1}^{T}$ according to a probability distribution}} \citep{ghadimi2013stochastic,ghadimi2016mini,li2020exponential}. For example,  \citet{ghadimi2013stochastic} \added{provide the following result.} 
\begin{proposition}\label{pro:1sqrt}
Suppose that $f$ is $L$-smooth on $\R^d$ and the stochastic gradient oracle is variance-bounded \added{by $V^2$}. If the step-size $\eta_t = \eta_0/\sqrt{t} \leq 1/L$, then
\begin{align}\label{eq:CB_GL}
\E[\left\|\nabla f(\hat{x}_T)\right\|^2] \leq  \frac{f(x_1)-f^{\ast}}{\eta_0(\sqrt{T}-1)} + \frac{LV^2\eta_0(\ln T {+}1 )}{2(\sqrt{T}-1)},
\end{align}
where $\hat{x}_T$ is randomly chosen from $\left\lbrace x_t\right\rbrace_{t=1}^{T}$ with probability $P_t\propto\eta_t$. \footnote{\added{ In \citet{ghadimi2013stochastic}, $P_t \propto (2\eta_t - L\eta_t^2)$. If $\eta_t$ is far smaller than $1/L$, we have $(2\eta_t - L\eta_t^2) \approx 2\eta_t$. For simplicity, we rewrite the probability as $P_t \propto \eta_t$ to show the results.}}
\end{proposition}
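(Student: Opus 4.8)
The plan is to reproduce the classical descent-lemma argument of Ghadimi--Lan. Since $\mathcal{X}=\R^d$ here, the update is simply $x_{t+1}=x_t-\eta_t\hat g_t$, so $L$-smoothness along the trajectory gives
\[
f(x_{t+1}) \le f(x_t) - \eta_t\langle \nabla f(x_t),\hat g_t\rangle + \tfrac{L\eta_t^2}{2}\|\hat g_t\|^2 .
\]
Taking the conditional expectation given the history up to step $t$, using unbiasedness $\E_t[\hat g_t]=\nabla f(x_t)$ and the variance bound $\E_t[\|\hat g_t\|^2]\le \|\nabla f(x_t)\|^2 + V^2$, yields
\[
\E_t[f(x_{t+1})] \le f(x_t) - \Bigl(\eta_t - \tfrac{L\eta_t^2}{2}\Bigr)\|\nabla f(x_t)\|^2 + \tfrac{L\eta_t^2 V^2}{2}.
\]

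Next I would use the step-size restriction $\eta_t\le 1/L$ to lower bound $\eta_t - \tfrac{L\eta_t^2}{2}\ge \tfrac{\eta_t}{2}$, rearrange to isolate $\tfrac{\eta_t}{2}\|\nabla f(x_t)\|^2$, take total expectations, and telescope over $t=1,\dots,T$; using $\E[f(x_{T+1})]\ge f^{\ast}$ gives
\[
\sum_{t=1}^{T}\eta_t\,\E[\|\nabla f(x_t)\|^2] \le 2\bigl(f(x_1)-f^{\ast}\bigr) + LV^2\sum_{t=1}^{T}\eta_t^2 .
\]
Because $\hat x_T$ is drawn with $P_t\propto\eta_t$, the left-hand side divided by $\sum_t\eta_t$ is exactly $\E[\|\nabla f(\hat x_T)\|^2]$, so it remains to estimate the two series via the standard integral comparisons $\sum_{t=1}^{T}1/\sqrt t \ge \int_1^{T+1}x^{-1/2}\,dx = 2(\sqrt{T+1}-1)\ge 2(\sqrt T-1)$ and $\sum_{t=1}^{T}1/t \le 1+\int_1^{T}x^{-1}\,dx = \ln T + 1$. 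Substituting $\eta_t=\eta_0/\sqrt t$ gives $\sum_t\eta_t\ge 2\eta_0(\sqrt T-1)$ and $\sum_t\eta_t^2\le\eta_0^2(\ln T+1)$; dividing through produces the stated bound.

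I do not expect a genuine obstacle here, since the argument is essentially that of \citet{ghadimi2013stochastic}; the only points that need care are matching the constants exactly (the factor $2$ coming from $\eta_t-L\eta_t^2/2\ge\eta_t/2$, and the sharp integral bounds on the harmonic-type sums), and, if one prefers to keep the original weighting $P_t\propto 2\eta_t-L\eta_t^2$ rather than the simplified $P_t\propto\eta_t$ of the footnote, carrying $2\eta_t-L\eta_t^2\ge\eta_t$ in the denominator instead of discarding the quadratic term at the start.
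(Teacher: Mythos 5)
Your proposal is correct and follows essentially the same route as the paper: the paper first isolates the one-step descent inequality $\frac{\eta_t}{2}\E[\|\nabla f(x_t)\|^2]\le \E[f(x_t)]-\E[f(x_{t+1})]+\frac{LV^2\eta_t^2}{2}$ as a separate lemma (which you rederive inline via the same conditional-expectation and $\eta_t\le 1/L$ argument), then telescopes, applies the weighting $P_t\propto\eta_t$, and uses the identical integral comparisons $\sum_t\eta_t\ge 2\eta_0(\sqrt T-1)$ and $\sum_t\eta_t^2\le\eta_0^2(\ln T+1)$. The constants match exactly, so there is nothing to correct.
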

\added{Since $\eta_t$ is decreasing,  $P_t\propto\eta_t$ means that initial iterates are given higher weights in the average in Equation~\eqref{eq:CB_GL} than the final iterates. This contradicts the intuition that the gradient norm decreases as the algorithm progresses. Ideally, we should do the opposite, \emph{i.e}. put high weights on the final iterates and low weights on the initial iterates.  
This is exactly what we do to obtain convergence bounds for step decay step-size: we use the probability $P_t \propto 1/\eta_t$ instead of $P_t \propto \eta_t$.
This is especially important when the step-sizes decrease exponentially fast, like in step decay and exponential decay step-sizes.
For example, suppose that  $\eta_t\propto 0.9^{-t}$ and $T=100$.  Then with $P_t \propto \eta_t$ we pick the output from the first 10 iteration with $65\%$ probability. On the other hand, with $P_t \propto 1/\eta_t$ we pick the output from the last 10 iterations with $65\%$ probability. 
We illustrate this better in Figure~\ref{fig:probability}.}

% \deleted{employ the probability $P_t = \frac{2\eta_t - L\eta_t^2}{\sum_{t=1}^{T}(2\eta_t - L\eta_t^2)}$ to draw the output.  Note that $P_t$ is not a good probability distribution when $\eta_t$ is decreasing. Because the later iterates are less likely to be selected. This is contradictory to our experience. 
% Instead we use the probability distribution $P_t= =\frac{1/\eta_t }{\sum_{t=1}^{T}1/\eta_t} \propto 1/\eta_t $ which implies that the later iterates can be selected with higher probability. We illustrate this later in Figure \ref{fig:probability}.}

In the following three subsections we \replaced{perform such an analysis to}{use this idea to, respectively,} 1) provide convergence bounds for step decay step-size (where no bounds existed before) 2) provide improved convergence rate results for exponential decay step-size, and 3) improve the convergence bound for $1/\sqrt{t}$ step-size.

\subsection{Convergence Rates under the Step Decay Step-size}
%esults of Step Decay Step-Size on Nonconvex case}

%\added{ \textbf{Sindri: 1) In Algorithm 1 in inputs, we are missing T? and also S and N are not really inputs 2) Below I try to include more description to the algorithm. I feel that it is more useful that we shortly describe the idea, rather than just relate it to the \citet{ge2019step} paper. Many readers probably don't know or remember \citet{ge2019step} so well. }}

%\added{ \textbf{Sindri: It would be strong if we could discuss $S$ a bit, why do we choose it like this? Then we just say that we focus on $S=2T/\log(T)$ in the theory and simulation. We should explain a bit why this is a good/reasonable choice of $S$. }}

The step decay step-size (termed Step-Decay) is widely used for training of deep neural networks. This policy decreases the step-size by a constant factor $\alpha$ at every $S$ iterations; see Algorithm~1. Despite its widespread use, we are unaware of any convergence results for this algorithm in the non-convex regime. Next, we will establish the first such guarantees for an iterate of step decay SGD drawn by the method described above.
%By using the above idea for choosing the output we can obtain a %convergence bound for Step-decay.

 In practical deep neural network training, $S$ is typically a hyper-parameter selected by experience. To make theoretical statements we must consider a particular choice of $S$. In our theoretical and numerical results we focus on $S=2T/\log_{\alpha}(T)$.  Note that we can use $\alpha>1$ to obtain the  desired $S$. We have the following main result.

\begin{algorithm}[tb]
	\caption{SGD with step decay step-size on nonconvex case}\label{alg:nonconvex}
\begin{algorithmic}
   \STATE {\bfseries Input:} initial point $x_1^1$, initial step-size $\eta_0$, decay factor $\alpha > 1$, the number of iterations $T$, inner-loop size $S$, outer-loop size $N=T/S$
		\FOR{ $t =  1: N$ }
	    \STATE $\eta_t = \eta_0/\alpha^{t-1}$
		\FOR{ $i = 1: S$}
		\STATE Query a stochastic gradient oracle at $x_i^t$ to get a vector $\hat{g}_i^t$ such that $\E[\hat{g}_i^t] = \nabla f(x_i^t)$ \\\vspace{0.01in}
		\STATE $x_{i+1}^t =x_i^t - \eta_t  \hat{g}_i^t$
		\ENDFOR
		\STATE $x_1^{t+1} = x_{S+1}^{t}$
		\ENDFOR
		\STATE {\bfseries Return:} $\hat{x}_T$ is randomly chosen from all the previous iterations $\left\lbrace x_i^t\right\rbrace$ with probability $P_i^t = \frac{1/\eta_t}{S\sum_{t=1}^{N}1/\eta_t}$ where $i\in [S]$ and $t \in [N]$
% 		\STATE {\bfseries  Return:} \\
% 		$\hat{x}_{T}$ is uniformly chosen from $\left\lbrace x_{1}^{t^{'}}, x_{2}^{t^{'}}, \cdots, x_{S}^{t^{'}} \right\rbrace$, where the integer $t^{'} \in [N]$ is chosen at random with probability $P_t = \frac{1/\eta_t}{\sum_{l=1}^{N}1/\eta_t} $

% \textcolor{blue}{\textbf{SM: Since we write here that the output is chosen with probability $P_t$, a reviewer might think that we do that also in the simulations. An alternative is to just write about $P_t$ in the theorems. I don't know which is better, it is just something to think about.}} \textcolor{red}{Xiaoyu: It is Okay. We can put the output in the theorem. It is the same as theorem 3.3, 3.4 for exp-decay and $1/\sqrt{t}$. And then it is better to remove output for other algorithms.}

\end{algorithmic}
\end{algorithm}

\iffalse
Before showing the main theoretical results on nonconvex case, we give some assumptions required in our analysis.
\begin{assumption}\label{assump:nonconvex} 
\begin{enumerate}[label=\textbf{(\arabic*)}]
\item[]
\item\label{assump:nonconvex:1} The objective function $f$ is $L$-smooth on $\R^d$ if there exists a constant $L > 0$ such that $f(y) \leq f(x) + \left\langle \nabla f(x), y-x \right\rangle + \frac{L}{2}\left\|x-y\right\|^2$ for any $x, y \in \R^d$.
\item\label{assump:nonconvex:2} The objective function $f$ is bounded by $f_{\max}$, that is $f(x) \leq f_{\max}$ for any $x \in \R^d$.
\end{enumerate}

\end{assumption}

\begin{lemma}\label{lem:nonconvex}
Suppose that $f$ is $L$-smooth on $\R^d$ and the stochastic gradient oracle is variance-bounded. If $\eta_t \leq 1/L$, Consider the SGD algorithm, we have 
\begin{align*}
\frac{\eta_t}{2}\E[\left\|\nabla f(x_t)\right\|^2] \leq \E[f(x_t)] - \E[f(x_{t+1})] + \frac{LV^2\eta_t^2}{2}.
\end{align*}
\end{lemma}
\fi

\begin{theorem}\label{thm:nonconvex:1}
Suppose that the \added{non-convex} objective function $f$ is $L$-smooth  on $\R^d$ and upper bounded by $f_{\max}$ \footnote{The function $f$ is upper bounded by $f_{\max}$ if $f(x) \leq f_{\max}$ for any $x \in \R^d$. } and the stochastic gradient oracle is variance-bounded \added{by $V^2$}.
If \added{we run Algorithm 1 with $T > 1$, $S=2T/\log_{\alpha}(T)$,} $\eta_0 \leq 1/L$\replaced{, and}{ and given any $T \geq 1$, running Algorithm \ref{alg:nonconvex} with any initial point} $x_1^1 \in \R^d$ \replaced{then}
%we have the following convergence guarantee}
 {, we have}
\begin{align*}
\E[\left\| \nabla f(\hat{x}_T)\right\|^2]  \leq A \frac{f_{\max}}{ \eta_0}\cdot \frac{\ln T}{\sqrt{T}-1} + B \frac{LV^2\eta_0}{\sqrt{T}-1},
\end{align*}
where $A=(\alpha-1)/(\alpha^2\ln \alpha )$ and $B=\alpha-1$.
%{\color{blue}
%\noindent If $\alpha=1+A$ ($A > 0$), then
%\begin{align}\label{equ:nonconvex:thm1%}
%\E[\left\| \nabla %f(\hat{x}_T)\right\|^2]  \leq \frac{ %f_{\max}}{ \eta_0(1+A)}\cdot \frac{\ln %T}{\sqrt{T}-1} + \frac{LV^2\eta_0 A %}{\sqrt{T}-1}.
%\end{align}
%}
\end{theorem}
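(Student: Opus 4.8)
\medskip
\noindent\emph{Proof plan.} The plan is to combine the standard one-step descent inequality for $L$-smooth functions — available at every iteration because $\eta_t=\eta_0/\alpha^{t-1}\le\eta_0\le 1/L$ since $\alpha>1$ — with a summation-by-parts estimate tailored to the non-uniform sampling rule $P_i^t\propto 1/\eta_t$.

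First I would derive the per-step bound. Fixing $t\in[N]$, $i\in[S]$ and using $x_{i+1}^t=x_i^t-\eta_t\hat g_i^t$, $\E[\hat g_i^t\mid x_i^t]=\nabla f(x_i^t)$, the variance bound $\E[\|\hat g_i^t-\nabla f(x_i^t)\|^2]\le V^2$, and $\eta_t-\tfrac{L}{2}\eta_t^2\ge\tfrac12\eta_t$, smoothness yields after taking expectations
\[
\tfrac{\eta_t}{2}\,\E[\|\nabla f(x_i^t)\|^2]\;\le\;\E[f(x_i^t)]-\E[f(x_{i+1}^t)]+\tfrac{L}{2}\eta_t^2V^2 .
\]
Summing over $i=1,\dots,S$ inside outer loop $t$ and using $x_1^{t+1}=x_{S+1}^t$ telescopes the function values into $\delta_t:=\E[f(x_1^t)]-\E[f(x_1^{t+1})]$, so $\tfrac{\eta_t}{2}\sum_{i=1}^S\E[\|\nabla f(x_i^t)\|^2]\le\delta_t+\tfrac{SL}{2}\eta_t^2V^2$. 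The key move — the very reason for sampling with $P_i^t\propto1/\eta_t$ — is to divide by $\eta_t^2$ \emph{before} summing over $t=1,\dots,N$, which gives
\[
\sum_{t=1}^N\frac{1}{\eta_t}\sum_{i=1}^S\E[\|\nabla f(x_i^t)\|^2]\;\le\;2\sum_{t=1}^N\frac{\delta_t}{\eta_t^2}+NSLV^2 ,
\]
and by the definition of $\hat x_T$ the left-hand side equals exactly $\bigl(S\sum_{t=1}^N1/\eta_t\bigr)\,\E[\|\nabla f(\hat x_T)\|^2]$.

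It then remains to bound $\sum_t\delta_t/\eta_t^2$ and to evaluate the geometric sums. For the former I would use Abel summation: setting $c_t:=1/\eta_t^2=\alpha^{2(t-1)}/\eta_0^2$ (increasing, with $c_0:=0$) and the tail sums $D_t:=\sum_{s=t}^N\delta_s=\E[f(x_1^t)]-\E[f(x_1^{N+1})]\le f_{\max}-f^{\ast}$, one gets $\sum_{t=1}^N c_t\delta_t=\sum_{t=1}^N(c_t-c_{t-1})D_t\le(f_{\max}-f^{\ast})\,c_N=(f_{\max}-f^{\ast})/\eta_N^2$. I would then insert the parameter choice $S=2T/\log_\alpha T$, whence $N=T/S=\tfrac12\log_\alpha T$, $\alpha^N=\sqrt T$, $1/\eta_N^2=\alpha^{2(N-1)}/\eta_0^2=T/(\alpha^2\eta_0^2)$, $\sum_{t=1}^N1/\eta_t=(\alpha^N-1)/((\alpha-1)\eta_0)=(\sqrt T-1)/((\alpha-1)\eta_0)$ and $NS=T$. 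Dividing through, using $\log_\alpha T=\ln T/\ln\alpha$ and bounding $f_{\max}-f^{\ast}$ by $f_{\max}$ as in the statement, the $\delta_t$-term collapses to $A\,\tfrac{f_{\max}}{\eta_0}\,\tfrac{\ln T}{\sqrt T-1}$ with $A=(\alpha-1)/(\alpha^2\ln\alpha)$ and the $NSLV^2$-term to the second term of order $LV^2\eta_0/(\sqrt T-1)$, which is the claimed bound.

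The step I expect to be the main obstacle is the Abel summation. The differences $\delta_t$ need not be nonnegative — the expected objective can increase over an outer loop when the variance term dominates — so they cannot be controlled individually; rewriting $\sum_t c_t\delta_t$ through the tail sums $D_t$ of the telescoping differences is what lets the crude estimate $D_t\le f_{\max}-f^{\ast}$ be used without loss, and it is the interplay between the geometric growth of $c_t=1/\eta_t^2$ and the geometric decay of $\eta_t$ that produces a mere $\ln T$ (rather than a polynomial) factor in the rate. The remainder — the one-step descent lemma and the evaluation of $\sum_t\alpha^{t-1}$ under the chosen $S$ — is routine.
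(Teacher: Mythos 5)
Your proposal follows essentially the same route as the paper's proof: the same one-step descent lemma, the same division by $\eta_t^2$ matched to the sampling weights $P_i^t\propto 1/\eta_t$, an Abel-summation bound of $\sum_t \delta_t/\eta_t^2$ by $f_{\max}/\eta_N^2$, and the same geometric-sum evaluations under $S=2T/\log_\alpha T$. The only differences are cosmetic (your tail-sum form of Abel summation is a slightly cleaner packaging of the paper's rearrangement of $\sum_t \alpha^{2(t-1)}(\E[f(x_1^t)]-\E[f(x_{S+1}^t)])$), and the one step you gloss over --- the variance term $NSLV^2$ actually produces $\tfrac{(\alpha-1)\log_\alpha T}{2}\cdot\tfrac{LV^2\eta_0}{\sqrt{T}-1}$ rather than the stated $(\alpha-1)\tfrac{LV^2\eta_0}{\sqrt{T}-1}$ --- is glossed over in exactly the same way in the paper's own computation and does not affect the $\mathcal{O}(\ln T/\sqrt{T})$ rate.
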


The theorem establishes the first convergence guarantee \replaced{
for step decay step-size on non-convex problems. It ensures a ${\mathcal O}(\ln T/\sqrt{T})$ convergence rate towards a stationary point, which is}
{of step decay towards a stationary point for non-convex problems. Moreover, the theorem ensures a $\mathcal{O}(\ln T/\sqrt{T})$ convergence rate towards the solution. 
This upper bound} is comparable to the results for $\eta_t=\mathcal{O}(1/\sqrt{t})$ step-size in~\citet{ghadimi2013stochastic} or Proposition~\ref{pro:1sqrt}. However, as illustrated in our experiments in Section~\ref{sec:numerical}, step decay step-size converges faster in practice and tends to find stationary points that generalize better. 

In the extreme case when $\alpha=\sqrt{T}$, the step decay policy produces a constant step-size. In this case, Theorem~\ref{thm:nonconvex:1} matches the existing convergence bounds for SGD in~\citet{ghadimi2013stochastic}. Moreover, in the deterministic case when $V = 0$, Theorem~\ref{thm:nonconvex:1} yields the standard $\mathcal{O}(1/T)$ convergence rate result for deterministic
gradient descent \citep{nesterov2003introductory,cartis2010complexity,ghadimi2016mini}.

 A clear advantage of Theorem~\ref{thm:nonconvex:1} over, e.g., Proposition~\ref{pro:1sqrt}, is that it possible to reduce the effect of the noise $V^2$ in the convergence bound by proper parameter tuning.  In particular, the following result is easily derived from Theorem~\ref{thm:nonconvex:1} (proved in the supplementary material). 
\begin{corollary}\label{cor:nonconvex}
  If we set $\alpha=1+1/V^2$, then under the assumptions of Theorem~\ref{thm:nonconvex:1} we have
\begin{align*}
\E[\left\| \nabla f(\hat{x}_T)\right\|^2]  \leq  \frac{f_{\max}}{ \eta_0}\cdot \frac{\ln T}{\sqrt{T}-1} + \frac{L\eta_0}{\sqrt{T}-1},
\end{align*}  
\end{corollary}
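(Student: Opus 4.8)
The plan is to read this off directly from Theorem~\ref{thm:nonconvex:1}. Fixing $\alpha = 1 + 1/V^2$ is a legal choice of the decay parameter (it satisfies $\alpha > 1$ whenever $V^2 > 0$) and leaves all hypotheses of the theorem intact, so the bound
\[
\E[\left\|\nabla f(\hat{x}_T)\right\|^2] \le A\,\frac{f_{\max}}{\eta_0}\cdot\frac{\ln T}{\sqrt{T} - 1} + B\,\frac{LV^2\eta_0}{\sqrt{T} - 1},\qquad A = \frac{\alpha-1}{\alpha^2\ln\alpha},\quad B = \alpha-1,
\]
applies verbatim. It then suffices to check that, for this particular $\alpha$, the right-hand side is dominated by the expression claimed in the corollary.

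The noise term is immediate: $B = \alpha - 1 = 1/V^2$, hence $B\,LV^2\eta_0 = L\eta_0$, which reproduces the second term exactly. For the optimization-error term I would show $A \le 1$. The only genuinely non-routine step is the scalar inequality $\ln\alpha \ge 1 - 1/\alpha = (\alpha-1)/\alpha$, valid for all $\alpha > 0$; multiplying by $\alpha^2$ gives $\alpha^2\ln\alpha \ge \alpha(\alpha-1)$, and therefore $A = (\alpha-1)/(\alpha^2\ln\alpha) \le 1/\alpha \le 1$. Since for $T > 1$ the prefactor $f_{\max}\ln T/(\eta_0(\sqrt{T}-1))$ is nonnegative (we may take $f_{\max}\ge 0$ without loss of generality), replacing $A$ by $1$ only weakens the bound, and adding the two estimates yields the stated inequality.

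There is essentially no obstacle here; the argument is a one-line substitution together with the elementary bound $\ln\alpha \ge (\alpha-1)/\alpha$. If one prefers to avoid invoking that inequality, the same conclusion follows by verifying that $h(u) := (1+u)^2\ln(1+u) - u \ge 0$ for $u = 1/V^2 \ge 0$, which holds because $h(0) = 0$ and $h'(u) = 2(1+u)\ln(1+u) + u > 0$ for $u > 0$. It is worth remarking in passing that in fact $A \le 1/\alpha < 1$, so the constant $1$ in the corollary is not tight but is reported this way for readability.
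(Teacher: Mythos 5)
Your proof is correct and follows the same route the paper intends: substitute $\alpha=1+1/V^2$ into Theorem~\ref{thm:nonconvex:1}, observe $B\,LV^2\eta_0=L\eta_0$, and bound $A=(\alpha-1)/(\alpha^2\ln\alpha)\le 1/\alpha\le 1$ via $\ln\alpha\ge(\alpha-1)/\alpha$ (the paper assumes $f_{\max}>0$, so replacing $A$ by $1$ indeed only weakens the bound). Your closing observation that $A\le 1/\alpha<1$, so equality is approached only as $\alpha\to 1$ (i.e., large $V^2$), matches the paper's own remark that the bound is tight only in the high-noise regime.
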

The corollary shows us that if we choose $\alpha=1+1/V^2$ in step decay, then the convergence bound will be unaffected by the noise variance $V^2$. This is in contrast to Proposition~\ref{pro:1sqrt} and $1/\sqrt{t}$ step-sizes, where an increased $V^2$ results in a worse upper bound. However, as the proof of Corollary~\ref{cor:nonconvex} reveals, the bound is only tight in the the regime where both noise variance and  iteration counts are large. 

\subsection{A Special Case: Exponentially Decaying Step-Size}
%\subsection{Convergence Results for %Exponential Decay step-size}
\label{sec:exp}

An interesting special case occurs when we set $S=1$ in the step decay step-size. In this case, the step-size will decay exponentially, $\eta_t=\eta_0/\alpha^t$. The first convergence rate results for such step sizes, which we will call Exp-Decay, have only recently appeared in the preprint~\citet{li2020exponential}; we compare our work to those results below.

\iffalse
 Another step-size schedule that is commonly used when training deep neural networks in practice is Exp-decay. Exp-decay is the special case of Step-decay when $S=1$.  In particular, the step-sizes are updated as $ \eta_t= \eta_0 /\alpha^t$. Similarly as step-decay, convergence guarantees are lacking in the non-convex regime. To our knowledge, the only existing convergence bounds are the  unpublished result in~\citet{li2020exponential}, we compare our work to those below. 
\fi

 Clearly, for $S=1$, the exponential decay factor $\alpha>1$ cannot be chosen arbitrarily: if $\alpha$ is too large, then $\eta_t$ will vanish in only a few iterations. 
 %In particular, if $\alpha>1$ is too large, %then the step-size $\eta_t $ will vanish %in only few iterations. 
 To avoid this, we choose a similar form of the decaying factor as~\citet{li2020exponential} and set
 $\alpha= (\beta/T)^{-1/T}$, where $\beta\in[1,T)$.
 The intuition is that at the final iteration $T$ the step-size is on the order of $1/\sqrt{T}$ \added{(setting $\beta=\mathcal{O}(\sqrt{T})$)}, and thus does not vanish over the $T$ iterations. 
 We are now ready to prove the algorithm's convergence. 

\begin{theorem}\label{thm:exp:nonconvex}
Suppose that the non-convex objective function \replaced{$f$ is $L$-smooth and upper bounded by $f_{\max}$, and that the}{satisfies Assumption \ref{assump:nonconvex}. The} stochastic gradient oracle is variance-bounded \added{by $V^2$}.
If we run Algorithm 1 with $T > 1$, $S=1$, $\eta_0 \leq 1/L$,   $x_1^1 \in \R^d$, and $\alpha= (\beta/T)^{-1/T}$, then 
%we have the following convergence guarantee 
 %
%If \added{we run Algorithm 1 with $T\geq1$, $S=2T/\log_{\alpha}(T)$,} $\eta_0 \leq 1/L$\replaced{, and}{ and given any $T \geq 1$, running Algorithm \ref{alg:nonconvex} with any initial point} $x_1^1 \in \R^d$ \replaced{then we have the following convergence guarantee  }{, we have} 
 %
 %
%Consider SGD with exponential step-size \citep{li2020exponential} with $\eta_0 \leq 1/L$.
\begin{align*}
\E[\left\| \nabla f(\hat{x}_T)\right\|^2] \leq \frac{\eta_0\ln(\frac{T}{\beta})}{(\frac{T}{\beta}-1)T}\left[\frac{2f_{\max}}{\eta_0^2}\left(\frac{T}{\beta}\right)^2 + LV^2T\right],
\end{align*}
where $\hat{x}_T$ is randomly drawn from  $\left\lbrace x_t \right\rbrace_{t=1}^{T}$ with probability $P_t = \frac{1/\eta_t}{\sum_{t=1}^{T}1/\eta_t}$.
In particular, $\beta = \sqrt{T}$ yields
%, we can see that 
\begin{align}\label{equ:exp:1}
\E[\left\| \nabla f(\hat{x}_T)\right\|^2] \leq \left(\frac{f_{\max}}{\eta_0} + \frac{LV^2\eta_0}{2}\right)\cdot \frac{\ln T}{\sqrt{T}-1}.
\end{align}

\end{theorem}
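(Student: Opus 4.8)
The plan is to combine the one-step descent inequality for $L$-smooth functions with the non-uniform output rule $P_t \propto 1/\eta_t$ — which reduces $\E[\|\nabla f(\hat{x}_T)\|^2]$ to an explicit ratio of two sums — and then to evaluate that ratio for the schedule $\eta_t=\eta_0/\alpha^t$ with $\alpha=(\beta/T)^{-1/T}$. First I would apply $L$-smoothness along the update $x_{t+1}=x_t-\eta_t\hat{g}_t$, take conditional expectations, and use $\E[\hat{g}_t\mid x_t]=\nabla f(x_t)$ together with $\E[\|\hat{g}_t\|^2\mid x_t]\le\|\nabla f(x_t)\|^2+V^2$ to get $\E[f(x_{t+1})]\le\E[f(x_t)]-(\eta_t-L\eta_t^2/2)\,\E[\|\nabla f(x_t)\|^2]+LV^2\eta_t^2/2$. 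Since $\eta_t\le\eta_0\le 1/L$, the coefficient satisfies $\eta_t-L\eta_t^2/2\ge\eta_t/2$, so dividing through by $\eta_t^2/2$ gives
\[
\frac{1}{\eta_t}\E[\|\nabla f(x_t)\|^2]\;\le\;\frac{2}{\eta_t^2}\bigl(\E[f(x_t)]-\E[f(x_{t+1})]\bigr)+LV^2 .
\]

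Next, since $\hat{x}_T$ is sampled with $P_t=\eta_t^{-1}/\sum_{s=1}^T\eta_s^{-1}$, we have $\E[\|\nabla f(\hat{x}_T)\|^2]=\bigl(\sum_{t=1}^T\eta_t^{-1}\E[\|\nabla f(x_t)\|^2]\bigr)\big/\bigl(\sum_{t=1}^T\eta_t^{-1}\bigr)$, and summing the previous display over $t\in[T]$ bounds the numerator by $2\sum_{t=1}^T\eta_t^{-2}\bigl(\E[f(x_t)]-\E[f(x_{t+1})]\bigr)+LV^2T$. Here the choice $P_t\propto 1/\eta_t$ is what makes the argument work: the weights $\eta_t^{-2}$ are increasing (as $\alpha>1$), so summation by parts rewrites this sum as $\eta_1^{-2}\E[f(x_1)]+\sum_{t=2}^T(\eta_t^{-2}-\eta_{t-1}^{-2})\E[f(x_t)]-\eta_T^{-2}\E[f(x_{T+1})]$; bounding $\E[f(x_t)]\le f_{\max}$ on the nonnegative-coefficient terms, discarding the last term (nonnegative, after absorbing $f^{\ast}$ into $f_{\max}$), and telescoping $\eta_1^{-2}+\sum_{t=2}^T(\eta_t^{-2}-\eta_{t-1}^{-2})=\eta_T^{-2}$ yields $\sum_{t=1}^T\eta_t^{-2}\bigl(\E[f(x_t)]-\E[f(x_{t+1})]\bigr)\le f_{\max}/\eta_T^2$. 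Hence $\E[\|\nabla f(\hat{x}_T)\|^2]\le\bigl(2f_{\max}/\eta_T^2+LV^2T\bigr)\big/\sum_{t=1}^T\eta_t^{-1}$. (With the classical $P_t\propto\eta_t$ one instead telescopes $\sum_t\eta_t\E[\|\nabla f(x_t)\|^2]$ directly, but after dividing by $\sum_t\eta_t$ the noise contribution is of order $LV^2\eta_0$, which does not vanish for a constant $\eta_0$ — this is precisely the point of improvement over \citet{li2020exponential}.)

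Finally I would specialize the schedule. From $\alpha=(\beta/T)^{-1/T}$ we get $\alpha^T=T/\beta$, hence $1/\eta_T^2=\alpha^{2T}/\eta_0^2=(T/\beta)^2/\eta_0^2$ and $\sum_{t=1}^T\eta_t^{-1}=\eta_0^{-1}\sum_{t=1}^T\alpha^t=\eta_0^{-1}\,\tfrac{\alpha}{\alpha-1}\,(T/\beta-1)$. The elementary inequality $1-e^{-x}\le x$ applied with $x=\ln\alpha=\tfrac1T\ln(T/\beta)$ gives $\tfrac{\alpha-1}{\alpha}\le\tfrac1T\ln(T/\beta)$, i.e. $\tfrac{\alpha}{\alpha-1}\ge\tfrac{T}{\ln(T/\beta)}$, so $\sum_{t=1}^T\eta_t^{-1}\ge\frac{(T/\beta-1)\,T}{\eta_0\ln(T/\beta)}$. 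Substituting this lower bound and $1/\eta_T^2=(T/\beta)^2/\eta_0^2$ into the ratio from the previous paragraph produces exactly the stated bound, and setting $\beta=\sqrt{T}$ (so that $T/\beta=\sqrt{T}$ and $\ln(T/\beta)=\tfrac12\ln T$) and simplifying gives \eqref{equ:exp:1}. I expect the only delicate point to be lining up the constants in this last step — in particular choosing the elementary bound relating $\alpha-1$ and $\ln\alpha$ in the direction that controls $\sum_t\eta_t^{-1}$ from below rather than above; everything else is the smoothness descent lemma plus bookkeeping.
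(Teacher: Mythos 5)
Your proposal is correct and follows essentially the same route as the paper's proof: the smoothness descent lemma divided by $\eta_t^2/2$, Abel summation with $\E[f(x_t)]\le f_{\max}$ to telescope the weighted differences to $2f_{\max}/\eta_T^2=\tfrac{2f_{\max}}{\eta_0^2}(T/\beta)^2$, and the lower bound $\sum_t\eta_t^{-1}\ge\frac{(T/\beta-1)T}{\eta_0\ln(T/\beta)}$ via $1-1/\alpha\le\ln\alpha$. The only cosmetic difference is that you telescope directly to $f_{\max}/\eta_T^2$ while the paper evaluates the geometric sum of increments explicitly; the constants and the final specialization to $\beta=\sqrt{T}$ agree.
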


 Theorem~\ref{thm:exp:nonconvex} establishes the convergence of Exp-Decay to a stationary point. With $\beta = \sqrt{T}$, Exp-Decay yields the rate of $\mathcal{O}(\ln T/\sqrt{T})$, which is comparable to the results in \citet{li2020exponential}. However,  in~\citet{li2020exponential}, the $\mathcal{O}(\ln T/\sqrt{T})$-rate is obtained under the (arguably impractical) assumption that the initial step-size $\eta_0$ is bounded by $\mathcal{O}(1/\sqrt{T})$. This means that the initial step-size $\eta_0$ will be small if we plan on running the algorithm for a large number of iterations $T$. This is in conflict to the original motivation for using exponentially decaying step-sizes, namely to allow for large initial step-sizes which decrease as the algorithm progresses. On the other hand, our results in Theorem~\ref{thm:exp:nonconvex} only require that the initial step-size is bounded by $\eta_0\leq 1/L$, which matches the largest fixed step-sizes that ensure convergence.  

 Another advantage of Theorem~\ref{thm:exp:nonconvex} is that it uses the output probability $P_t\propto 1/\eta_t$, instead of $P_t\propto \eta_t$ as in~\citet{li2020exponential}. 
 As discussed in the beginning of the section (and in Figure~\ref{fig:probability}), $P_t\propto 1/\eta_t$ means that the output is much more likely to be chosen during the final iterates, whereas with $P_t\propto \eta_t$ the output is more likely to come from the initial iterates. 
 Therefore, Theorem~\ref{thm:exp:nonconvex} better reflects the actual convergence of the algorithm, since in practice it is typically the final iterate that is used as the trained model. We illustrate this better in our experiments in Section~\ref{Sec:Experiments}.
 \begin{figure}[ht]
\vskip 0.1in
\begin{center}
\centerline{\includegraphics[width=0.5\columnwidth]{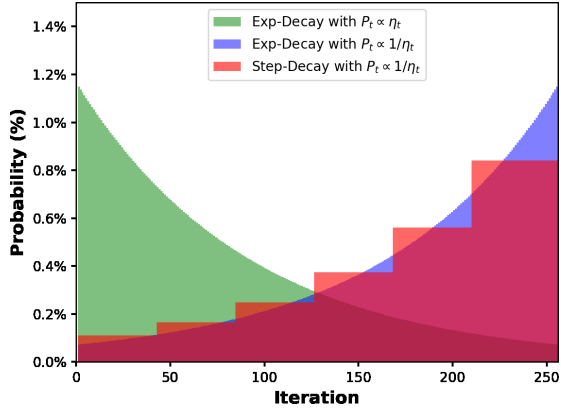}}
\caption{The probabilistic distribution of the output}
\label{fig:probability}
\end{center}
\vskip -0.1in
\end{figure}

  \iffalse
\textcolor{red}{
By setting $\beta = \sqrt{T}$ which means that the step-size reaches $\mathcal{O}(1/\sqrt{T})$ after $T$ iterates, we can see that exponential step-size yields the rate of $\mathcal{O}(\ln T/\sqrt{T})$. 
If $A = \mathcal{O}(1)$ in Theorem \ref{thm:nonconvex:1}, for sufficient large $T$, the first term $\mathcal{O}(\ln T/(\sqrt{T}-1))$ will dominate the upper bound of \eqref{equ:nonconvex:thm1}. So the constant of  $\mathcal{O}(\ln T/(\sqrt{T}-1))$ in Theorem \ref{thm:nonconvex:1} is smaller than that of \eqref{equ:exp:1} in Theorem \ref{thm:exp:nonconvex}. That is to say, step decay step-size theoretically can obtain a better upper bound than exponential step-size.}

\textcolor{blue}{Xiaoyu: Maybe we can delete the above paragraph after "If A = ..... "}
\fi

\subsection{Improved Convergence for the $1/\sqrt{t}$ Step-Size}
\label{sec:sqrt}

\replaced{The next theorem demonstrates how the idea of using the output distribution $P_t\propto 1/\eta_t$ instead of $P_t\propto \eta_t$ can improve standard convergence bounds for $1/\sqrt{t}$ step-sizes. }{In this subsection, we employ the idea that the probability distribution $P_t \propto 1/\eta_t$ to the $1/\sqrt{t}$ diminishing step-size.}

\iffalse 
Following the previous work by \citet{ghadimi2013stochastic}: 
for SGD with the diminishing $1/\sqrt{t}$ step-size, we have:
\begin{proposition}\label{pro:1sqrt}
Suppose that $f$ is $L$-smooth on $\R^d$ and that the stochastic gradient oracle is variance-bounded. If the step-size $\eta_t = \eta_0/\sqrt{t} \leq 1/L$, then
\begin{align*}
\E[\left\|\nabla f(\hat{x}_T)\right\|^2] \leq  \frac{f(x_1)-f^{\ast}}{\eta_0(\sqrt{T}-1)} + \frac{LV^2\eta_0(\ln T +1 )}{2(\sqrt{T}-1)},
\end{align*}
where $\hat{x}_T$ is randomly chosen from $\left\lbrace x_t\right\rbrace_{t=1}^{T}$ with probability $P_t =\frac{\eta_t }{\sum_{t=1}^{T}\eta_t}$.
\end{proposition}

Note that for SGD with $1/\sqrt{t}$ step-size can achieve the $\mathcal{O}(\ln T/\sqrt{T})$ rate, however, this is not optimal compared to the result by setting $\eta_t = \eta_0/\sqrt{T}$~\citep{ghadimi2013stochastic}. Moreover, the probability distribution used in Proposition \ref{pro:1sqrt} is not good which has been explained in Section \ref{sec:exp}.

If we employ the probability $P_t \propto 1/\eta_t$, we can see that the $\ln T$ in Proposition \ref{pro:1sqrt} can be removed.
\fi

\begin{theorem}\label{thm:sqrt}
Suppose that the objective function is $L$-smooth and upper bounded by $f_{\max}$,  and that the stochastic gradient oracle is variance-bounded by $V^2$. 
 If $\eta_t = \eta_0/\sqrt{t} \leq 1/L$, then 
 %applying the probability distribution $P_t \propto 1/\eta_t$, we have
\begin{align*}
\E[\left\|\nabla f(\hat{x}_T) \right\|^2] \leq \left(\frac{3f_{\max}}{\eta_0} + \frac{3LV^2\eta_0}{2}\right)\cdot \frac{1}{\sqrt{T}}
\end{align*}
where $\hat{x}_T$ is randomly drawn from the sequence $\left\lbrace x_t\right\rbrace_{t=1}^{T}$ with probabilities $P_t = \frac{1/\eta_t}{\sum_{t=1}^{T}1/\eta_t}$.
\end{theorem}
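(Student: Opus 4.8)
The plan is to combine the standard one-step descent inequality for $L$-smooth SGD with a reweighting that matches the non-uniform output rule $P_t\propto 1/\eta_t$, and then to control the resulting telescoping-type sum by summation by parts. First I would establish the per-iteration bound: from $x_{t+1}=x_t-\eta_t\hat g_t$ and $L$-smoothness, take conditional expectations, use unbiasedness together with $\E[\|\hat g_t-\nabla f(x_t)\|^2]\le V^2$, and use $\eta_t\le 1/L$ so that $\eta_t-L\eta_t^2/2\ge \eta_t/2$. After taking full expectations this gives
\begin{align*}
\frac{\eta_t}{2}\,\E\!\left[\|\nabla f(x_t)\|^2\right]\;\le\;\E[f(x_t)]-\E[f(x_{t+1})]+\frac{L\eta_t^2V^2}{2}.
\end{align*}

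Next, instead of summing this directly (which produces $P_t\propto\eta_t$ and an extra $\ln T$ factor, as in Proposition~\ref{pro:1sqrt}), I would divide through by $\eta_t^2$ and sum over $t=1,\dots,T$, obtaining
\begin{align*}
\frac12\sum_{t=1}^T\frac{1}{\eta_t}\,\E\!\left[\|\nabla f(x_t)\|^2\right]\;\le\;\sum_{t=1}^T\frac{\E[f(x_t)]-\E[f(x_{t+1})]}{\eta_t^2}+\frac{LV^2T}{2}.
\end{align*}
Since $\hat x_T$ is drawn with $P_t=(1/\eta_t)\big/\sum_{s=1}^T(1/\eta_s)$, the left-hand side equals $\tfrac12\big(\sum_{s=1}^T 1/\eta_s\big)\,\E[\|\nabla f(\hat x_T)\|^2]$. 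It then remains to lower-bound the normalizer and to upper-bound the weighted difference sum. The normalizer is elementary: $\sum_{t=1}^T 1/\eta_t=\eta_0^{-1}\sum_{t=1}^T\sqrt t\ge \eta_0^{-1}\int_0^T\sqrt{x}\,dx=\tfrac23\eta_0^{-1}T^{3/2}$.

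The one genuinely delicate step is the weighted difference sum $\sum_{t=1}^T (\E[f(x_t)]-\E[f(x_{t+1})])/\eta_t^2$: it does not telescope, because the weights $1/\eta_t^2$ vary with $t$. Here I would apply summation by parts (Abel summation) with weights $w_t=1/\eta_t^2=t/\eta_0^2$. The crucial observation — and the reason the $1/\sqrt t$ schedule is special — is that $w_t$ is \emph{affine} in $t$, so $w_t-w_{t-1}=1/\eta_0^2$ is constant; summation by parts therefore collapses the sum to $\eta_0^{-2}\sum_{t=1}^T\E[f(x_t)]-(T/\eta_0^2)\,\E[f(x_{T+1})]$, which is at most $Tf_{\max}/\eta_0^2$ after invoking $f\le f_{\max}$ and the assumed lower boundedness of $f$. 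Combining the three estimates yields
\begin{align*}
\E\!\left[\|\nabla f(\hat x_T)\|^2\right]\;\le\;\frac{2}{\tfrac23\eta_0^{-1}T^{3/2}}\left(\frac{Tf_{\max}}{\eta_0^2}+\frac{LV^2T}{2}\right)=\left(\frac{3f_{\max}}{\eta_0}+\frac{3LV^2\eta_0}{2}\right)\frac{1}{\sqrt T},
\end{align*}
which is the claimed bound.

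I expect the summation-by-parts step to be the main obstacle, since it is the place where the argument genuinely departs from the classical one: it is exactly the affine-in-$t$ structure of $w_t=1/\eta_t^2$ that turns a nontrivially weighted sum of function-value differences into an \emph{unweighted} running average of $\E[f(x_t)]$, which lower and upper boundedness of $f$ control directly — without the harmonic-sum $\ln T$ penalty that the naive $P_t\propto\eta_t$ analysis incurs. With any schedule other than $1/\sqrt t$, this cancellation fails and one is left with a strictly harder sum.
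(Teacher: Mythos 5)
Your proposal is correct and follows essentially the same route as the paper's proof: the per-iteration descent lemma, division by $\eta_t^2$, the observation that $1/\eta_t^2 = t/\eta_0^2$ is affine in $t$ so the weighted difference sum collapses to $\frac{1}{\eta_0^2}\sum_{t}\E[f(x_t)] \le T f_{\max}/\eta_0^2$ (the paper writes this collapse as $\sum_{t\ge 2}(t-(t-1))\E[f(x_t)]$ rather than naming it Abel summation), and the integral bound $\sum_{t=1}^T 1/\eta_t \ge \tfrac{2}{3\eta_0}T^{3/2}$. The only shared (and equally implicit) caveat in both arguments is that discarding the boundary term $-(T/\eta_0^2)\E[f(x_{T+1})]$ requires $f\ge 0$ rather than mere lower-boundedness, a normalization both you and the paper take for granted.
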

Theorem~\ref{thm:sqrt} establishes an $\mathcal{O}(1/\sqrt{T})$ convergence rate of SGD with $1/\sqrt{t}$ step-sizes. This improves the best known convergence bounds $\mathcal{O}(\ln T/\sqrt{T})$ for the algorithm, see Proposition~\ref{pro:1sqrt} or \citet{ghadimi2013stochastic}.

\section{Convergence for General Convex Problems}
\label{sec:convex}

We now establish the first convergence rate results for the step decay step-size in the general convex setting. More specifically, we consider a possibly non-differentiable convex objective function on a closed and convex constraint set $\mathcal{X} \subseteq \R^d$. For this problem class, we analyze the projected SGD with step decay step-size detailed in Algorithm ~\ref{alg:convex}. 
\begin{algorithm}[ht]
	\caption{Projected SGD with step decay step-size on convex case}\label{alg:convex}
	\begin{algorithmic}		
		\STATE {\bfseries Input:} initial point $x_1^1$, initial step-size $\eta_0$, decay factor $\alpha > 1$, the number of iteration $T$, inner-loop size $S$ and outer-loop size $N = T/S$
		\FOR{ $t =  1: N$ }
		\STATE $\eta_t = \eta_0/\alpha^{t-1}$
		\FOR{ $i = 1: S$}
		\STATE Query a stochastic gradient oracle at $x_i^t$ to get a random vector $\hat{g}_i^t$ such that $\E[\hat{g}_i^t] = g_i^t$ where $g_i^t \in \partial f(x_i^t)$
		\STATE $x_{i+1}^t =\Pi_{\mathcal{X}}(x_i^t - \eta_t  \hat{g}_i^t)$, where $\Pi_{\mathcal{X}}$ is the projection operator on $\mathcal{X}$
		\ENDFOR
		\STATE $x_1^{t+1} = x_{S+1}^{t}$
		\ENDFOR
 		\STATE {\bf Return:} $x_{S+1}^{N}$

\end{algorithmic}
\end{algorithm}

 \iffalse
\begin{assumption}\label{assump:convex}
\begin{enumerate}[label=\textbf{(\arabic*)}]
    \item[]
    \item\label{assump:convex:1} The objective function $f$ is convex on $\mathcal{X}$ if $f(y) \geq f(x) + \left\langle g, y-x \right\rangle $  for any $g \in \partial f(x)$ and $x,y\in \mathcal{X}$.
    \item\label{assump:convex:2} The distance of any point $x, y \in \mathcal{X}$ is bounded, that is there exists a constant $D > 0$ such that $\sup_{x, y \in \mathcal{X}}\left\|x- y\right\|^2 \leq D^2$. 
\end{enumerate}
\end{assumption}
\fi
We have the following convergence guarantee:
%obtain the following convergence guarantee %for Algorithm 2.
\begin{theorem}\label{thm:convex:lastloop}
 Suppose that the objective function $f$ is convex on $\mathcal{X}$ and \added{$\sup_{x,y\in \mathcal{X}}\left\|x-y\right\|^2 \leq D^2$}. The stochastic gradient oracle is bounded \added{by $G^2$}. \added{If we run Algorithm \ref{alg:convex} with $T > 1$, $S = 2T/\log_{\alpha} T$, $x_1^1 \in \mathcal{X}$}, then we have 
\begin{align*}
\frac{1}{S}\sum_{i=1}^{S} \E[f(x_i^{N})] - f^{\ast} \leq A_2 \frac{\ln T}{\sqrt{T}} + \frac{B_2}{\sqrt{T}},
\end{align*}
where $A_2= D^2/(4\eta_0\alpha\ln \alpha)$ and $B_2= G^2\eta_0\alpha/2$. 
Moreover, we have the following bound on the final iterate:
\begin{align*}
 \E[f(x_{S}^{N})] - f^{\ast} & \leq  \left(A_2 + B_2 \right)\frac{\ln T }{\sqrt{T}} 
 + \frac{(2+\ln 2)B_2}{\sqrt{T}}.
\end{align*}
\end{theorem}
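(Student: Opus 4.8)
The plan is to analyze the last outer loop ($t=N$) of Algorithm~\ref{alg:convex} as a run of constant-step-size projected SGD with step $\eta_N=\eta_0/\alpha^{N-1}$, and to exploit the parameter choices $\alpha^{N}=\sqrt{T}$ and $S=2T/\log_\alpha T$ to turn the generic SGD estimates into the stated rates. Write $w_i:=x_i^{N}$, $\eta:=\eta_N$, let $\hat g_i$ be the stochastic (sub)gradient at $w_i$ with $\E[\hat g_i\mid\mathcal F_i]=g_i\in\partial f(w_i)$ for the natural filtration $\mathcal F_i$, and set $z_i:=\E[f(w_i)-f^{\ast}]$, $d_i:=\E\|w_i-x^{\ast}\|^2$. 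For the first bound I would start from the pathwise inequality $\|w_{i+1}-u\|^2\le\|w_i-u\|^2-2\eta\langle\hat g_i,w_i-u\rangle+\eta^2\|\hat g_i\|^2$, valid for every fixed $u\in\mathcal X$ by nonexpansiveness of $\Pi_{\mathcal X}$. Taking $u=x^{\ast}$, taking expectations, and using convexity ($\langle g_i,w_i-x^{\ast}\rangle\ge f(w_i)-f^{\ast}$) together with $\E\|\hat g_i\|^2\le G^2$ gives $z_i\le(d_i-d_{i+1})/(2\eta)+\eta G^2/2$; summing over $i=1,\dots,S$, telescoping, dropping $-d_{S+1}\le0$, and bounding $d_1=\E\|x_1^{N}-x^{\ast}\|^2\le D^2$ yields $\tfrac1S\sum_{i=1}^S z_i\le D^2/(2\eta_N S)+\eta_N G^2/2$. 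Substituting $\eta_N=\eta_0\alpha/\sqrt{T}$ and $S=2T\ln\alpha/\ln T$ converts this into $A_2\ln T/\sqrt{T}+B_2/\sqrt{T}$.

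For the last iterate I would run a suffix-averaging argument in the spirit of Shamir--Zhang, but within the single last phase where the step-size is constant. Define $\Phi_k:=\frac1{k+1}\sum_{i=S-k}^{S}z_i$ for $k=0,\dots,S-1$, so $\Phi_0=z_S$ and $\Phi_{S-1}=\frac1S\sum_{i=1}^S z_i$ (the quantity already bounded). A one-line computation gives $\Phi_{k-1}-\Phi_k=(\Phi_{k-1}-z_{S-k})/(k+1)$, so it remains to control the ``drift'' $\Phi_{k-1}-z_{S-k}$, i.e.\ how much the earlier iterate $w_{S-k}$ overshoots the average of the later ones. For this I would apply the same pathwise inequality with the random-but-fixed point $u=w_{S-k}$ over the steps $i=S-k+1,\dots,S$: telescoping $\|w_i-w_{S-k}\|^2$, using $\|w_{S-k+1}-w_{S-k}\|\le\eta\|\hat g_{S-k}\|$, taking expectations, and invoking convexity at each $w_i$ gives $\sum_{i=S-k+1}^{S}(z_i-z_{S-k})\le(k+1)\eta_N G^2/2$, whence $\Phi_{k-1}-z_{S-k}\le(k+1)\eta_N G^2/(2k)$ and $\Phi_{k-1}-\Phi_k\le\eta_N G^2/(2k)$.

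Summing the telescoping identity $z_S=\Phi_{S-1}+\sum_{k=1}^{S-1}(\Phi_{k-1}-\Phi_k)$ and using $\sum_{k=1}^{S-1}1/k\le1+\ln S$ gives $z_S\le\tfrac1S\sum_{i=1}^S z_i+\tfrac{\eta_N G^2}{2}(1+\ln S)$. Plugging in the first bound and noting that the integrality assumptions force $\log_\alpha T=2N\ge2$, hence $S\le T$ and $\ln S\le\ln 2+\ln T$, yields $z_S\le\frac{D^2}{2\eta_N S}+\frac{\eta_N G^2}{2}(2+\ln 2+\ln T)$, which is exactly $(A_2+B_2)\ln T/\sqrt{T}+(2+\ln 2)B_2/\sqrt{T}$ after the same substitution $\eta_N=\eta_0\alpha/\sqrt{T}$, $S=2T\ln\alpha/\ln T$.

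The per-step SGD inequality and the closing arithmetic are routine; the step that carries the weight is the drift estimate $\sum_{i=S-k+1}^{S}(z_i-z_{S-k})\le(k+1)\eta_N G^2/2$, since it is what legitimizes using the last-phase average as a proxy for the last iterate at only a logarithmic cost — and it works cleanly precisely because the step-size is constant throughout the last phase, so no step-size ratios enter the telescoping of $\|w_i-w_{S-k}\|^2$. A minor point to dispatch is that $f^{\ast}$ may not be attained; one replaces $x^{\ast}$ by an $\varepsilon$-minimizer and lets $\varepsilon\downarrow0$, which leaves every bound unchanged.
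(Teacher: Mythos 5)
Your argument is correct and follows essentially the same route as the paper: a single-phase telescoping of the standard projected-SGD inequality at $t=N$ with $\eta_N=\eta_0\alpha/\sqrt{T}$ for the average bound, followed by the Shamir--Zhang suffix-averaging recursion (your $\Phi_k$ is the paper's $W_{k+1}^{N}$ shifted by $f^{\ast}$, and your drift estimate with $u=w_{S-k}$ is the paper's inequality \eqref{equ:convex:thm1:1}). The only differences are cosmetic bookkeeping, plus your explicit remarks on the measurability of $w_{S-k}$ and on non-attainment of $f^{\ast}$, which the paper leaves implicit.
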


Theorem \ref{thm:convex:lastloop} establishes an $\mathcal{O}(\ln T/\sqrt{T})$ convergence rate for both the the average objective function value and the objective function value at the final iterate. 
This convergence rate is comparable to the results obtained for other diminishing step-sizes such as $\eta_t=\eta_0/\sqrt{t}$ under the same assumptions~\citep{Shamir-Zhang2013}.

\section{Convergence for Strongly Convex Problems}\label{sec:sc}

We will now investigate the convergence of the step decay step-sizes in the strongly convex case. The algorithm is the same as for general convex problems (Algorithm \ref{alg:convex}). 
%We derive the theoretical results with $S = T/\log_{\alpha} T$, and $N = \log_{\alpha} T$. 

%The only difference is that the inner-loop size $S = T/\log_{\alpha} T$ which is the half of Algorithm \ref{alg:convex}. Accordingly, the outer-loop size $N = \log_{\alpha} T$ is the double (see Algorithm \ref{alg:convex}). Given the ending time $T$ when the iterate is terminated, we can see that at final phase $N$, the step-size will reduce to $\eta_N = \eta_0\alpha/T$. 

% \begin{algorithm}[ht]
% 	\caption{Projected SGD with step decay step-size on strongly convex case}\label{alg:stronglyconvx}
% 	\begin{algorithmic}		
% 		\STATE \textbf{Input}: initial point $x_1^1 \in \mathcal{X}$, initial step-size $\eta_0$, decay factor $\alpha > 1$, the number of iteration $T$, inner-loop size $S$, outer-loop size $N= T/S$
% 		\FOR{$t =  1: N$ }
% 		\STATE $\eta_t = \eta_0/\alpha^{t-1}$
% 		\FOR{$i = 1: S$}
% 		\STATE Query a stochastic oracle at $x_i^t$ to get  $\hat{g}_i^t$ such that $\E[\hat{g}_i^t] = g_i^t$ where $g_i^t \in \partial f(x_i^t)$ \\\vspace{0.01in}
% 		\STATE $x_{i+1}^t = \Pi_{\mathcal{X}}(x_i^t - \eta_t  \hat{g}_i^t)$, where $\Pi_{\mathcal{X}}$ is the projection operator in $\mathcal{X}$
% 		\ENDFOR
% 		\STATE $x_{1}^{t+1} = x_{S+1}^t$
% 		\ENDFOR
% 		\STATE {\bf Return: $x_{S+1}^{N}$} 
% 		\textcolor{red}{Xiaoyu: Maybe we can delete this algorithm which is the same as Algorithm 2 with free S but with a different choice for $S$}
% 	\end{algorithmic}
% \end{algorithm}

\subsection{Strongly Convex and $L$-Smooth Functions}
We first consider the case when the objective function $f$ is strongly convex and $L$-smooth (with respect to $x^{\ast}$). 

%Before going into the main results, we shall give some assumptions needed in this section.

\iffalse
\begin{assumption}\label{assump:strongly-case}
\begin{enumerate}[label=\textbf{(\arabic*)}]
\item[]
\item \label{assump:strongly-convex} A function $f$ is $\mu$-strongly convex on $\mathcal{X} \subseteq \R^d$ if $
f(y) \geq f(x) + \left\langle g, y-x\right\rangle + \frac{\mu}{2}\left\|y-x\right\|^2, \, \forall  x, y\in \mathcal{X}, g \in \partial f(x),$
with $\mu > 0$.
\item\label{assump:smooth} We call $f$ is $L$-smooth with respect to $x^{\ast}$ if $
f(x) - f(x^{\ast}) \leq \frac{L}{2}\left\|x-x^{\ast}\right\|^2, \forall x \in \mathcal{X},$
with $L > 0$.
\end{enumerate}
\end{assumption}
\fi

% The smooth property with respect to $x^{\ast}$ has been considered by \citet{Rakhlin-Shamir-Sridharan2011}.
%\added{If we run Algorithm \ref{alg:convex} with $T \geq 1$, $S = 2T/\log_{\alpha} T$, $x_1^1 \in \mathcal{X}$}, then we have 

\begin{theorem}\label{thm:sc}
Assume that the objective function $f$ \replaced{is $\mu$-strongly convex on $\mathcal{X}$ }{satisfies Assumptions \ref{assump:strongly-case}\ref{assump:strongly-convex}} and the stochastic gradient oracle is bounded by $G^2$. \replaced{If we run Algorithm~\ref{alg:convex} with $T > 1$, $S = T/\log_{\alpha} T$, $x_1^1 \in \mathcal{X}$,  and $\eta_0 < 1/(2\mu) $, then we have}{Algorithm \ref{alg:stronglyconvx} is initialized by any point of $\mathcal{X}$, then if $\eta_0 < 1/(2\mu) $, we have}
\begin{align*}
  \E \left\|x_{S+1}^{N}\! - x^{\ast}\right\|^2   \leq &
\frac{R}{\exp\left(A_3 \frac{T-1}{\ln T}\right)}  +
  \frac{\alpha G^2 \exp\left(\frac{A_3}{\ln T}\right)  }{2\mu A_3}\frac{\ln T}{T}
%  
% & \frac{\left\|x_1^1 - x^{\ast} \right\|^2}{\exp\left( \frac{2\mu\eta_0\alpha\ln\alpha}{(\alpha-1)} \cdot\frac{T-1}{\ln T}\right)} \\
%  & \vspace{0.4em} + \frac{G^2(\alpha-1)\exp\left(\frac{2\mu\eta_0\alpha\ln \alpha}{(\alpha-1)\ln T}\right)}{4\mu^2\ln \alpha} \cdot \frac{\ln T}{T}.   
\end{align*}
where $A_3=2\mu \eta_0 \alpha \ln \alpha /(\alpha-1)$ and $R=\left\|x_1^1 - x^{\ast} \right\|^2 $. Further, if the objective function $f$ is $L$-smooth with respect to $x^{\ast}$ then we can bound the objective function as follows
\begin{align*}
\E[f(x_{S+1}^{N}) - f(x^{\ast})] 
& \leq  \frac{L}{2} \E \left\|x_{S+1}^{N} - x^{\ast}\right\|^2 .
\end{align*}
\end{theorem}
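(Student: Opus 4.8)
The plan is to reduce the theorem to a scalar recursion for $a_t := \E\left\|x_1^t-x^{\ast}\right\|^2$, the expected squared distance to $x^{\ast}$ at the start of phase $t$, solve that recursion, and then obtain the function-value bound from $L$-smoothness at $x^{\ast}$. For the one-step estimate inside a phase $t$, I would use that $\Pi_{\mathcal{X}}$ is nonexpansive and $x^{\ast}\in\mathcal{X}$ to write $\left\|x_{i+1}^t-x^{\ast}\right\|^2\le\left\|x_i^t-\eta_t\hat g_i^t-x^{\ast}\right\|^2$, then expand, take the conditional expectation given $x_i^t$, and invoke $\E[\hat g_i^t]=g_i^t\in\partial f(x_i^t)$, the strong-convexity inequality $\langle g_i^t,x_i^t-x^{\ast}\rangle\ge f(x_i^t)-f^{\ast}+\frac{\mu}{2}\left\|x_i^t-x^{\ast}\right\|^2\ge\mu\left\|x_i^t-x^{\ast}\right\|^2$, and $\E\|\hat g_i^t\|^2\le G^2$. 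This yields $\E\left\|x_{i+1}^t-x^{\ast}\right\|^2\le q_t\,\E\left\|x_i^t-x^{\ast}\right\|^2+\eta_t^2 G^2$ with $q_t:=1-2\mu\eta_t\in(0,1)$, since $\eta_t\le\eta_0<1/(2\mu)$. Compounding this over the $S$ inner iterations of the phase and bounding $\sum_{j=0}^{S-1}q_t^{j}\le 1/(1-q_t)=1/(2\mu\eta_t)$ gives the per-phase recursion $a_{t+1}\le q_t^{S}a_t+\eta_t G^2/(2\mu)$.

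Unrolling this recursion over the $N=\log_\alpha T$ phases and using $1-x\le e^{-x}$ gives
\[
\E\left\|x_{S+1}^{N}-x^{\ast}\right\|^2=a_{N+1}\le R\prod_{t=1}^{N}e^{-2\mu\eta_t S}+\frac{G^2}{2\mu}\sum_{t=1}^{N}\eta_t\prod_{s=t+1}^{N}e^{-2\mu\eta_s S}.
\]
This is where the choice $S=T/\log_\alpha T$ is used: together with $\eta_s=\eta_0\alpha^{1-s}$ and the identity $\alpha^{-N}=1/T$, a short computation gives $2\mu S\sum_{s=t+1}^{N}\eta_s=\frac{A_3}{\ln T}\bigl(T\alpha^{-t}-1\bigr)$, where $A_3=2\mu\eta_0\alpha\ln\alpha/(\alpha-1)$. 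Setting $t=0$ identifies the first product as $\exp(-A_3(T-1)/\ln T)$ and produces the first term $R/\exp(A_3(T-1)/\ln T)$.

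The remaining task, and the main obstacle, is the noise sum. Writing $v_t:=A_3T\alpha^{-t}/\ln T$, the inner product in the sum equals $e^{A_3/\ln T}e^{-v_t}$, and since $\eta_t=\frac{\eta_0\alpha\ln T}{A_3T}v_t$, the noise sum is a fixed multiple of $e^{A_3/\ln T}\sum_{t=1}^{N}v_t e^{-v_t}$. The values $v_1,\dots,v_N$ form a geometric sequence with ratio $1/\alpha$, descending from a large $v_1$ to $v_N=A_3/\ln T$, and I would bound $\sum_t v_t e^{-v_t}$ by splitting at $v_t=1$: for the indices with $v_t\le 1$ one bounds $v_te^{-v_t}\le v_t$ and sums the geometric tail; for the indices with $v_t>1$ the terms are decreasing in $v_t$ and, as $t$ decreases, shrink by a factor at most $\alpha e^{-(\alpha-1)}<1$ (valid because $\ln\alpha<\alpha-1$ for $\alpha>1$), so they sum to a bounded quantity as well. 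Collecting these estimates and substituting $S=T\ln\alpha/\ln T$ yields the second term $\frac{\alpha G^2\exp(A_3/\ln T)}{2\mu A_3}\cdot\frac{\ln T}{T}$. The function-value bound is then immediate: by the definition of $L$-smoothness with respect to $x^{\ast}$, $f(x_{S+1}^{N})-f(x^{\ast})\le\frac{L}{2}\left\|x_{S+1}^{N}-x^{\ast}\right\|^2$, and taking expectations gives the stated inequality. The only genuinely delicate point is the noise-sum estimate, where the geometric decay of the step-sizes, the phase length $S$, and the accumulated variance must be balanced so that the bound collapses to precisely the $\mathcal{O}(\ln T/T)$ rate; everything else is careful but routine bookkeeping.
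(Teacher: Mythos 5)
Your proposal follows essentially the same route as the paper's proof: the same one-step descent inequality from strong convexity and nonexpansiveness of $\Pi_{\mathcal{X}}$, the same per-phase contraction $a_{t+1}\le(1-2\mu\eta_t)^S a_t+\eta_t G^2/(2\mu)$, the same unrolling via $1-x\le e^{-x}$ with the identical computation of $2\mu S\sum_{l>t}\eta_l$, and the same split of the noise sum at $v_t=1$ (which is exactly the paper's threshold $t^{\ast}$). The only cosmetic difference is that the paper bounds the two pieces of $\sum_t v_te^{-v_t}$ by integral comparison ($\int_1^{\infty}xe^{-x}dx$ and $\int_0^1 xe^{-x}dx$) rather than your geometric-series estimates; both are routine and yield the stated $\mathcal{O}(\ln T/T)$ rate.
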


The theorem provides an $\mathcal{O}(\ln T/T)$ theoretical guarantee for the last iterate under step decay step-size.\footnote{Note that $\exp(A_3/\ln T)\leq \exp(A_3/\ln 2)$ for all $T\geq 2$ and $\exp(A_3/\ln T)$ converges to $1$ as $T$ goes to infinity. } This bound matches the convergence rate of \citet{ge2019step} for strongly convex least squares \replaced{problems}{regression}. \replaced{Therefore, Theorem~\ref{thm:sc} can be considered a generalization of the result in \citet{ge2019step} to general smooth and strongly convex problems.}{
We extend the result of \citet{ge2019step} for step decay step-size to the more general case under strongly convexity and smoothness assumptions.}
\deleted{Note that the condition for initial step-size is $\eta_0 \leq 1/(2\mu)$ which is similar to the polynomial step-size for obtaining optimal rates \citep{Moulines-Bach2011}. This condition is looser than that of \citep{li2020exponential} for exponential decay step-size.
Moreover, the convergence rate derived in Theorem \ref{thm:sc} is better than that of \citet{li2020exponential} which is  $\mathcal{O}(\ln ^2T/T)$(see theorem 1).}

\replaced{Under some step-sizes, e.g. $\eta_t=1/t$ , it is  possible to get an $\mathcal{O}(1/T)$ convergence rate for SGD for smooth and strongly convex problems~\cite{Moulines-Bach2011,Rakhlin-Shamir-Sridharan2011, hazan2014beyond}. However, our next result shows that the rate in Theorem~\ref{thm:sc} is tight for Algorithm \ref{alg:convex}.}{
Next, we give one dimension function, which is 1-strongly convex and 1-smooth on a bounded region $\mathcal{X}$, to show the high probability lower bound of Algorithm \ref{alg:stronglyconvx} in strongly convex case. 
}

\begin{theorem}\label{thm:lower-bound}
 Consider Algorithm~\ref{alg:convex} with $S = T/\log_{\alpha} T$ and $x_1^1=0$. For any $T \in \N^{+}$ and $\delta\in(0,1)$, there exists a  function $\tilde{f}_T :\mathcal{X}\rightarrow \R$, where $\mathcal{X}=[-4,4]$, that is both 1-strongly convex and 1-smooth such that
 \begin{align*}
\tilde{f}_T(x_{S+1}^{N}) - \tilde{f}_T(x^{\ast}) \geq  \frac{ \ln(1/\delta)}{9\exp(2)\ln\alpha}\cdot \frac{\ln T}{T}
\end{align*}
 with probability at least $\delta$, where $x^{\ast} = \min_{x \in \mathcal{X}} \tilde{f}_T(x)$.

\iffalse
For any $T \in \N^{+}$, there exists a \deleted{convex} function $\tilde{f}_T :\mathcal{X} \rightarrow \R$, where $\mathcal{X}=[-4, 4]$, \replaced{that is both}{such that $\tilde{f}_T$ is} 1-strongly convex and 1-smooth. Consider running the Algorithm \ref{alg:stronglyconvx} from the initial point $x_1^1=0$. Let $x^{\ast} = \min_{x \in \mathcal{X}} \tilde{f}_T(x)$. Then for any given $0 <\delta < 1$, we have
\begin{align*}
\tilde{f}_T(x_{S+1}^{N}) - \tilde{f}_T(x^{\ast}) \geq  \frac{ \ln(1/\delta)}{9\exp(2)\ln\alpha}\cdot \frac{\ln T}{T}
\end{align*}
with probability at least $\delta$.
\fi
\end{theorem}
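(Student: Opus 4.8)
The plan is to construct a hard instance in one dimension where the stochastic gradient carries a carefully designed noise that, with constant probability, pushes the first inner-loop iterate of the final phase away from the optimum and then leaves it stuck there because the step-sizes in the last phase are too small to recover. I would take $\tilde f_T(x) = \tfrac12 x^2$ on $\mathcal{X}=[-4,4]$, so that $x^\ast=0$, $\mu=1$, $L=1$, and the deterministic gradient at $x$ is simply $x$. The stochastic oracle is chosen so that $\hat g = x + \xi$ where $\xi$ is a zero-mean noise with $\E[\hat g^2]\le G^2$ (matching the boundedness assumption used in Theorem~\ref{thm:sc}), but with a heavy enough tail that with probability at least $\delta$ a single query returns a gradient estimate of magnitude on the order of $\sqrt{\ln(1/\delta)}$ times the current step-size scaling. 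The key structural fact to exploit is that Algorithm~\ref{alg:convex} with $S=T/\log_\alpha T$ uses step-size $\eta_N=\eta_0\alpha/\sqrt{T}\asymp 1/\sqrt T$ throughout the entire final phase of length $S\asymp T/\ln T$.

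The main steps, in order, would be: (i) Track the last phase only. Condition on the iterate $x_1^N$ entering the final phase; by the SGD recursion with fixed $\eta=\eta_N$ on the quadratic, $x_{i+1}^N=(1-\eta_N)x_i^N - \eta_N\xi_i^N$. (ii) Lower-bound the variance of $x_{S+1}^N$ from below by the contribution of the \emph{last} noise injection $\xi_S^N$, which enters undamped: $x_{S+1}^N = (1-\eta_N)x_S^N-\eta_N\xi_S^N$, so $|x_{S+1}^N|\ge \eta_N|\xi_S^N| - |1-\eta_N||x_S^N|$; alternatively, and more cleanly, expand the whole phase and isolate the geometric-sum tail. (iii) Design $\xi_S^N$ (or a designated noise coordinate) to be, say, $\pm c/\eta_N \cdot \sqrt{\ln(1/\delta)}$ with probability $\delta$ and a small compensating value otherwise, keeping $\E[\xi]=0$ and $\E[\hat g^2]$ bounded — here one must check that $\eta_N^2 \cdot (1/\eta_N)^2\ln(1/\delta) = \ln(1/\delta)$ is an $O(1)$-type contribution, consistent with $G^2$. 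This forces $|x_{S+1}^N|^2 \gtrsim \ln(1/\delta)\cdot \eta_N^2 \asymp \ln(1/\delta)/T$ with probability $\ge\delta$. (iv) Convert the distance bound into a function-value bound via $\tilde f_T(x)-\tilde f_T(0)=\tfrac12 x^2$, and then sharpen the constant by exhibiting a sequence of such bad events across the phase (or across phases) so that the $\ln T$ factor appears: the point is that there are $\Theta(T/\ln T)$ independent chances in the final phase but the step-size never shrinks, and a union/independence argument over $\Theta(\ln T)$-scale blocks produces the extra $\ln T$; this is exactly where the $\ln T/T$ rate (rather than $1/T$) is forced, matching the tightness claim for Theorem~\ref{thm:sc}.

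The hardest part will be step (iv): getting the $\ln T$ factor, not just $1/T$, into a \emph{high-probability} (probability-$\delta$) lower bound rather than an in-expectation one. A single bad noise draw only gives $\Omega(1/T)$; to amplify to $\Omega(\ln T/T)$ one must argue that among the many iterations of the last (or last few) phase(s), the accumulated effect of the best of $\Theta(\ln T)$ roughly-independent "kicks" is $\sqrt{\ln T}$ larger in magnitude, which requires a delicate anti-concentration / lower-tail argument on a sum of damped noise terms together with a careful choice of how the probability budget $\delta$ is spent across these blocks so that the final event still has probability at least $\delta$. I would handle this by partitioning the final phase into $\Theta(\ln T)$ consecutive windows whose damping factors $(1-\eta_N)^{\text{length}}$ are bounded below by a constant (each window has length $\Theta(1/\eta_N)=\Theta(\sqrt T)$, and there are $S/\sqrt T = \Theta(\sqrt T/\ln T)$... so in fact one rather looks at the structure on the $1/\eta_N$ scale and uses that $S\eta_N \asymp \sqrt T/\ln T$ is large), then show the event that \emph{some} window delivers a kick of size $\Omega(\sqrt{\ln(1/\delta)})$ while the quadratic has not yet contracted it away occurs with the required probability. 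Matching the precise constant $1/(9e^2\ln\alpha)$ will then be a routine (if fiddly) calculation tracking $\eta_N=\eta_0\alpha/\sqrt T$ and the geometric-series bounds — I would not grind through it here.
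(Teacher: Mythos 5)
There is a genuine gap, and it sits exactly where you flagged the difficulty. First, a setup error: with $S=T/\log_\alpha T$ the algorithm runs $N=\log_\alpha T$ phases, so the final step-size is $\eta_N=\eta_0\alpha/\alpha^{N}=\eta_0\alpha/T$, not $\eta_0\alpha/\sqrt{T}$ (the $1/\sqrt{T}$ value belongs to the convex/nonconvex configuration $S=2T/\log_\alpha T$). This matters because your whole plan is to extract the $\ln T/T$ rate from the last phase, and the last phase simply cannot supply it: the accumulated noise variance there is of order $\eta_N^2\, S\, G^2 \asymp (1/T^2)\cdot(T/\log_\alpha T)=1/(T\log_\alpha T)$, which is a factor $\ln^2 T$ \emph{below} the target. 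No window decomposition, union bound, or ``best of $\Theta(\ln T)$ kicks'' argument within the last phase can recover this, because taking the best of $m$ independent events increases the \emph{probability} of a fixed-size deviation, not its \emph{magnitude}; the $\sqrt{\ln T}$ amplification you need is an anti-concentration statement about a single sum, and the sum available in the last phase has the wrong standard deviation. The paper instead injects Rademacher noise only in the intermediate phase $t^\ast=\log_\alpha T-\log_\alpha\log_\alpha T+1$, chosen so that $\eta_{t^\ast}=\log_\alpha(T)/T$ and $S\eta_{t^\ast}=1$: there the accumulated standard deviation is $\eta_{t^\ast}\sqrt{S}=\sqrt{\log_\alpha T/T}$, and the contraction over the few remaining phases is only a constant factor (roughly $e^{-1}$ per phase), so the $\log_\alpha T/T$ squared deviation survives to $x_{S+1}^N$. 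A lower-tail bound for sums of $\pm1$ variables (Lemma~\ref{lem:proba:1}) with $K=T/\log_\alpha T$ and $c=\sqrt{2\ln(1/\delta)}/3$ then gives the stated constant. Identifying this ``critical phase'' where the step-size is $\Theta(\ln T/T)$ is the one idea the proof cannot do without, and it is absent from your proposal.

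A second, smaller but still fatal issue is your noise design in step (iii): a spike of magnitude $c\sqrt{\ln(1/\delta)}/\eta_N$ occurring with probability $\delta$ has second moment of order $\delta\ln(1/\delta)/\eta_N^2\asymp \delta\ln(1/\delta)\,T^2$, so the oracle is not bounded by any fixed $G^2$ and the constructed instance falls outside the hypotheses of the upper bound (Theorem~\ref{thm:sc}) it is supposed to match; your sanity check computed the contribution to $x^2$, not $\E[\hat g^2]$. The paper avoids this by using bounded noise $z_i^{t^\ast}=X_i^\ast/\nu_i^\ast$ with $|z_i^{t^\ast}|\le e$, and obtains the $\delta$-dependence from the anti-concentration lemma rather than from a heavy tail. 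Your choice of $\tilde f_T=\tfrac12 x^2$ on $[-4,4]$ and the general adversarial-zero-mean-noise strategy are the same as the paper's, but without the correct phase selection and a bounded oracle the argument as outlined does not close.
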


The high probability lower bound $\mathcal{O}(\ln T/T)$ matches the upper bound demonstrated in Theorem \ref{thm:sc}. This \replaced{suggests}{also implies} that the error bound in Theorem \ref{thm:sc} is actually tight for \deleted{step decay schedule in} Algorithm \ref{alg:convex} \added{in the smooth and strongly-convex case}.

\subsection{Strongly Convex and Non-Smooth Functions}
%Convergence Analysis without $L$-smoothness}

%Following Theorem \ref{thm:sc}, if there is not any smooth assumption, we can get the following bound at the last iterate.

For general, not necessarily smooth, strongly convex functions  we have the following result.
\begin{theorem}\label{sec:thm:last-iterate}
Suppose that the objective function $f$ \replaced{is $\mu$-strongly convex on $\mathcal{X}$}{satisfies Assumptions \ref{assump:strongly-case}\ref{assump:strongly-convex}} and the stochastic gradient oracle is bounded by $G^2$. \replaced{If we run Algorithm~\ref{alg:convex} with $T > 1$, $S = T/\log_{\alpha} T$, $x_1^1 \in \mathcal{X}$,  and $\eta_0 < 1/(2\mu) $, then we have}{If $\eta_0 < 1/(2\mu)$, we have}
\begin{align*}
 \E[f(x_{S}^{N}) - f(x^{\ast})] \leq&   \frac{ R \ln T}{A_4 \exp\left(B_4 \frac{T-\alpha}{\ln T}\right)} +C_4\frac{\ln T+2}{T} \\
&+ D_4 \exp\left( \frac{E_4}{\ln T}\right) \frac{\ln^2 T}{T}
\end{align*}
where $A_4= \eta_0 \alpha \ln \alpha$, $B_4=2\mu A_4 /(\alpha-1)$, $C_4=G^2\eta_0\alpha$, $D_4=G^2/(2\mu\ln \alpha B_4)$, $E_4=\alpha B_4$.

\iffalse
\begin{align*}
&  \E[f(x_{S}^{N}) - f(x^{\ast})]
\leq \frac{\left\|x_1^1 - x^{\ast} \right\|^2\cdot \ln T}{\eta_0\alpha\ln\alpha\exp\left(\frac{2\eta_0\mu\alpha\ln\alpha}{\alpha-1} \cdot\frac{T-\alpha}{\ln T}\right)} \\ & + \frac{G^2(\alpha-1)\exp\left(\frac{2\mu\eta_0\alpha^2\ln\alpha}{(\alpha-1)\ln T}\right)}{4\mu^2\eta_0\alpha\ln^2\alpha}\cdot \frac{\ln^2 T}{T}  \vspace{0.4em} + \frac{G^2\eta_0\alpha(\ln(T)+2)}{T}.
\end{align*}
\fi

\end{theorem}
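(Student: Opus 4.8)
\textbf{Proof proposal for Theorem~\ref{sec:thm:last-iterate}.}

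The plan is to combine the (unweighted) last-iterate recursion for strongly convex SGD within each phase with the geometric structure of the step decay schedule, exactly as in the smooth case (Theorem~\ref{thm:sc}), but now carrying the extra $\ln$-type loss that comes from passing from squared distance to function value without smoothness. First I would work inside a single phase $t$, where the step-size is the constant $\eta_t=\eta_0/\alpha^{t-1}$ and there are $S$ inner steps. Using $\mu$-strong convexity and the bounded-gradient assumption $\E\|\hat g\|^2\le G^2$, the standard one-step inequality gives $\E\|x_{i+1}^t-x^\ast\|^2\le(1-2\mu\eta_t)\E\|x_i^t-x^\ast\|^2+\eta_t^2G^2$ whenever $\eta_t<1/(2\mu)$; iterating over $i=1,\dots,S$ yields $\E\|x_{S+1}^t-x^\ast\|^2\le(1-2\mu\eta_t)^S\E\|x_1^t-x^\ast\|^2+\frac{\eta_t G^2}{2\mu}$. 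Since $x_1^{t+1}=x_{S+1}^t$, this chains across phases; the product $\prod_{t=1}^{N}(1-2\mu\eta_t)^S$ is controlled by $\exp(-2\mu S\sum_{t=1}^N\eta_t)$, and $\sum_{t=1}^N\eta_t=\eta_0\frac{1-\alpha^{-N}}{1-\alpha^{-1}}\ge\eta_0\frac{\alpha}{\alpha-1}(1-1/T)$ using $\alpha^{-N}=1/T$ (recall $N=\log_\alpha T$). This produces the first term: $R\exp(-B_4\frac{T-\alpha}{\ln T})$ up to the $\ln T$ factor, after using $S=T/\log_\alpha T$ and bookkeeping the constants into $A_4,B_4$.

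For the accumulated noise, I would unroll the recursion to get $\E\|x_{S+1}^N-x^\ast\|^2\lesssim R\prod_{t=1}^N(1-2\mu\eta_t)^S+\frac{G^2}{2\mu}\sum_{t=1}^N\eta_t\prod_{s=t+1}^N(1-2\mu\eta_s)^S$. The inner products $\prod_{s>t}(1-2\mu\eta_s)^S\le\exp(-2\mu S\sum_{s>t}\eta_s)$ and the tail sum $\sum_{s>t}\eta_s=\eta_0\alpha^{-t}\frac{1-\alpha^{-(N-t)}}{1-\alpha^{-1}}$ can be bounded below by a constant multiple of $\eta_t$; so each surviving term in the noise sum behaves like $\eta_t\exp(-c\,S\,\mu\,\eta_t)$, and since the $\eta_t$ decrease geometrically, the dominant contribution comes from the last phase $t=N$ where $\eta_N=\eta_0\alpha/\sqrt{T}$, giving a noise bound of order $\eta_N G^2/\mu=\mathcal O(1/\sqrt T)$ for the squared distance — but that is not good enough for a $\ln^2T/T$ function-value bound. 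This is the crux: for the non-smooth case one must \emph{not} use the last-iterate squared distance directly, but rather the Shamir--Zhang-style suffix-averaging / telescoping argument applied \emph{within the final phase}. So the real plan is: apply Theorem~\ref{thm:sc}'s distance bound to control $\E\|x_1^N-x^\ast\|^2$ at the \emph{start} of the last phase (this is where the $R\exp(-B_4(T-\alpha)/\ln T)$ and a $\mathcal O(\ln T/T)$ term — giving the $C_4(\ln T+2)/T$ piece — come from, since the last phase starts after $N-1$ phases i.e. $T-\alpha T/\log_\alpha T$ effective progress, hence the $T-\alpha$), and then run the nonsmooth last-iterate analysis of Shamir--Zhang on the $S$ constant-step iterates of the final phase alone, which converts an $\mathcal O(1/(\mu S\eta_N))$-scale distance bound into a function-value bound with an extra $\ln S=\mathcal O(\ln T)$ factor. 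Combining: the final-phase analysis contributes $\frac{G^2}{\mu}\eta_N\ln S+\frac{\|x_1^N-x^\ast\|^2}{\eta_N S}\ln S$-type terms; plugging $\eta_N=\eta_0\alpha/\sqrt T$, $S=T/\log_\alpha T$ and the bound on $\|x_1^N-x^\ast\|^2$ from Theorem~\ref{thm:sc} produces $D_4\exp(E_4/\ln T)\frac{\ln^2T}{T}$ (the $\ln^2$ being one $\ln$ from the residual distance at the start of the last phase times one $\ln$ from the nonsmooth last-iterate loss), the $C_4(\ln T+2)/T$ term, and the exponentially-decaying initial term.

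The main obstacle I anticipate is the careful matching of constants and the verification that the per-phase bounded-variance/bounded-gradient recursion together with the geometric sum $\sum\eta_t$ and its tails reproduce exactly the $A_4,B_4,C_4,D_4,E_4$ as stated; in particular one must verify $\alpha^{-N}=1/T$, $S\sum_{t=1}^N\eta_t\ge\frac{\eta_0\alpha}{\alpha-1}\cdot\frac{T-\alpha}{\log_\alpha T}$ style estimates, and handle the $\exp(A_3/\ln T)$-type prefactors that arise because $\sum_{s>t}\eta_s$ is only \emph{comparable} to (not equal to) $\eta_t$, losing a bounded multiplicative constant of the form $\exp(\text{const}/\ln T)$. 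A secondary subtlety is that the step decay schedule here uses $S=T/\log_\alpha T$ (not $2T/\log_\alpha T$ as in the convex and nonconvex sections), so the number of phases is exactly $\log_\alpha T$ and the factor-of-two bookkeeping differs; I would double-check that the strong-convexity contraction $(1-2\mu\eta_t)^S$ with this $S$ is strong enough to kill the initial error at the claimed rate. Modulo these computations, the proof is a hybrid of Theorem~\ref{thm:sc} (for reaching the last phase) and Shamir--Zhang's nonsmooth last-iterate bound (for the last phase itself), and I would present it in that two-stage form.
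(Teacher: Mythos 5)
Your two-stage plan is exactly the paper's proof: first bound $\E\|x_1^{N}-x^{\ast}\|^2$ at the start of the final phase via the strongly convex contraction from Theorem~\ref{thm:sc} applied over phases $1,\dots,N-1$ (yielding the $\exp(-B_4(T-\alpha)/\ln T)$ term plus an $\mathcal{O}(\ln T/T)$ residual), and then run the Shamir--Zhang suffix-averaging telescoping on the $S$ constant-step iterates of the last phase to pass from distances to the last-iterate function value at the cost of a $\ln S$ factor. Two small slips in your bookkeeping: with $S=T/\log_\alpha T$ and $N=\log_\alpha T$ the last-phase step-size is $\eta_N=\eta_0\alpha/T$, not $\eta_0\alpha/\sqrt{T}$ (the latter belongs to the convex case with $N=\log_\alpha T/2$), and this is essential since $\eta_N G^2\ln S$ must be $\mathcal{O}(\ln T/T)$ to give $C_4(\ln T+2)/T$; and the suffix-averaging argument attaches the $\ln S$ only to the $\eta_N G^2$ noise term, not to the $\E\|x_1^{N}-x^{\ast}\|^2/(\eta_N S)$ term --- the second logarithm in $\ln^2T/T$ comes from $1/(\eta_N S)=\log_\alpha T/(\eta_0\alpha)$ multiplying the $\mathcal{O}(\ln T/T)$ residual distance, so your "$\ln S$ on both terms" version would overshoot to $\ln^3 T$ if taken literally. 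With those corrections your argument reproduces the stated constants.
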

 The theorem shows that even without the smoothness assumption, we can still ensure convergence at the rate $\mathcal{O}(\ln^2 T/T)$. We can improve the rate to $\mathcal{O}(\ln T/ T)$ with an averaging technique, as illustrated next.
%Without smoothness assumption, we can see that the convergence rate for last iterate is  little worse than that of Theorem \ref{thm:sc} but anyway it is still nearly-optimal up to $\ln^2 T$. 
%
%
%Furthermore, this rate can be improved to $\mathcal{O}(\ln T/ T)$ by an average technique.
%
\begin{theorem}\label{thm:nonsmooth:average}
Under the assumptions of Theorem~\ref{sec:thm:last-iterate}, we have %\ref{sec:thm:last-iterate} hold. Then $    \E[f(\hat{x}_{T})] -f(x^{\ast})$ is at most
\begin{align*}
     \E[f(\hat{x}_{T}) -f(x^{\ast})] \leq \frac{A_5 R}{\exp\left( B_4\frac{T}{\ln T}-1 \right)}
     +C_5 \frac{\ln T}{T}
\end{align*}
where 
$\hat{x}_T = { \sum_{t=t^{\ast}}^{N}\eta_t \sum_{i=1}^{S} x_i^t}/({{S}\sum_{t=t^{\ast}}^{{N}}\eta_t}),$
$t^{\ast}:= \max \lbrace 0, \floor{ \log_{\alpha}(\eta_0 \alpha A_5 T/\log_{\alpha} T)}\rbrace$,
% $t^{\ast}:= \max\left\lbrace 0,  \bigfloor{ \log_{\alpha}\left(\frac{2\mu\eta_0\alpha}{\alpha-1}\cdot\frac{T}{\log_{\alpha} T}\right)}\right\rbrace$,
$A_5=2\mu\alpha/(\alpha-1)$, $C_5=\alpha(2+1/(\alpha^2-1))G^2/(2\mu \ln \alpha)$, and $B_4$ is as defined in Theorem~\ref{sec:thm:last-iterate}.
\iffalse
\begin{align*}
\frac{2\mu\alpha}{\alpha-1}\cdot\frac{\left\|x_1^1 - x^{\ast} \right\|^2}{\exp\left(\frac{2\mu\eta_0\alpha}{\alpha-1}\cdot\frac{T}{\log_{\alpha} T}-1\right)}+  \frac{\alpha\left(1+\frac{1}{2(\alpha^2-1)}\right)G^2}{\mu}\cdot \frac{\log_{\alpha} T}{T}.
\end{align*}
where $t^{\ast}:= \max\left\lbrace 0,  \bigfloor{ \log_{\alpha}\left(\frac{2\mu\eta_0\alpha^2}{\alpha-1}\cdot\frac{T}{\log_{\alpha} T}\right)}\right\rbrace$ and $\hat{x}_T = \frac{ \sum_{t=t^{\ast}}^{N}\eta_t \sum_{i=1}^{S} x_i^t}{{S}\sum_{t=t^{\ast}}^{{N}}\eta_t}$.
\fi
\end{theorem}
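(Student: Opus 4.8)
The plan is to build on the last-iterate machinery of Theorem~\ref{sec:thm:last-iterate} by replacing the pointwise bound on $f(x_S^N)$ with a bound on a \emph{tail-averaged} iterate, so that the $\ln^2 T/T$ term collapses to $\ln T/T$. First I would recall the standard one-step recursion for projected SGD on a $\mu$-strongly convex $f$ with bounded stochastic gradients: writing $r_i^t := \E\|x_i^t - x^\ast\|^2$ and using $\langle g_i^t, x_i^t - x^\ast\rangle \ge f(x_i^t) - f^\ast + \tfrac{\mu}{2}\|x_i^t - x^\ast\|^2$ together with $\E\|\hat g_i^t\|^2 \le G^2$, one gets
\begin{align*}
 2\eta_t\bigl(\E[f(x_i^t)] - f^\ast\bigr) \le (1-\mu\eta_t) r_i^t - r_{i+1}^t + \eta_t^2 G^2 .
\end{align*}
Summing this over $i\in[S]$ within a fixed stage $t$, and then over the stages $t = t^\ast, \dots, N$, gives a telescoping-in-$i$ / weighted-telescoping-in-$t$ inequality whose left-hand side is $2\sum_{t=t^\ast}^{N}\eta_t \sum_{i=1}^S (\E[f(x_i^t)]-f^\ast)$. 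Dividing by $2S\sum_{t=t^\ast}^N \eta_t$ and invoking convexity (Jensen) for the weighted average $\hat x_T$ yields $\E[f(\hat x_T)] - f^\ast \le \bigl(\text{boundary term}\bigr) + \bigl(\text{variance term}\bigr)$.

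Next I would estimate the two terms. The boundary term is essentially $r_1^{t^\ast} \cdot \bigl(2S\sum_{t=t^\ast}^N \eta_t\bigr)^{-1}$ up to the contraction factors $\prod(1-\mu\eta_t)$ accumulated over stages $1,\dots,t^\ast-1$; here I use $r_1^{t^\ast} \le R \cdot \exp(-\mu \sum_{\tau<t^\ast} S\eta_\tau)$, which is at most $R$, and the geometric-sum lower bound $\sum_{t=t^\ast}^N S\eta_t \ge S\eta_N \gtrsim (T/\log_\alpha T)\cdot \eta_0\alpha/\sqrt{T}$-type quantities — more precisely the choice of $t^\ast$ is engineered exactly so that $\mu\eta_{t^\ast}$ is of constant order and the contraction $\exp(-B_4 T/\ln T)$ appears. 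Combined with $\sum_{t=t^\ast}^N S\eta_t \ge$ (something proportional to $1/A_5$ times the relevant scale), the boundary term becomes $A_5 R \exp(-(B_4 T/\ln T - 1))$. For the variance term, $\sum_{t=t^\ast}^N \eta_t^2 S G^2$ divided by $2S\sum_{t=t^\ast}^N \eta_t$ is a ratio of a geometric series in $\alpha^{-2(t-1)}$ to one in $\alpha^{-(t-1)}$, which evaluates to something like $\tfrac{G^2}{2}\eta_{t^\ast}\cdot\tfrac{\alpha^2}{\alpha^2-1}$ times a $(1-\alpha^{-2(\cdots)})/(1-\alpha^{-(\cdots)})$ correction; plugging $\eta_{t^\ast} \approx (\alpha-1)/(2\mu\alpha)\cdot \log_\alpha T / T$ (from the definition of $t^\ast$) produces the $C_5 \ln T / T$ term with $C_5 = \alpha(2 + 1/(\alpha^2-1))G^2/(2\mu\ln\alpha)$, where the $\ln\alpha$ comes from converting $\log_\alpha T$ to $\ln T$.

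The main obstacle I expect is the bookkeeping around the cutoff index $t^\ast$: one has to verify simultaneously (i) that $t^\ast$ is well-defined and $\le N$ for all $T>1$ under $\eta_0<1/(2\mu)$ and $S=T/\log_\alpha T$, (ii) that for $t\ge t^\ast$ the per-stage factors $(1-\mu\eta_t)$ are all in $[0,1)$ so the telescoping is valid (this needs $\mu\eta_{t^\ast}\le 1$, again guaranteed by the definition of $t^\ast$), and (iii) that the ``dropped'' initial stages $1,\dots,t^\ast-1$ genuinely contribute the stated exponential contraction rather than being lost — i.e. relating $r_1^{t^\ast}$ back to $R$ and showing the $\exp(B_4 T/\ln T - 1)$ denominator survives after accounting for $t^\ast$ possibly being $0$. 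The case $t^\ast = 0$ (no stages dropped, $\hat x_T$ averages everything) should be handled by noting the max in the definition and checking the bound degrades gracefully to the form already controlled in Theorem~\ref{sec:thm:last-iterate}. Once the $t^\ast$-arithmetic is pinned down, the rest is the routine geometric-series evaluation sketched above.
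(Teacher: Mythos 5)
Your overall strategy coincides with the paper's: sum the per-step inequality $2\eta_t\E[f(x_i^t)-f^\ast]\le \E\|x_i^t-x^\ast\|^2-\E\|x_{i+1}^t-x^\ast\|^2+\eta_t^2G^2$ over the tail stages $t\ge t^\ast$, apply Jensen to the $\eta_t$-weighted average, lower-bound the normalizer $S\sum_{t\ge t^\ast}\eta_t$ by $1/A_5=(\alpha-1)/(2\mu\alpha)$ via the definition of $t^\ast$, and control the boundary term $\E\|x_1^{t^\ast}-x^\ast\|^2$ through the strongly convex contraction over the dropped stages. (One small remark: the paper deliberately discards the $-\mu\eta_t\|x_i^t-x^\ast\|^2$ term here and uses only plain convexity, so the distances telescope with unit weights; keeping the $(1-\mu\eta_t)$ factor as in your recursion would force a genuinely weighted telescoping that you do not actually need.)

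There is, however, one genuine gap: your bound $\E\|x_1^{t^\ast}-x^\ast\|^2\le R\exp(-\mu\sum_{\tau<t^\ast}S\eta_\tau)$ is false for SGD. The iterates over stages $1,\dots,t^\ast-1$ obey $r_{i+1}\le(1-2\mu\eta_t)r_i+\eta_t^2G^2$, so unrolling leaves not only the contracted initial distance but also an accumulated-noise term $\frac{G^2}{2\mu}\sum_{t=1}^{t^\ast-1}\eta_t\exp\bigl(-2\mu S\sum_{l>t}^{t^\ast-1}\eta_l\bigr)$. This term is \emph{not} lower order: by the choice of $t^\ast$ it evaluates (via the estimate $\int_1^{\infty}xe^{-x}\,dx\le 2/e$) to $\frac{G^2(\alpha-1)}{2\mu^2}\cdot\frac{\log_\alpha T}{T}$, and after multiplication by $A_5$ it contributes $\frac{\alpha G^2}{\mu}\cdot\frac{\log_\alpha T}{T}$ --- that is the ``$2$'' inside the factor $\bigl(2+\tfrac{1}{\alpha^2-1}\bigr)$ of $C_5$, which dominates the $\tfrac{1}{\alpha^2-1}$ piece you do derive from the ratio $\sum\eta_t^2/\sum\eta_t$. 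So your sketch arrives at the stated $C_5$ only by assertion; the proof requires redoing the contraction argument of Theorem~\ref{thm:sc} up to stage $t^\ast-1$ (this is the most delicate computation in the paper's proof, and it is exactly the step your outline replaces with an incorrect noiseless bound). Once that term is restored, the remaining geometric-series bookkeeping you describe matches the paper.
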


\iffalse
\frac{2\mu\alpha}{\alpha-1}\cdot\frac{\left\|x_1^1 - x^{\ast} \right\|^2}{\exp\left(\frac{2\mu\eta_0\alpha}{\alpha-1}\cdot\frac{T}{\log_{\alpha} T}-1\right)}+  \frac{\alpha\left(1+\frac{1}{2(\alpha^2-1)}\right)G^2}{\mu}\cdot \frac{\log_{\alpha} T}{T}.
\fi

\section{Numerical Experiments} \label{Sec:Experiments}

\label{sec:numerical}
In this section, we evaluate the practical performance of step decay step-size \deleted{(termed Step-Decay)} and compare it against 
the following popular step-size policies: 1) constant step-size, $\eta_t=\eta_0$; 2) $1/t$ step-size, $\eta_t=\eta_0/(1+a_0 t)$; 3) $1/\sqrt{t}$ step-size $\eta_t=\eta_0/(1+a_0\sqrt{t})$; %4) HK step-size: ~\citep{hazan2014beyond}, $\eta_i = \eta_0/2^i$,  $t \in [T_i, T_{i+1})$ and $T_{i+1} = 2T_i$, where $\sum_i T_i = T$; 
4) Exp-Decay~\cite{li2020exponential}, $\eta_t= \eta_0 /\alpha^t$ with $\alpha=(\beta/T)^{-1/T}$ for $\beta\geq 1$.
\iffalse
\begin{itemize}
    \item  constant step-size, $\eta_t=\eta_0$.
    \item $1/t$ step-size, $\eta_t=\eta_0/(1+a_0 t)$
    \item $1/\sqrt{t}$ step-size $\eta_t=\eta_0/(1+a_0\sqrt{t})$
    \item HK step-size: ~\citep{hazan2014beyond}, $\eta_i = \eta_0/2^i$,  $t \in [T_i, T_{i+1})$ and $T_{i+1} = 2T_i$, where $\sum_i T_i = T$
    \item Exp-decay~\cite{li2020exponential}, $\eta_t= \eta_0 a^t$ with $a=(\beta/T)^{1/T}$ for $\beta\geq 1$.
\end{itemize}
\fi
In each experiment, we perform a grid search to select the best values for the free parameters $\eta_0, a_0, \beta$ as well as for the step decay step-size parameter $\alpha$. More details about the relationship between the different step-size policies are given in the supplementary material.

\subsection{Experiments on MNIST with Neural Networks}
\label{experiment:mnist}
First, we consider the classification task on MNIST database of handwritten digits\footnote{\url{http://yann.lecun.com/exdb/mnist/}} \replaced{using}{which consists a training set of 60,000 examples and a testing set of 10,000 examples, on} a fully-connected 2-layer neural network with 100 hidden nodes (784-100-10). %The $l_2$ regularization parameter is $10^{-4}$ and the mini-batch size is 128. We run 128 epochs, which implies that the number of iterations $T$ is equal to the training size $(60,000)$.
%$T = 128*n/128=n$ (training size).

\iffalse
In order to fairly compare different step-sizes, the initial step-size $\eta_0$ is chosen from a same search grid $ \left\lbrace 0.001, 0.005, 0.01, 0.05, 0.1, 0.5, 1, 5\right\rbrace$. For Step-Decay, the decay factor $\alpha $ is empirically chosen from an interval $(1, 12]$ and the search grid is in units of 1 after $\alpha \geq 2$.  The optimal parameters for each step-size are selected from grid search and for more details refer to Supplementary D.1.
% \begin{figure}[ht]
% \centering	
% \includegraphics[width=0.23\textwidth,height=1.2in]{images/decay_rate_loss.eps} 
% \hfill
% \includegraphics[width=0.23\textwidth,height=1.2in]{images/decay_rate_losstest.eps} 
% \hfill
% \includegraphics[width=0.32\textwidth,height=2in]{images/decay_rate_generalization_error.eps} 
% \caption{Results of Step-Decay on different decay factor $\alpha$}
% \label{fig:mnist:decayrate}  
%  \end{figure}
\fi 

The last iterate $x_T$ is \replaced{usually}{often} chosen as the output of each algorithm in practice. The theoretical output $\hat{x}_T$ (in Theorem \ref{thm:nonconvex:1}) is drawn from all the previous iterates $\left\lbrace x_i^t \right\rbrace$ with probability $P_t \propto 1/\eta_t$. \replaced{To get an insight into}{To figure out} the relationship between the last iterate and theoretical output in Theorem \ref{thm:nonconvex:1}, we randomly choose 6000 iterates (10\% of the total iterates) with probability $P_t \propto 1/\eta_t$, record their exact training loss and testing loss, and calculate the probabilities (shown in Figure \ref{fig:stepdecay:prob}). We can see that the theoretical output can reach the results of the last iterate with high probability, no matter training loss or testing loss. 

\begin{figure}[ht!]
\vskip 0.1in
\begin{center}
\centerline{
\subfigure[Training loss]{\includegraphics[width=0.35\textwidth,height=1.8in]{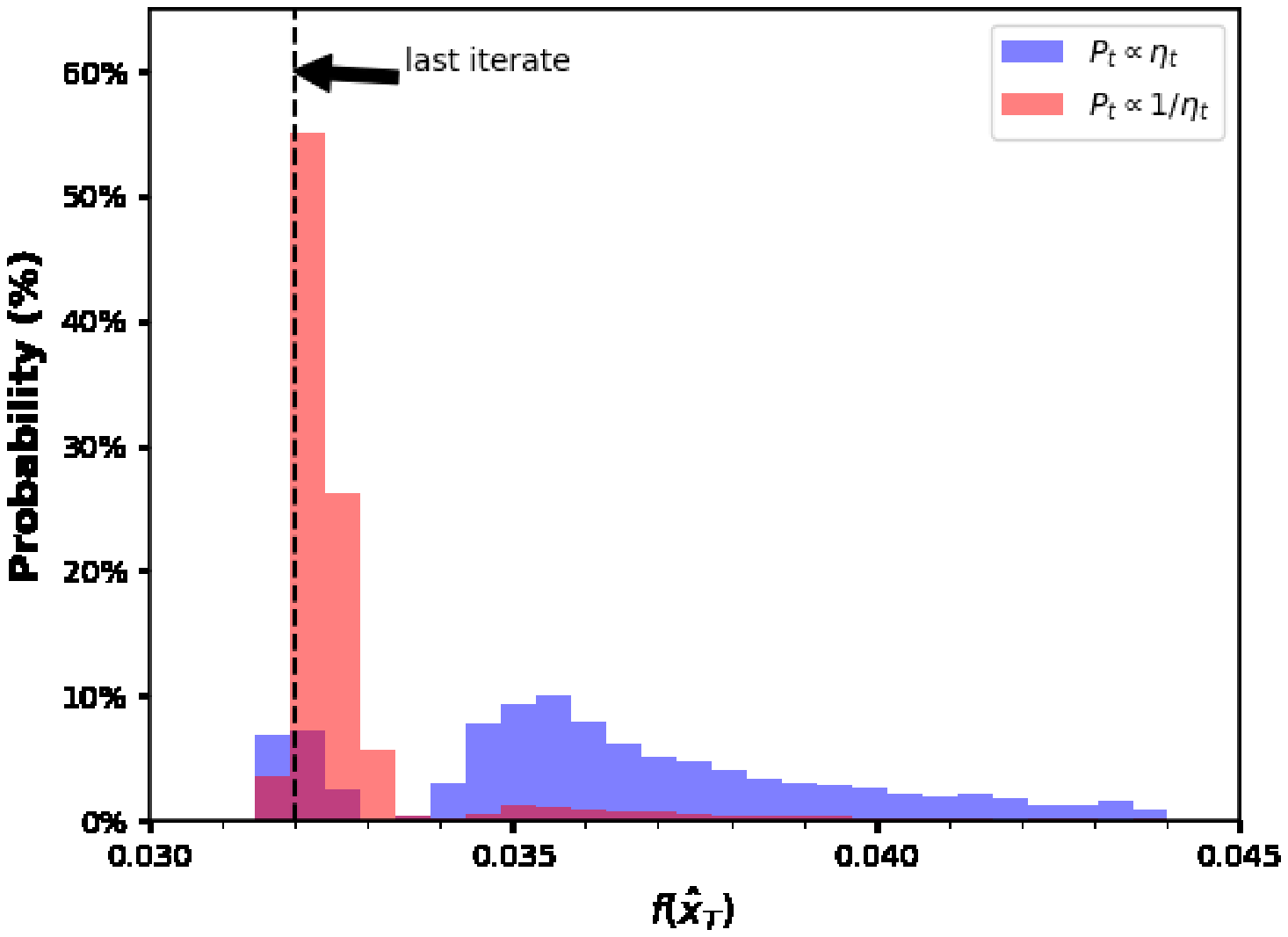}}
%\subfigure[Gradient norm ]{\includegraphics[width=0.235\textwidth,height=1.4in]{images/stepdecay_mnist_grad_prob.pdf} }
\subfigure[Testing loss]{\includegraphics[width=0.35\textwidth,height=1.8in]{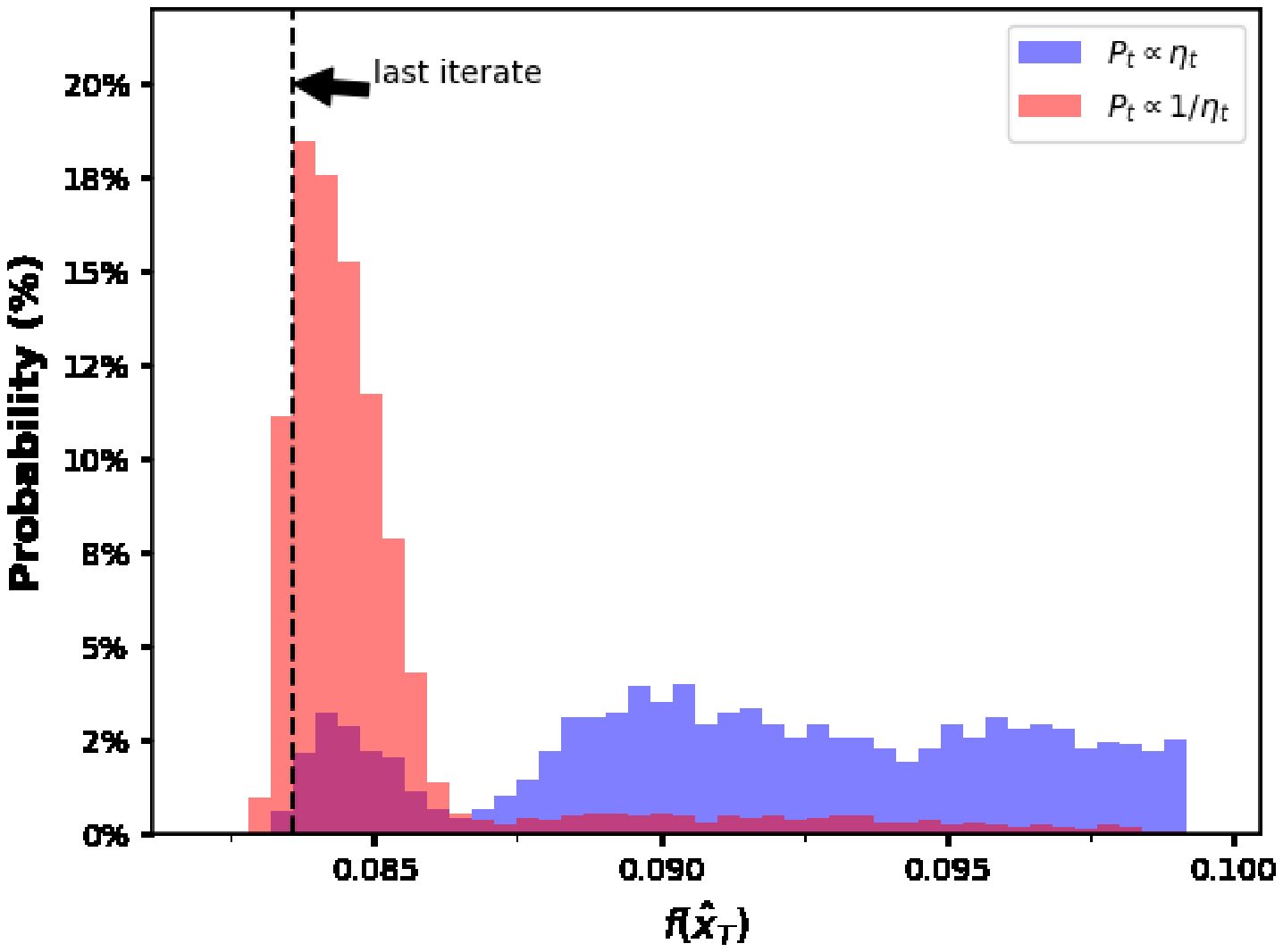} }
} 
\caption{The comparison of the two probabilistic outputs.}
\label{fig:stepdecay:prob}  
\end{center}
\vskip -0.1in
\end{figure}

In the same way, in order to show the advantages of probability $P_t \propto 1/\eta_t$ over $P_t \propto \eta_t$, we also implement Step-Decay with probability $P_t \propto \eta_t$ in Figure \ref{fig:stepdecay:prob}. We can see that the output with probability $P_t \propto 1/\eta_t$ is more concentrated at the last phase and has a higher probability, especially, in terms of loss, compared to the result of probability $P_t \propto \eta_t$.

The performance of the various step-size schedules is shown in Figure \ref{fig:stepdecay:mnist}. It is observed that Exp-Decay and Step-Decay performs better than other step-sizes both in loss (training and testing) and testing accuracy. \replaced{Step-Decay has an advantage over Exp-Decay in the later stages of training \replaced{where it}{and} attains a lower training and testing loss.}{In particular, Step-Decay exceeds Exp-Decay and achieves lower loss in terms of training loss and testing loss at the second stage.}
\begin{figure*}[ht]
\vskip 0.1in
\begin{center}
\centerline{
\subfigure[Training loss]{\includegraphics[width=0.3\textwidth,height=1.6in]{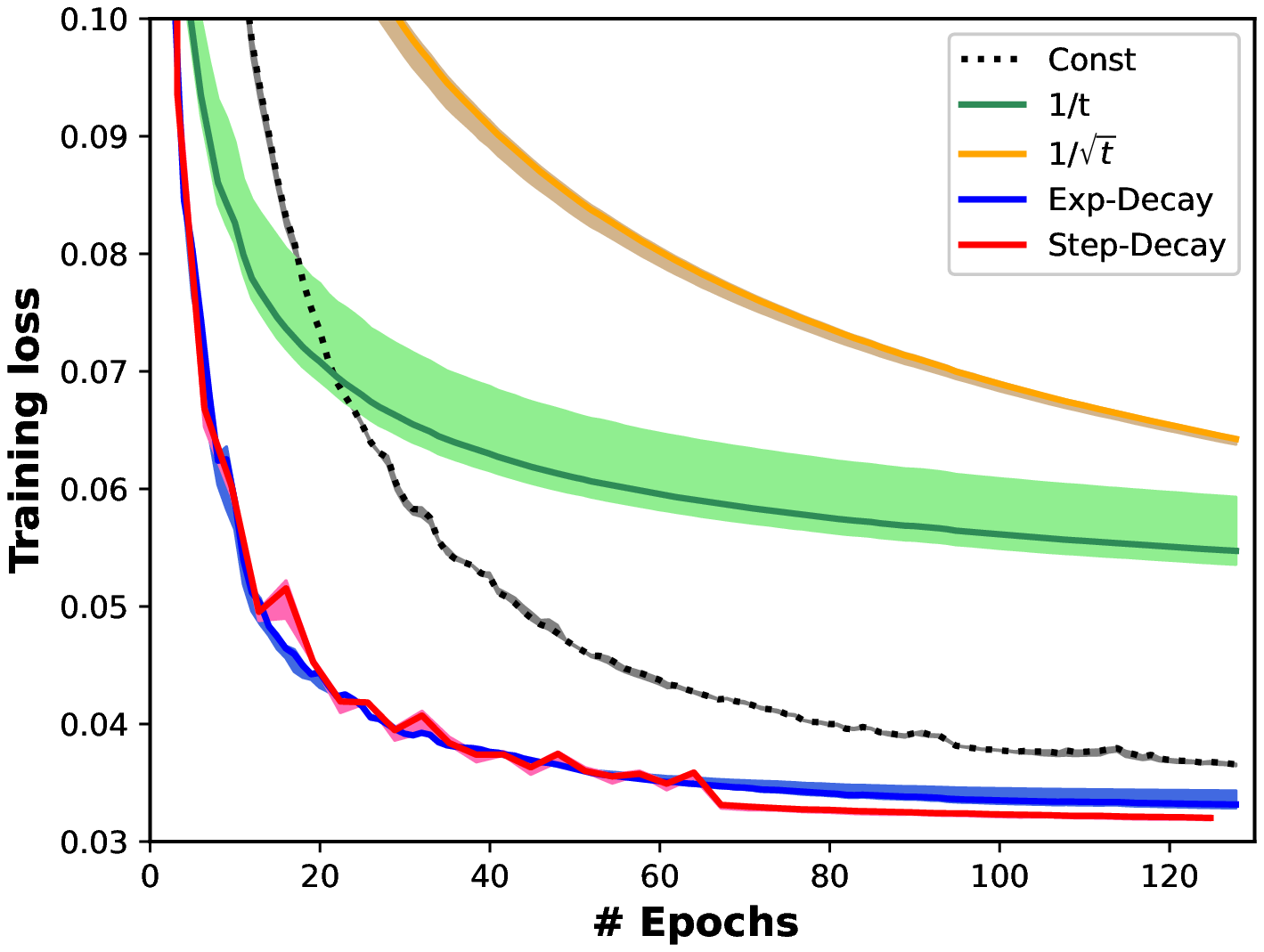} }
\subfigure[Testing loss]{\includegraphics[width=0.3\textwidth,height=1.6in]{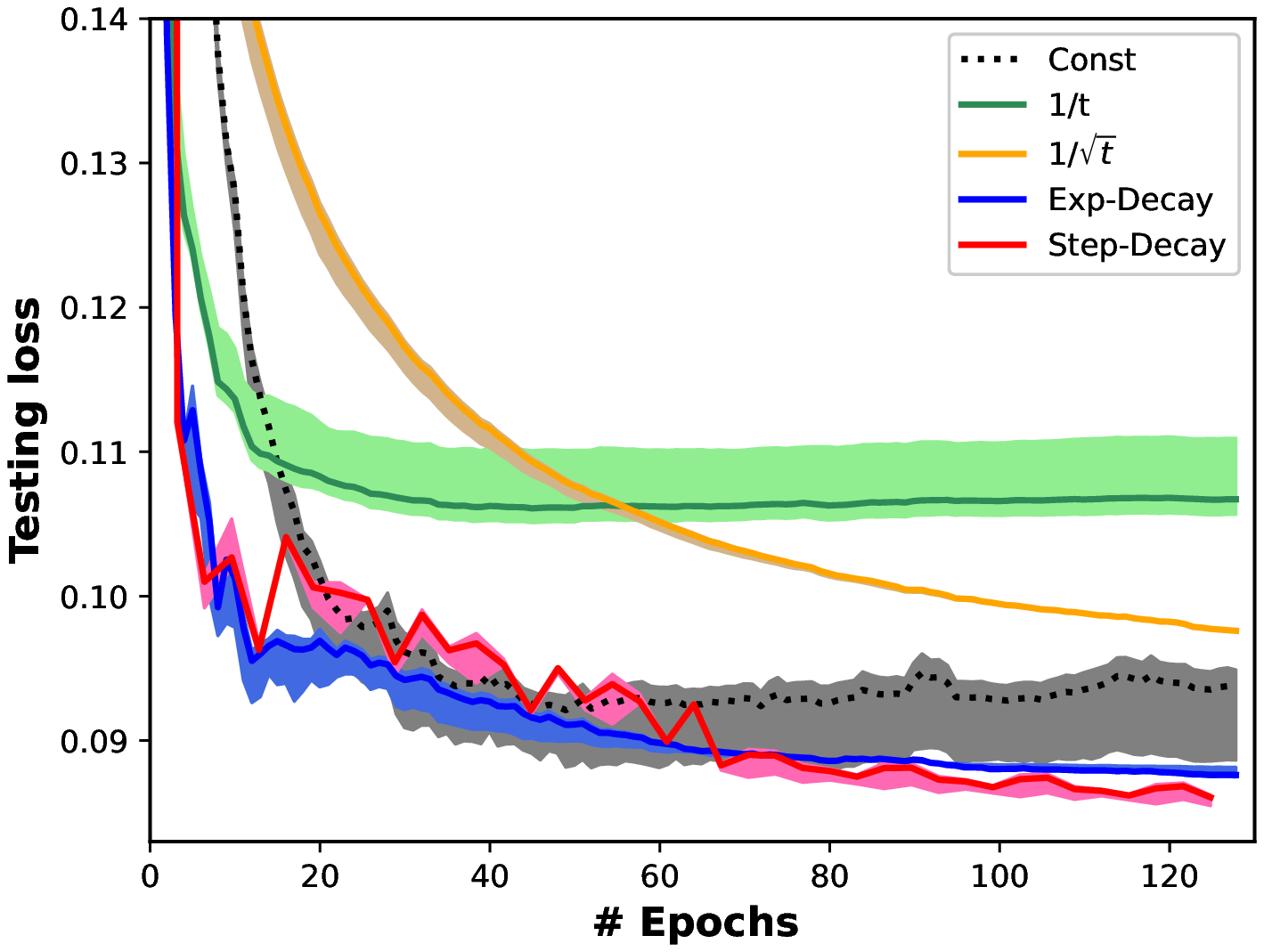} }
\subfigure[Testing accuracy]{\includegraphics[width=0.3\textwidth,height=1.6in]{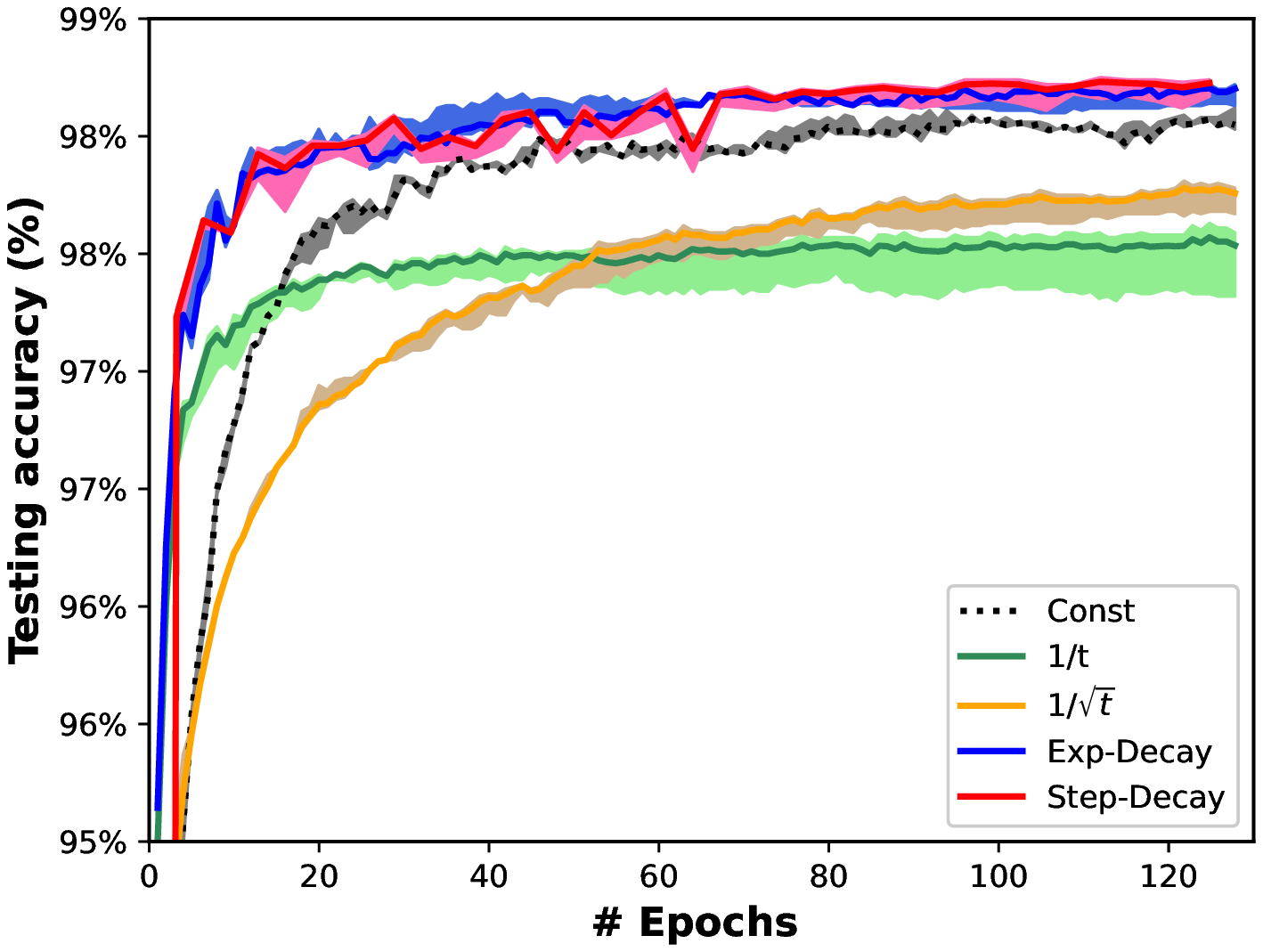} }} 
\caption{Results on MNIST}%: $\eta_0=0.5$ for Const; $\eta_0=1$ and $a_0=100/T$ for $1/t$; $\eta_0=1$ and $a_0=100/\sqrt{T}$ for $1/\sqrt{t}$; $\eta_0=0.5$ and $\beta = 0.1*T$ for Exp-Decay; $\eta_0=0.5$ and $\alpha=7$ for Step-Decay.  }
\label{fig:stepdecay:mnist}  
\end{center}
\vskip -0.1in
\end{figure*}

\iffalse
{\color{blue}
\begin{itemize}
    \item We can shorten the next two paragraphs, once we have decided what we want to show. Can't we simply take a trace of iterates from step-decay, show the distributions for $P_t\sim \eta_t$, $P_t\sim 1/\eta_t$ and perhaps also the distribution of the last iterate(s)? From this data, one could make the point that the new averaging technique is closer to analyze the last iterate than previous approaches. Xiaoyu: OKay.
\end{itemize}
}
\fi

\subsection{Experiments on CIFAR10 and CIFAR100}
\label{experiment:cifar}
\iffalse
{\color{blue}
\begin{itemize}
    \item In the intro, we should aim directly for CIFAR100, and also say that complementary experiments on CIFAR10 are reported in the supplementary.
    \item We now have to think about if we want to draw some of the conclusions that we mention for CIFAR10 in the main text.
    \item We may also need to move the ADAM experiments to appendix. This should then shorten this section substantially.
\end{itemize}

}
\fi
To illustrate the practical implications of \added{the} step decay step-size, we perform experiments with deep learning tasks on the CIFAR\footnote{\url{https://www.cs.toronto.edu/~kriz/cifar.html}} dataset. We will focus on results for CIFAR100 here, and present complementary results for CIFAR10 in the supplementary material.
\iffalse
{two real-world datasets}{carry out the experiments on standard deep learning experiments by using deep networks on two real world datasets}: CIFAR10 and CIFAR100\footnote{\url{https://www.cs.toronto.edu/~kriz/cifar.html}}.  %, which both consist of 60000 colour images (50000 training images and the rest 10000 images for testing). The maximum epochs called for the two datasets is 164 and batch size $b=128$. 
% The complementary experiments on CIFAR10 are reported in the supplementary material.} 
\fi
To eliminate the influence of stochasticity, all experiments are repeated 5 times.

We consider the benchmark experiments for CIFAR100 on a 100-layer DenseNet~\citep{huang2017densely}. We employ vanilla SGD without dampening and use \added{a} weight decay of 0.0005.  \replaced{The optimal step-size and algorithm parameters are selected using a grid search detailed in the supplementary material.}{, We use grid search to select the optimal parameters of the step-sizes and the model on CIFAR100 (the details are given in the supplementary material)} The results \deleted{on CIFAR100} are shown in Figure \ref{fig:stepdecay:cifar100}. \replaced{We observe that Step-Decay achieves the best results in terms of both testing loss and testing accuracy, and that it is also fast in reaching a competitive solution.}{can get better results compared to other step-sizes in terms of testing loss and testing accuracy at the final stage. Besides, Step-Decay step-size can be faster to reach a good solution.}
%We can see that Step-Decay show their advantages over constant step-size, two diminishing step-sizes, e.g., $1/t$ and $1/\sqrt{t}$ and Exp-Decay. 
Another observation is that as the iterates proceeds \replaced{in}{at} each phase, its testing loss and accuracy is getting worse because its generalization ability is weakened. Therefore, deciding when to stop the iteration or reduce the step-size is important. %Another observation is that even though Step-Decay lose its advantages at the end of second phase, but finally outperforms Exp-Decay step-size at the last phase. 

\begin{figure*}[ht]
\vskip 0.1in
\begin{center}
\centerline{
\subfigure[Training loss]{\includegraphics[width=0.3\textwidth,height=1.6in]{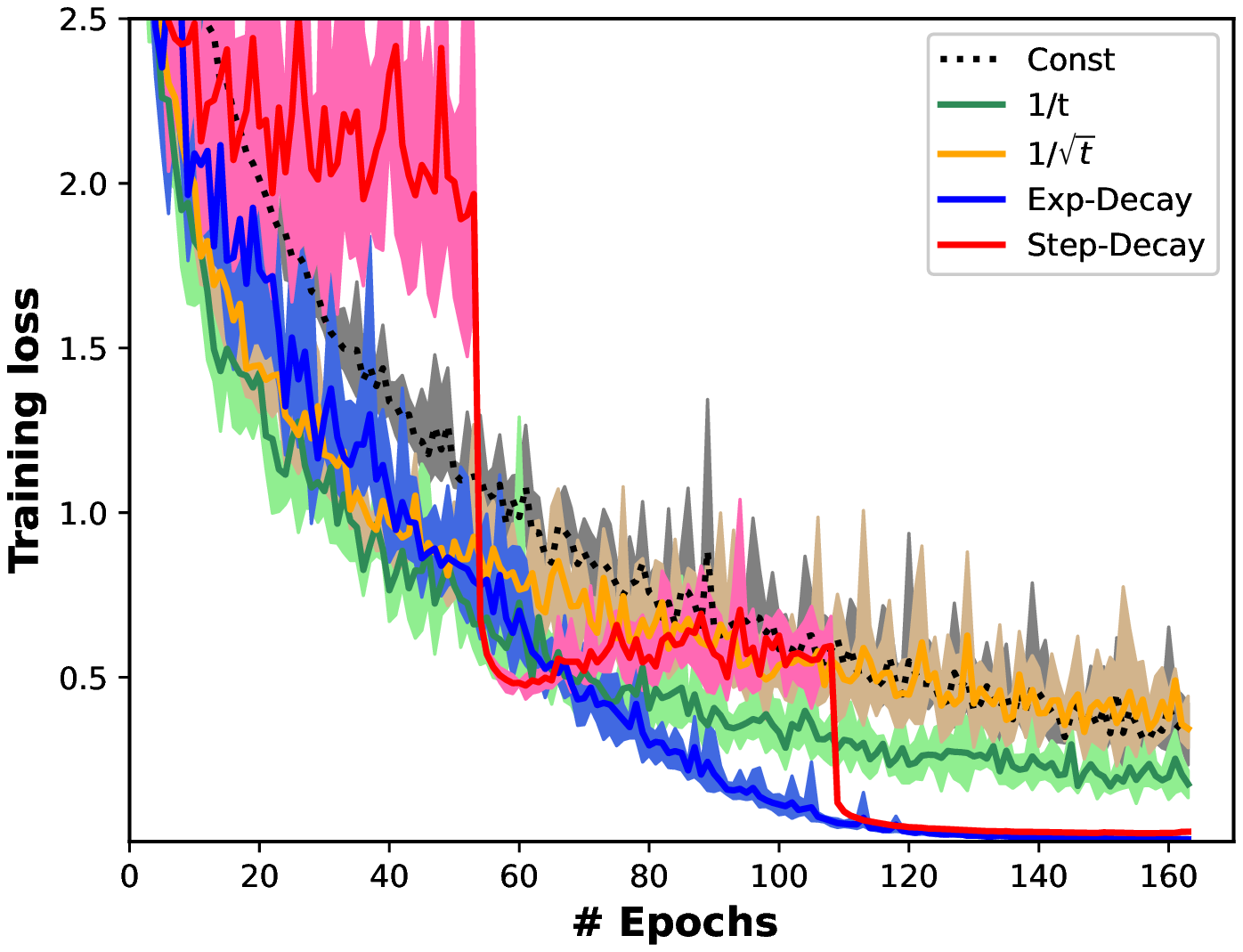} }
\subfigure[Testing loss]{\includegraphics[width=0.3\textwidth,height=1.6in]{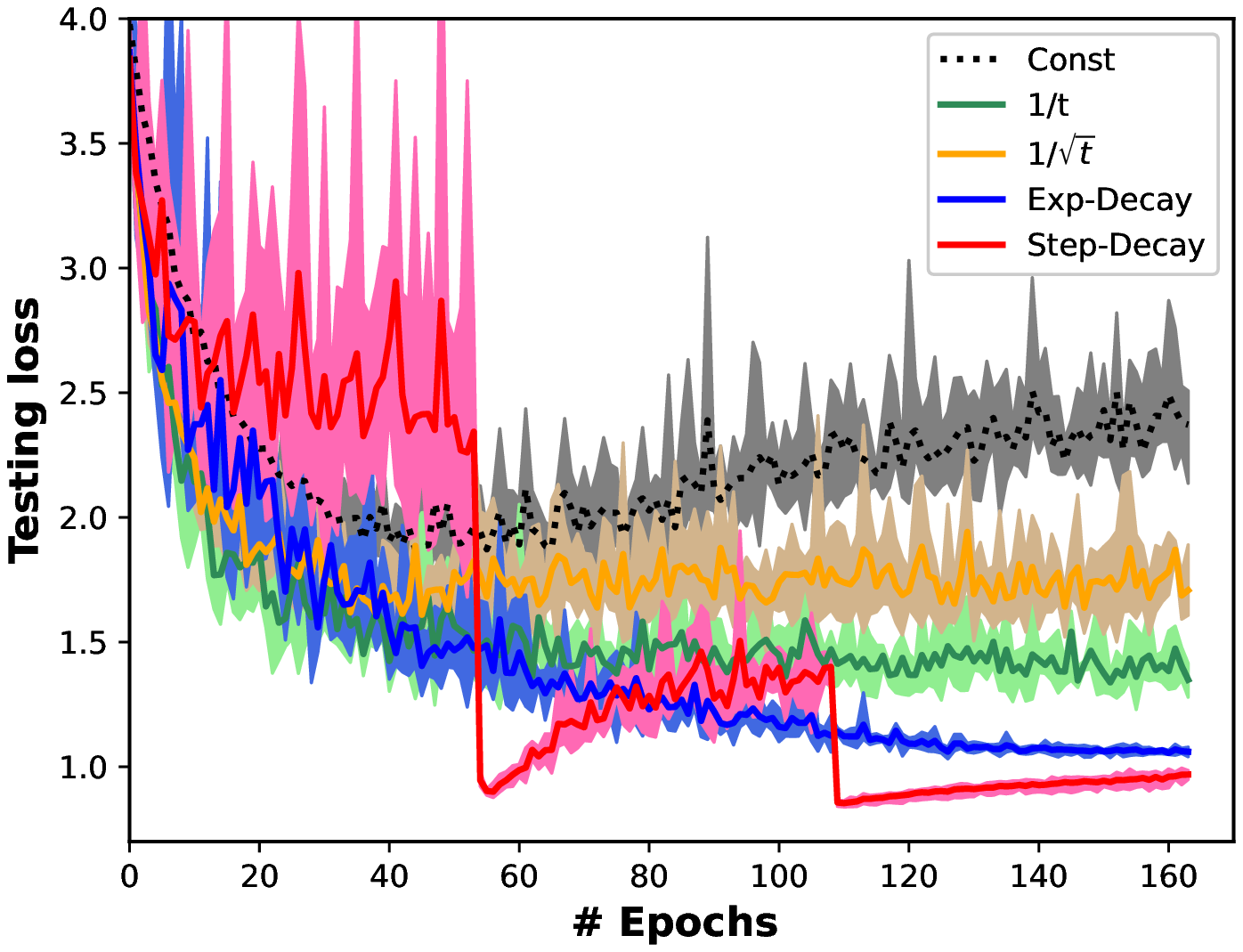} }
\subfigure[Testing accuracy]{\includegraphics[width=0.3\textwidth,height=1.6in]{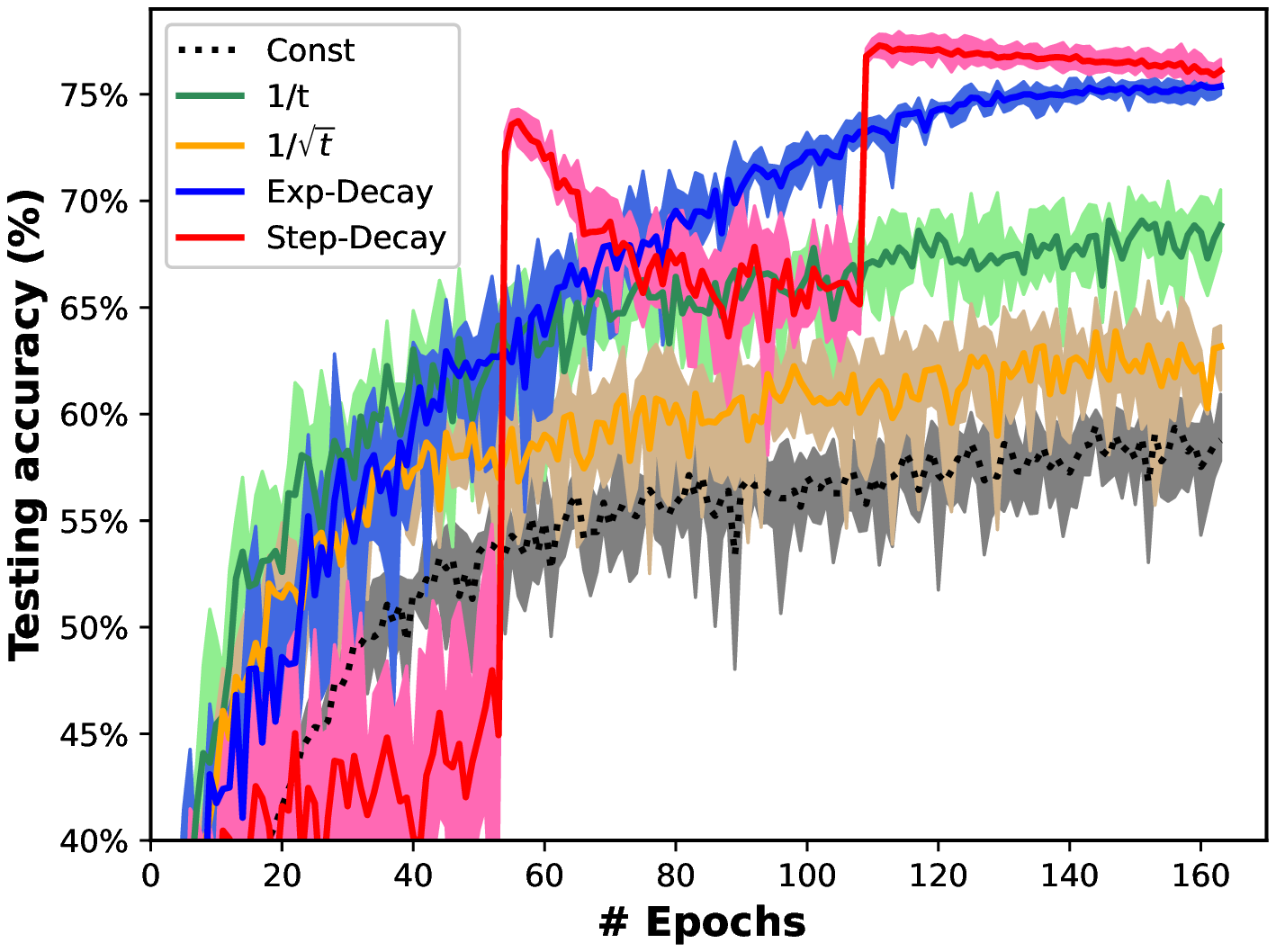}  }} 
\caption{Results on CIFAR100 - DenseNet}%: $\eta_0=0.1$ and weight-decay is 0.0001 for Const; $\eta_0=1$ and $a_0=100/T$ for $1/t$; $\eta_0=1$ and $a_0=100/\sqrt{T}$ for $1/\sqrt{t}$; $\eta_0=1$ and $\beta=0.01*T$ for Exp-Decay; $\eta_0=1$ and $\alpha=6$ for Step-Decay.}
\label{fig:stepdecay:cifar100}  
\end{center}
\vskip -0.1in
\end{figure*}
%The initial step-size for all step-sizes is chosen from $ \left\lbrace 0.0001, 0.0005, 0.001, 0.005, 0.01, 0.05, 0.1, 0.5, 1 \right\rbrace$.
%For constant step-size, $\eta_0=0.01$. For $1/t$ step-size, $\eta_0 = 0.1$ and $a_0$ is set such that $\eta_{T}$ is tuned from $\left\lbrace 0.0001, 0.0005, 0.001, 0.05, 0.01, 0.5, 0.1 \right\rbrace$ ($\eta_{T} = 0.005$). For $1/\sqrt{t}$, $\eta_0 = 0.1$ and $a_0$ is set to make sure that $\eta_{T}$ is tuned from $\left\lbrace 0.0001, 0.0005, 0.001, 0.05, 0.01, 0.5, 0.1 \right\rbrace$ ($\eta_T=0.005$). 
%For Exp-Decay, $\eta_0 = 0.1$ and $\beta$ is chosen such that $\eta_T$ reaches $\left\lbrace 0.0001, 0.0005, 0.001, 0.05, 0.01, 0.5, 0.1 \right\rbrace$ ($\eta_{T} = 0.0005$). For Step-Decay, the initial step-size $\eta_0 = 0.05$ and decay factor $\alpha = 6$.

Finally, we compare the performance of Exp-Decay and Step-Decay on Nesterov’s accelerated gradient (NAG)~\citep{nesterov1983,sutskever2013importance} \deleted{of 0.9} and other adaptive gradient methods, \replaced{including}{for example,} AdaGrad~\citep{AdaGrad}, Adam~\citep{Adam} and AdamW~\citep{AdamW}. The results are shown in Table \ref{tab:cifar100:adap-grad}. %The maximum epochs called is 164 and batch size for each algorithm is 128.
 All the parameters involved in step-sizes, algorithms, and models are best-tuned (shown in supplementary material).  The $\pm$ shows 95\% confidence intervals of the mean accuracy value over 5 runs.  We can see that compared to \added{the} Exp-Decay step-size, Step-Decay \deleted{step-size} can reach  higher testing accuracy on Adam, AdamW and NAG. \replaced{We therefore beleieve that}{This means that} Step-Decay step-size is more likely to be extended and applied to other methods.
\begin{table}[t]
\caption{The testing accuracy on CIFAR100-DenseNet}
\label{tab:cifar100:adap-grad}
\begin{center}
\begin{small}
\begin{sc}
\begin{tabular}{lc}
\toprule
 \multirow{2}{*}{Method}  & CIFAR100-DenseNet \\
 & Testing accuracy \\ \midrule
AdaGrad  & 0.6197 $\pm$ 0.00518 \\ \hline 
   Adam + Exp-Decay & 0.6936 $\pm$ {\bf 0.00483} \\ \hline 
   Adam + Step-Decay & {\bf 0.7041} $\pm$ 0.00971\\ \hline 
   AdamW + Exp-Decay & 0.7165 $\pm$ 0.00353 \\ \hline 
   AdamW + Step-Decay &  {\bf 0.7335} $\pm$ {\bf 0.00261} \\ \hline NAG + Exp-Decay  & 0.7531 $\pm$ 0.00606 \\ \hline 
   NAG + Step-Decay &  {\bf 0.7568} $\pm$ {\bf 0.00156}\\
   \bottomrule
\end{tabular}
\end{sc}
\end{small}
\end{center}
\end{table}

 %: $\eta_0=0.05$ for  AdaGrad; For Adam:  $\eta_0=0.005$, ($\beta_1, \beta_2$)=(0.9, 0.99), $\alpha=6$ for Step-Decay  and  $\beta=0.05*T$ for Exp-Decay. The weight-decay is 0.025 for AdamW and other parameters are the same as Adam.}

%A smaller confidential interval means a more precise estimate.

\subsection{Experiments on Regularized Logistic Regression } 
% \subsection{More experiments on Regularized Logistic Regression}
\label{experiment:logistic}
\iffalse
{\color{blue}
\begin{itemize}
    \item We have to wait until we have edit the rest and added a conclusion to see if we can keep this in the main text. \item For this problem, we may also move the problem formulation to the appendix. 
    \item We may also need to go from 4 to 2 plots for this section. Still, it is not clear that we will be able to make it fit.
\end{itemize}
}
\fi

 We now turn our attention to how the step-decay step-size and other related step-sizes behave in the strongly convex setting. %and demonstrate the initial step-size of step-decay step-size is more robust than constant step-size and polynomial decaying step-size. %effectiveness of step decay step-size in the strongly convex setting. 
We consider the regularized logistic regression problem on the binary classification dataset rcv1.binary ($n = 20242; d =
47236$) from LIBSVM~\footnote{\url{https://www.csie.ntu.edu.tw/~cjlin/libsvmtools/datasets/}}, where a 0.75 partition is used for training and the rest is for testing. %The number of the iteration $T = n$ ($n$ is the training size) and batch size $b=128$. The regularization parameter $\lambda=10^{-4}$.
\iffalse
\begin{align*}
\min f(x) = \frac{1}{n}\sum_{i=1}^{n}\ln(1+\exp(-b_ia_i^{T}x)) + \frac{\lambda}{2}\left\|x\right\|^2,
\end{align*}
where $\left\lbrace a_i, b_i \right\rbrace_{i=1}^{n}$ is the training sample with $a_i \in \R^d$ and $b_i \in \left\lbrace -1, 1 \right\rbrace$ and $\lambda > 0$ is the regularization parameter. 
We use the binary classification dataset rcv1.binary ($n = 20242; d =
47236$) from LIBSVM~\footnote{\url{https://www.csie.ntu.edu.tw/~cjlin/libsvmtools/datasets/}}, where 0.75 partition is used for training and the rest is for testing. The number of the iteration $T = n$ ($n$ is the training size) and batch size $b=128$. The regularization parameter $\lambda=10^{-4}$.

\fi

Figure~\ref{fig:rcv:constant} shows how a constant step-size has to be well-tuned to give good performance: if we choose it too small, convergence will be painstakingly slow; if we set it too large, iterates will stagnate or even diverge. This effect is also visualized in blue in Figure~\ref{fig:rcv:initial}, where only a narrow range of values results in a low training loss for the constant step-size. In contrast, the initial step-size of both Exp-Decay and Step-Decay can be selected from a wide range ($\approx 10-10^4$) and still yield good results in the end. In other words, Exp-Decay and Step-Decay are more robust to the choice of initial step-size than the alternatives. A more thorough evaluation of all the considered step-sizes on logistic regression can be found in Supplementary~D.3. 
%First, we empirically test the behavior of different constant step-sizes to verify our statements in the introduction. %: too large step-size the iterate may diverge; too small, the iterate is slow or never converge; the relatively large step-size has a faster convergence. 
%The result is shown in Figure \ref{fig:rcv:constant}.
 \begin{figure}[ht]
\begin{center}
\centerline{\subfigure[Constant step-size]{ \label{fig:rcv:constant} \includegraphics[width=0.35\textwidth,height=1.8in]{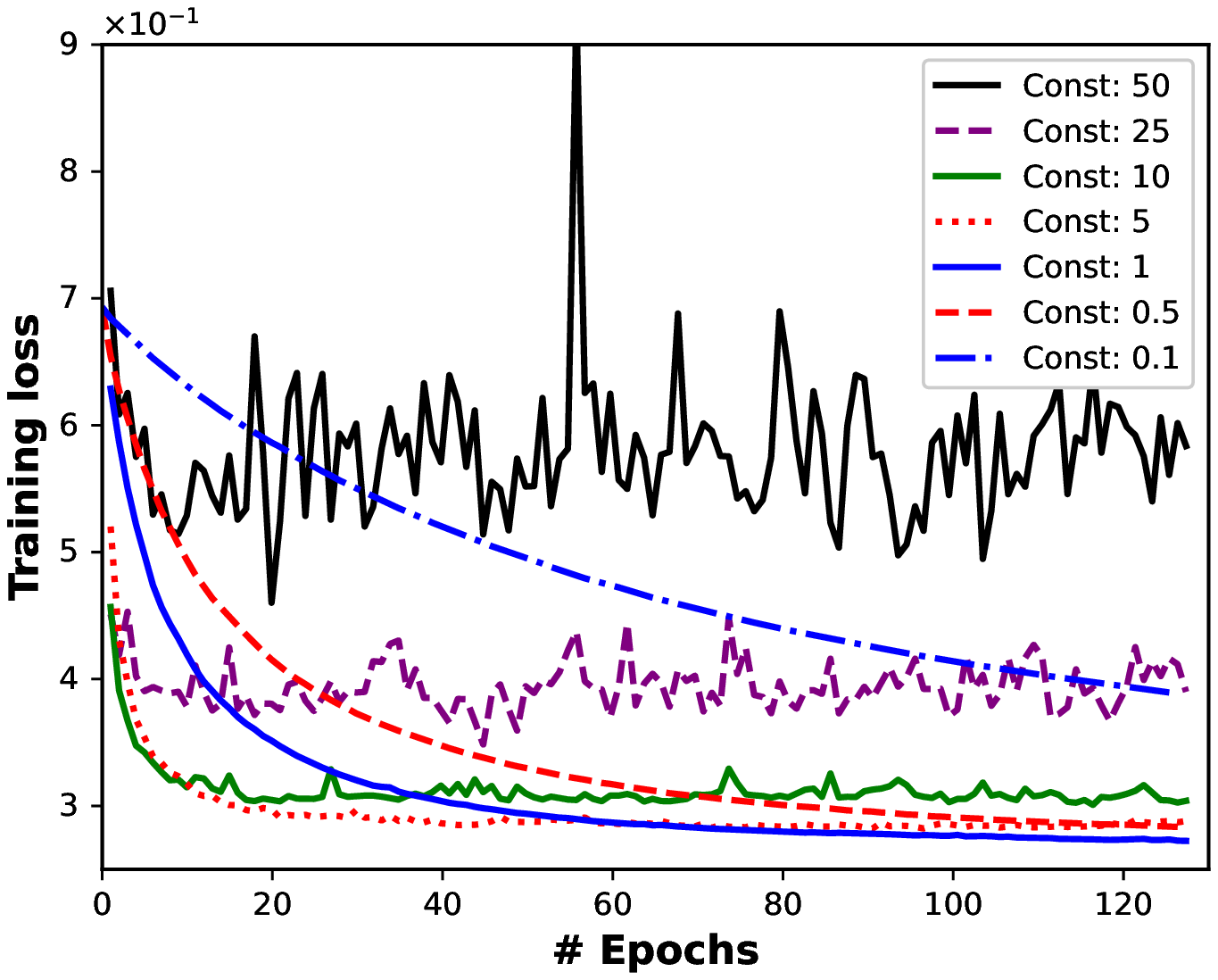}}
 \subfigure[Initial step-sizes]{ \label{fig:rcv:initial} \includegraphics[width=0.35\textwidth,height=1.8in]{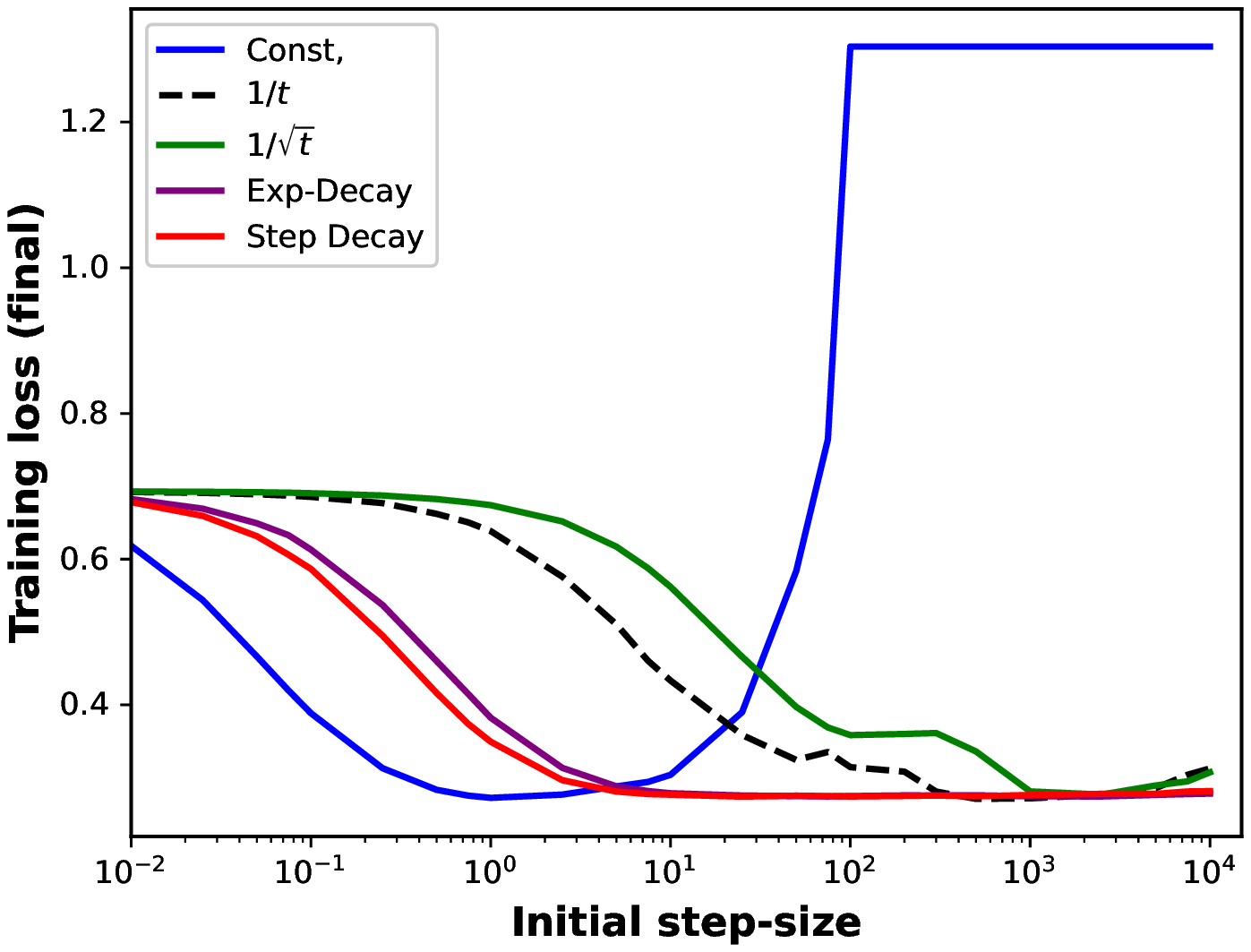}}
}
\caption{The results on rcv1.binary - logistic($L_2$)}
\label{fig:const:stepdecay} 
\end{center}
 \end{figure}

\section{Conclusion}
\label{sec:conclusion}

We have provided theoretical guarantees for SGD under the step decay family of step-sizes, widely used in deep learning.
%In this paper, we provided theoretical guarantees for SGD under step decay step-sizes, which are widely used in deep learning. 
%
\replaced{Our first results established a near-optimal $\mathcal{O}(\ln T/\sqrt{T})$ rate for step decay step-sizes in the non-convex setting. A key step in our analysis was to use a novel non-uniform probability distribution $P_t \propto 1/\eta_t$ for selecting the output of the algorithm. We showed that this approach allows to improve the convergence results for SGD under other step-sizes as well, e.g.,  by removing the $\ln T$ term in the best known convergence rate for  $\eta_t =1/\sqrt{t}$ step-sizes.}{
In the nonconvex and smooth case, we proposed a novel non-uniform probability policy $P_t \propto 1/\eta_t$ for selecting the output %which implies that the last iterate is more likely selected, 
and the experiment on MNIST showed that the theoretical output can reach the result of the last iterate with high probability. Based on this policy, we proved a near-optimal $\mathcal{O}(\ln T/\sqrt{T})$ rate for step decay step-sizes, improved the results for exponential decay step-size~\citep{li2020exponential} and removed the $\ln T$ term for the standard $1/\sqrt{t}$ step-size.} Moreover, we established near-optimal (compared to the min-max rate) convergence rates \replaced{for}{in several cases including}  general convex, strongly convex and smooth, and strongly convex and nonsmooth problems. \replaced{We illustrated the superior performance of step-decay step-sizes for training of large-scale deep neural networks.}{The experiments on several real world datasets shows that step decay step-size outperforms than the constant and polynomial step-sizes, is competitive to exponential decay step-size and more likely to be extended to other methods (e.g. NAG and Adam), and is more robust to select the initial step-size than polynomial step-sizes.} In the experiments, we observed that as the iterates proceeding in each phase, their generalization abilities are getting worse. Therefore, it will be an interesting to study how to best select the size of inner-loop $S$ (instead of constant or exponentially growing) to avoid the loss of generalization.

%established an  $\mathcal{O}(\ln T/\sqrt{T})$ rate in the general convex case, proved a near-optimal $\mathcal{O}(\ln T/\sqrt{T})$ for strongly convex and smooth problems which we also proved to be tight, and provided an $\mathcal{O}(\ln^2 T/T)$ for strongly convex and nonsmooth problems.

\bibliography{sgd_stepdecay}

\newpage
\vspace{0.5em}

{\centering
\section*{Supplementary Material for "On the Convergence of Step Decay Step-Size for Stochastic Optimization"}

}
\vspace{0.5em}
% \title{Supplementary Material for 
% \hspace{0.05em}

% On the Convergence of Step Decay Step-Size for Stochastic Optimization
% 	 }
	 
% \date{}	 
	
% \maketitle 

\section*{A. Proofs of Section \ref{sec:nonconvex}}

Before presenting the proofs of Section~\ref{sec:nonconvex}, we state and prove the following useful lemma.
\begin{lemma}\label{lem:nonconvex}
Suppose that $f$ is $L$-smooth on $\R^d$ and the stochastic gradient oracle is variance-bounded by $V^2$. If $\eta_t \leq 1/L$, consider the SGD algorithm, we have 
\begin{align*}
\frac{\eta_t}{2}\E[\left\|\nabla f(x_t)\right\|^2] \leq \E[f(x_t)] - \E[f(x_{t+1})] + \frac{LV^2\eta_t^2}{2}.
\end{align*}
\end{lemma}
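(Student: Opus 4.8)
The plan is to run the standard ``descent lemma'' argument for one SGD step, with the only subtlety being how to handle the stochastic gradient and its variance bound. First I would write $x_{t+1} = x_t - \eta_t \hat{g}_t$ (ignoring projection here, since the lemma is stated in the $\R^d$/non-convex setting) and invoke $L$-smoothness to get
\begin{align*}
f(x_{t+1}) \leq f(x_t) + \langle \nabla f(x_t), x_{t+1} - x_t\rangle + \frac{L}{2}\|x_{t+1}-x_t\|^2 = f(x_t) - \eta_t \langle \nabla f(x_t), \hat{g}_t\rangle + \frac{L\eta_t^2}{2}\|\hat{g}_t\|^2.
\end{align*}

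Next I would take conditional expectation given $x_t$ (i.e. condition on the past, so $\nabla f(x_t)$ is deterministic and $\E[\hat{g}_t] = \nabla f(x_t)$). The cross term becomes $-\eta_t \|\nabla f(x_t)\|^2$. For the last term, I would use $\E[\|\hat{g}_t\|^2] = \|\E[\hat{g}_t]\|^2 + \E[\|\hat{g}_t - \E[\hat{g}_t]\|^2] \leq \|\nabla f(x_t)\|^2 + V^2$ via the bounded-variance assumption \ref{assump:bounded-variance}. This yields
\begin{align*}
\E[f(x_{t+1})\mid x_t] \leq f(x_t) - \eta_t \|\nabla f(x_t)\|^2 + \frac{L\eta_t^2}{2}\big(\|\nabla f(x_t)\|^2 + V^2\big) = f(x_t) - \eta_t\Big(1 - \frac{L\eta_t}{2}\Big)\|\nabla f(x_t)\|^2 + \frac{L\eta_t^2 V^2}{2}.
\end{align*}

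Now I would use the step-size restriction $\eta_t \leq 1/L$, which gives $1 - L\eta_t/2 \geq 1/2$, hence $\eta_t(1 - L\eta_t/2)\|\nabla f(x_t)\|^2 \geq \frac{\eta_t}{2}\|\nabla f(x_t)\|^2$. Rearranging gives $\frac{\eta_t}{2}\|\nabla f(x_t)\|^2 \leq f(x_t) - \E[f(x_{t+1})\mid x_t] + \frac{L\eta_t^2 V^2}{2}$. Finally I would take total expectation (tower property) on both sides to replace $f(x_t)$ by $\E[f(x_t)]$ and obtain the stated inequality. There is no real obstacle here; the only point requiring a little care is the bookkeeping on conditioning — making sure $\eta_t$ is deterministic (which it is, being a fixed schedule) and that the variance bound is applied to the conditional expectation — after which the tower property closes the argument cleanly.
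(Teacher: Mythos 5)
Your proposal is correct and follows essentially the same route as the paper's proof: apply the descent lemma, use unbiasedness and the variance bound to write $\E[\|\hat{g}_t\|^2]\leq \|\nabla f(x_t)\|^2+V^2$, then use $\eta_t\leq 1/L$ to absorb the $L\eta_t^2/2$ coefficient and rearrange. Your explicit conditioning-then-tower-property bookkeeping is slightly more careful than the paper's direct expectation, but the argument is the same.
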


\begin{proof}%(of {\bf Lemma \ref{lem:nonconvex}})
Recall the SGD iterations $x_{t+1} = x_t - \eta_t \hat{g}_t$, where $\E[\hat{g}_t] = \nabla f(x_t)$. By the smoothness of $f$ on $\R^d$,  we have
\begin{align}
f(x_{t+1}) & \leq f(x_t) + \left\langle \nabla f(x_t), x_{t+1} - x_t \right\rangle + \frac{L}{2}\left\|x_{t+1} -x_t \right\|^2 \notag \\
& \leq f(x_t) + \left\langle \nabla f(x_t), -\eta_t \hat{g}_t \right\rangle + \frac{L}{2}\left\|x_{t+1} -x_t \right\|^2 \notag \\
\label{equ:lem:nonconvex:1} & \leq f(x_t) -\eta_t \left\langle \nabla f(x_t), \hat{g}_t \right\rangle  + \frac{L\eta_t^2}{2}\left\|\hat{g}_t\right\|^2.
\end{align}
The stochastic gradient oracle is variance-bounded by $V^2$, i.e.,  $\E[\left\| \hat{g}_t - \nabla f(x_t)\right\|^2] \leq V^2$. Taking expectation on both sides of \eqref{equ:lem:nonconvex:1} and applying $\E[\hat{g}_t] = \nabla f(x_t)$  and the variance-bounded assumption gives
\begin{align}
\E[f(x_{t+1})] & \leq \E[f(x_t)] - \eta_t \E[\left\|\nabla f(x_t)\right\|^2] + \frac{L\eta_t^2}{2}\E[\left\|\hat{g}_t \right\|^2] \notag \\
& \leq \E[f(x_t)] - \eta_t \E[\left\|\nabla f(x_t)\right\|^2] + \frac{L\eta_t^2}{2}\E[\left\|\hat{g}_t -\nabla f(x_t) + \nabla f(x_t)\right\|^2] \notag \\\
& \leq \E[f(x_t)] - \eta_t \E[\left\|\nabla f(x_t)\right\|^2] + \frac{L\eta_t^2}{2}\left(\E[\left\|\hat{g}_t -\nabla f(x_t)\right\|^2 + \E[\left\|\nabla f(x_t)\right\|^2]\right) \notag \\\
& \leq \E[f(x_t)] + \left(-\eta_t + \frac{L\eta_t^2}{2}\right)\E[\left\|\nabla f(x_t)\right\|^2] + \frac{L\eta_t^2}{2}\E[\left\|\hat{g}_t -\nabla f(x_t)\right\|^2 \notag \\\
\label{inequ:nonconvex:1} & \leq \E[f(x_t)] + \left(-\eta_t + \frac{L\eta_t^2}{2}\right) \E[\left\|\nabla f(x_t)\right\|^2] + \frac{LV^2\eta_t^2}{2}.
\end{align}
where the second inequality follows the fact that gradient $\hat{g}_t$ is unbiased ($\E[\hat{g}_t] = \nabla f(x_t)$). We thus have that
\begin{align*}
 \E[\left\|\hat{g}_t-\nabla f(x_t)+ \nabla f(x_t) \right\|^2] & = \E[\left\|\hat{g}_t-\nabla f(x_t)\right\|^2 + \left\|\nabla f(x_t)\right\|^2 +2\left\langle \hat{g}_t-\nabla f(x_t), \nabla f(x_t)\right\rangle ] \\
 & = \E[\left\|\hat{g}_t-\nabla f(x_t)\right\|^2] + \E[\left\|\nabla f(x_t)\right\|^2].  
\end{align*}
Since $\eta_0 \leq 1/L$, we have $ - \eta_t + \eta_t^2L/2 \leq -\eta_t/2$. Using this inequality, shifting $\E[\left\|\nabla f(x_t)\right\|^2]$ to the left side, and re-arranging \eqref{inequ:nonconvex:1}  gives
\begin{align*}
\frac{\eta_t}{2}\E[\left\|\nabla f(x_t)\right\|^2] \leq \E[f(x_t)] - \E[f(x_t)] + \frac{LV^2\eta_t^2}{2}.
\end{align*}
\end{proof}

\begin{proof}(of {\bf Proposition \ref{pro:1sqrt}})
By Lemma \ref{lem:nonconvex}, the following inequality holds for SGD
\begin{align}\label{equ:sqrt:1}
\frac{\eta_t}{2}\E[\left\|\nabla f(x_t)\right\|^2] \leq \E[f(x_t)] - \E[f(x_{t+1})] + \frac{LV^2\eta_t^2}{2}.
\end{align}
Under the diminishing step-size $\eta_t = \eta_0/\sqrt{t}$, we estimate the summation of $\eta_t$ and $\eta_t^2$ from $t=1$ to $T$, respectively:
\begin{align*}
 \sum_{t=1}^{T}\eta_t =  \eta_0\sum_{t=1}^{T}\frac{1}{\sqrt{t}} & \geq \eta_0\int_{t=1}^{T}\frac{1}{\sqrt{t}}  dt = 2\eta_0(\sqrt{T}-1) \\ 
 \sum_{t=1}^{T}\eta_t^2 =\eta_0^2\sum_{t=1}^{T}\frac{1}{t} & \leq \eta_0^2\left(1 + \int_{t=1}^{T}\frac{1}{t}dt \right) = \eta_0^2(\ln T+1).
\end{align*}
Recall that the output $\hat{x}_T$ is chosen randomly from the sequence $\left\lbrace x_t \right\rbrace_{t=1}^{T}$ with probability $P_t = \frac{\eta_t}{\sum_{t=1}^{T}\eta_t}$. We thus have
\begin{align*}
\E[\left\|\nabla f(\hat{x}_T)\right\|] & = \frac{\eta_t\E[\left\|\nabla f(x_t)\right\|^2]}{\sum_{t=1}^{T}\eta_t} \notag\\
& \leq \frac{2\sum_{t=1}^{T}\E[f(x_t) -\E[f(x_{t+1})]]}{\sum_{t=1}^{T}\eta_t} + \frac{LV^2\sum_{t=1}^{T}\eta_t^2}{\sum_{t=1}^{T}\eta_t} \notag \\
& \leq \frac{2(f(x_1) - f^{\ast})}{\sum_{t=1}^{T}\eta_t}  + \frac{LV^2\sum_{t=1}^{T}\eta_t^2}{\sum_{t=1}^{T}\eta_t} \notag \\
& \leq \frac{f(x_1)-f^{\ast}}{\eta_0(\sqrt{T}-1)} + \frac{LV^2\eta_0(\ln T +1 )}{2(\sqrt{T}-1)}.
\end{align*}
\end{proof}

%\ref{thm:nonconvex:1}
\begin{proof}(of {\bf Theorem \ref{thm:nonconvex:1}}) In this case, we analyze the convergence of Algorithm \ref{alg:nonconvex} with $S= 2T/\log_{\alpha} T$ and $N = \log_{\alpha} T/2$. Invoking the result of Lemma \ref{lem:nonconvex}, at the current iterate $x_{i}^t$, the following inequality holds:
\begin{align*}
\frac{\eta_t}{2}\E[\left\|\nabla f(x_i^t)\right\|^2] \leq \E[f(x_i^t)] - \E[f(x_{i+1}^t)] + \frac{LV^2\eta_t^2}{2},
\end{align*}
Dividing both sides by $\eta_t^2/2$ yields
\begin{align}\label{inequ:nonconvex:2}
\frac{1}{\eta_t}\E[\left\|\nabla f(x_i^t)\right\|^2] \leq \frac{2}{\eta_t^2}\left(\E[f(x_i^t)] - \E[f(x_{i+1}^t)]\right) + LV^2
\end{align}
At each inner phase for $i \in [S]$, the step-size $\eta_t$ is a constant. Applying \eqref{inequ:nonconvex:2} repeatedly for $i=1,2,\cdots S$ gives
\begin{align}\label{inequ:nonconvex:3}
\frac{1}{\eta_t}\sum_{i=1}^{S}\E[\left\|\nabla f(x_i^t)\right\|^2] & \leq \frac{2}{\eta_t^2}\left(\E[f(x_1^t)] - \E[f(x_{S+1}^t)]\right) +  LV^2\cdot S
\end{align}
Since the output $\hat{x}_T$ of Algorithm \ref{alg:nonconvex} is randomly chosen from all  previous iterates $\left\lbrace x_i^t\right\rbrace$ with probability $P_i^t = \frac{1/\eta_t}{S\sum_{l=1}^{N}1/\eta_t} $,
\begin{align}
\E[\left\| \nabla f(\hat{x}_T)\right\|^2] & = \frac{1}{S\sum_{t=1}^{N}(1/\eta_t)}\sum_{t=1}^{N}\frac{1}{\eta_t} \sum_{i=1}^{S}\left\|\nabla f(x_i^t)\right\|^2 \notag \\
& \leq \frac{1}{S\sum_{t=1}^{N}(1/\eta_t)}\sum_{t=1}^{N}\frac{2}{\eta_t^2}\left(\E[f(x_1^t)] - \E[f(x_{S+1}^t)]\right) + \frac{LV^2\cdot S}{S\sum_{t=1}^{N}1/\eta_t} \notag \\
\label{eqn:nonconvex:4} & \leq \frac{2\eta_0}{S\eta_0^2\sum_{t=1}^{N}\alpha^{t-1}}\sum_{t=1}^{N}\alpha^{2(t-1)}\left(\E[f(x_1^t)] - \E[f(x_{S+1}^t)]\right) + \frac{LV^2\eta_0}{\sum_{t=1}^{N}\alpha^{t-1}}. 
\end{align}
By the update rule in Algorithm~\ref{alg:nonconvex}, the last point $x_{S+1}^t$ of the $t^{\text{th}}$ loop is the starting point of the next loop, i.e., $x_{S+1}^t = x_1^{t+1}$ for each $t \in [N]$. Applying the assumption that the objective function is upper bounded by $f_{\max} > 0$ for all $x\in\R^d$, 
%, i.e., $f(x) \leq f_{\max}$ for any $x \in \R^d$, then
\begin{align*}
\sum_{t=1}^{N}\alpha^{2(t-1)}\left(\E[f(x_1^t)] - \E[f(x_{S+1}^t)]\right)&  \leq  f(x_1^1) + (\alpha^2-1)\sum_{t=2}^{N}\alpha^{2(t-2)}\cdot \E[f(x_1^{t})] \leq \frac{f_{\max} }{\alpha^2}\cdot T.
\end{align*}
Plugging this inequality into \eqref{eqn:nonconvex:4} and substituting  $N = \log_{\alpha} T/2$ and $S  =2T/\log_{\alpha} T$, we  obtain that 
\begin{align*}
\E[\left\| \nabla f(\hat{x}_T)\right\|^2] & \leq \frac{2\eta_0 f_{\max} T }{\eta_0^2\alpha^2( 2T/\log_{\alpha} T)\cdot\frac{(\sqrt{T}-1)}{(\alpha-1)}}+ \frac{LV^2\eta_0}{ \frac{(\sqrt{T}-1)}{\alpha-1}} \\
& \leq \frac{ (\alpha-1)f_{\max}}{\eta_0\alpha^2}\cdot \frac{\log_{\alpha} T}{\sqrt{T}-1} + \frac{LV^2\eta_0(\alpha-1)}{\sqrt{T}-1}.
\end{align*}
 Therefore, by changing the base $\alpha $ of $\log_{\alpha} T$ to be natural logarithm, the theorem is proved.
\end{proof}

% %
%Similarly, we employ the analysis framework of Theorem \ref{thm:nonconvex:1} to prove the convergence of exponential decay step-size %\citep{li2020exponential}.

\begin{proof}(of {\bf Theorem \ref{thm:exp:nonconvex}})
 Based on Lemma \ref{lem:nonconvex}, at the current iterate $x_t$, we have
\begin{align*}
\frac{\eta_t}{2}\E[\left\|\nabla f(x_t)\right\|^2] \leq \E[f(x_t)] - \E[f(x_{t+1})] + \frac{LV^2\eta_t^2}{2}.
\end{align*}
Dividing the above inequality by $\eta_t^2/2$ and summing over $t=1$ to $T$ gives
\begin{align}\label{exp:inequ:1}
\sum_{t=1}^{T}\frac{1}{\eta_t} \E[\left\|\nabla f(x_t)\right\|^2] \leq \sum_{t=1}^{T}\frac{2(\E[f(x_t)] - \E[f(x_{t+1})])}{\eta_t^2} + LV^2T
\end{align}
Applying the assumption that the objective function $f$ is upper bounded by $f_{\max}$ and recalling the definition of the exponential decay step-size~\citep{li2020exponential}, i.e., $\eta_t = \eta_0/\alpha^t$ where $ \alpha = (\beta/T)^{-1/T}$ and $ \beta \geq 1$, we find
\begin{align}
\sum_{t=1}^{T}\frac{2(\E[f(x_t)] - \E[f(x_{t+1})])}{\eta_t^2} & \leq  \frac{2f(x_1)}{\eta_1^2} + 2\sum_{t=2}^{T}\left(\frac{1}{\eta_t^2}- \frac{1}{\eta_{t-1}^2}\right)\E[f(x_t)] \notag \\
& \leq \frac{2f(x_1)}{\eta_1^2} + \frac{2}{\eta_0^2}\left(\alpha^2-1\right)\sum_{t=2}^{T}\alpha^{2(t-1)} f_{\max} \notag\\
\label{exp:inequ:2} & \leq  \frac{2f_{\max}}{\eta_0^2}\cdot \left(\frac{T}{\beta}\right)^2.
\end{align}
Next, we estimate the sum of $1/\eta_t$ from $t=1$ to $T$:
\begin{align}
\sum_{t=1}^{T}\frac{1}{\eta_t} & = \frac{1}{\eta_0}\sum_{t=1}^{T}\alpha^t = \frac{\alpha(1-\alpha^T)}{\eta_0(1-\alpha)}  = \frac{(\frac{T}{\beta}-1)}{\eta_0(1-1/\alpha)}  \notag \\
\label{exp:inequ:3} &  \geq \frac{(\frac{T}{\beta}-1)}{\eta_0\ln(\alpha)} = \frac{(\frac{T}{\beta}-1)T}{\eta_0\ln(\frac{T}{\beta})}.
\end{align}
where the last inequality follows from the fact that $1-x \leq \ln(\frac{1}{x})$ for all $x > 0$.

Combining the selection rule for $\hat{x}_T$ with 
%Recalling the output $\hat{x}_T$ which is randomly chosen from the sequence of $x_t$ for $1 \leq t \leq T$ with probability $P_t = \frac{1/\eta_t}{\sum_{t=1}^{T}1/\eta_t}$ and applying 
inequalities \eqref{exp:inequ:1}, \eqref{exp:inequ:2} and \eqref{exp:inequ:3}, we have
\begin{align*}
\E[\left\| \nabla f(\hat{x}_T)\right\|^2] & =  \frac{1}{\sum_{t=1}^{T}1/\eta_t}\left[\sum_{t=1}^{T}\frac{1}{\eta_t}\E[\left\|\nabla f(x_t)\right\|^2] \right] \\
& \leq \frac{\eta_0\ln(\frac{T}{\beta})}{(\frac{T}{\beta}-1)T}\left[\frac{2f_{\max}}{\eta_0^2}\left(\frac{T}{\beta}\right)^2 + LV^2T\right].
\end{align*}
Letting $\beta = \sqrt{T}$, we see that 
\begin{align*}
\E[\left\| \nabla f(\hat{x}_T)\right\|^2] \leq \frac{\eta_0\ln T}{2(\sqrt{T}-1)}\left[\frac{2f_{\max}}{\eta_0^2} + LV^2\right],
\end{align*}
which concludes the proof.
\end{proof}

\begin{proof}(of {\bf Theorem \ref{thm:sqrt}})
In this case, we consider the diminishing step-size $\eta_t = \eta_0/\sqrt{t}$.
By Lemma \ref{lem:nonconvex}:
\begin{align}\label{thm:equ:sqrt:1}
\frac{\eta_t}{2}\E[\left\|\nabla f(x_t)\right\|^2] \leq \E[f(x_t)] - \E[f(x_{t+1})] + \frac{LV^2\eta_t^2}{2}.
\end{align}
In the same way as Theorems \ref{thm:nonconvex:1} and \ref{thm:exp:nonconvex}, we 
divide \eqref{thm:equ:sqrt:1} by $\eta_t^2/2$ and sum over $t=1$ to $T$ to obtain
\begin{align}\label{thm:equ:sqrt:2}
\sum_{t=1}^{T}\frac{1}{\eta_t} \E[\left\|\nabla f(x_t)\right\|^2] & \leq \sum_{t=1}^{T}\frac{2(\E[f(x_t)] - \E[f(x_{t+1})])}{\eta_t^2} + LV^2T \notag \\
& \leq \frac{2}{\eta_0^2}\left[f(x_1) + \sum_{t=2}^{T}(t-(t-1))\E[f(x_t)]\right] + LV^2T \notag \\
& \leq \frac{2Tf_{\max}}{\eta_0^2} + LV^2T.
\end{align}
Recalling that the output $\hat{x}_{T}$ is randomly chosen from the sequence $\left\lbrace x_t \right\rbrace_{t=1}^{T}$ with probability $P_t = \frac{1/\eta_t}{\sum_{t=1}^{T}1/\eta_t}$ and applying \eqref{thm:equ:sqrt:2}, yields
\begin{align*}
\E[\left\|\nabla f(\hat{x}_T) \right\|^2]  = \frac{\sum_{t=1}^{T}1/\eta_t\E[\left\|\nabla f(x_t) \right\|^2]}{\sum_{t=1}^{T} 1/\eta_t }  & \leq  \frac{1}{\sum_{t=1}^{T} 1/\eta_t }\left[\frac{2Tf_{\max}}{\eta_0^2} + LV^2T\right] \notag \\
& \leq \left(\frac{3f_{\max}}{\eta_0} + \frac{3LV^2\eta_0}{2}\right)\cdot \frac{1}{\sqrt{T}}.
\end{align*}
where the last inequality holds because $\sum_{t=1}^{T}1/\eta_t \geq 1/\eta_0\cdot \int_{t=0}^{T}\sqrt{t}dt = \frac{2}{3\eta_0}T^{3/2}$. 
The proof is complete.
\end{proof}

\section*{B. Proofs of Section \ref{sec:convex}}
%\ref{thm:convex:lastloop} We follow the inequality \eqref{eqn:nonsmooth:last-iterate:1} in the proof of Theorem \ref{thm:nonsmooth:average}. 
\begin{proof}(of {\bf Theorem 4.1})
The convexity of $f$ yields 
$\left\langle {g}_i^t, x-x_i^t\right\rangle \leq f(x)-f(x_i^t)$ for any $x \in \mathcal{X}$, where $g_i^t \in \partial f(x_i^t)$. Also, by convexity of $\mathcal{X}$, we have $\left\|\Pi_{\mathcal{X}}(u)-v\right\| \leq \left\|u-v\right\|$ for any points $u \in \R^d$ and $v \in \mathcal{X}$. Using these inequalities and applying the assumption that the stochastic gradient oracle is bounded by $G^2$, i.e., $\E[\left\|\hat{g}_i^t \right\|^2] \leq G^2$, for any $x \in \mathcal{X}$, we have
\begin{align}\label{equ:convex:0}
\E[\left\| x_{i+1}^t - x\right\|^2] & = \E[\left\| \Pi_{\mathcal{X}}(x_{i}^t - \eta_t \hat{g}_i^t) - x\right\|^2] \leq \E[\left\| x_{i}^t - \eta_t \hat{g}_i^t - x\right\|^2] \notag \\
& \leq \E[\left\| x_{i}^t- x\right\|^2] -2\eta_t \E[\left\langle {g}_i^t, x_i^t-x\right\rangle] + \eta_t^2G^2 \notag \\
& \leq \E[\left\| x_{i}^t- x\right\|^2] -2\eta_t[f(x_i^t)- f(x)] + \eta_t^2G^2
\end{align}
Shifting $[f(x_i^t)- f(x)]$ to the left side gives
\begin{align}\label{eqn:convex:1}
2\eta_t \E[f(x_i^t)-f(x)] \leq \E[\left\| x_{i}^t- x\right\|^2] - \E[\left\| x_{i+1}^t - x\right\|^2] + \eta_t^2 G^2.
\end{align}
Now, consider the final phase $N$ (let $t=N$) and $x=x^{\ast}$ and apply \eqref{eqn:convex:1}  recursively  from $i=1$ to $S$  to obtain
\begin{align}
2\eta_{N} \sum_{i=1}^{S}\E[f(x_i^{N})-f^{\ast}] & \leq \E[\left\|x_1^{N}-x^{\ast}\right\|^2]   + S\eta_{N}^2G^2.
\end{align}
Combining the assumption that  $\sup_{x,y\in \mathcal{X}}\left\|x-y\right\|^2 \leq D^2$ for some finite $D$ with the expression for the step-size in the final phase, $\eta_{N} = \eta_0\alpha/\sqrt{T}$, we have
\begin{align}
\frac{\sum_{i=1}^{S} \E[f(x_i^{N})]}{S} - f^{\ast} &  \leq \frac{\E\left[\left\|x_1^{N}-x^{\ast}\right\|^2\right]}{2\eta_{N}S} + \frac{G^2\eta_{N}}{2} \notag \notag \\
\label{thm:convex:part1}& \leq \frac{D^2}{4\eta_0\alpha}\cdot \frac{\log_{\alpha} T}{\sqrt{T}} + \frac{G^2\eta_0\alpha}{2}\cdot \frac{1}{\sqrt{T}}.
\end{align}
We have thus proven the first part of Theorem \ref{thm:convex:lastloop}. Next, based on the above results, we prove the error bound for the last iterate.
\iffalse
Therefore, we complete the proof.
\end{proof}
% \ref{thm:convex:last-iterate}
\begin{proof}(of {\bf Theorem \ref{thm:convex:last-iterate} })
\fi
 We focus on the last phase $N$ and 
 %let $x=x_{{S}-k}^{N}$ in \eqref{eqn:convex:1} for $k \in \left\lbrace 0,1,2,\cdots, S-1\right\rbrace$.
 % Then 
 apply \eqref{eqn:convex:1} recursively from $i=S-k$ to $S$ to find
\begin{align}\label{equ:convex:thm1:1}
2\eta_{N} \sum_{i={S}-k}^{S}\E[f(x_i^{N})-f(x_{{S}-k}^{N})] \leq \E\left[\left\|x_{{S}-k}^{N}-x_{{S}-k}^{N}\right\|^2\right] - \E\left[\left\|x_{{S}+1}^{N}-x_{{S}-k}^{N}\right\|^2\right] + (k+1)\eta_{N}^2G^2.
\end{align}
Introduce $W_{k+1}^{N} := \frac{1}{k+1}\sum_{i={S}-k}^{S}\E[f(x_i^{N})] $. Inequality \eqref{equ:convex:thm1:1} implies that 
\begin{align}\label{eqn:convex:last}
-f(x_{{S}-k}^{N}) \leq -W_{k+1}^{N} + \frac{\eta_{N} G^2}{2}.
\end{align}
By the definition of $W_{k+1}^{N} $, we have  $(k+1)W_{k+1}^{N}-kW_{k}^{N} = f(x_{{S}-k}^{N})$. Using this formula and applying \eqref{eqn:convex:last} gives
\begin{align*}
kW_{k}^{N} & = (k+1)W_{k+1}^{N} - f(x_{{S}-k}^{N}) \leq (k+1)W_{k+1}^{N} - W_{k+1}^{N} + \frac{\eta_{N}G^2}{2}.
\end{align*}
Dividing by $k$, we get
\begin{align*}
W_{k}^{N} \leq W_{k+1}^{N} + \frac{\eta_{N}G^2}{2k}.
\end{align*}
Using the above inequality repeatedly for $k=1,2,\cdots, S-1$, we have
\begin{align}
W_1^{N} \leq W_{S}^{N}  + \frac{\eta_{N}G^2}{2}\sum_{k=1}^{S-1}\frac{1}{k} \leq W_{S}^{N} + \frac{G^2\eta_0\alpha }{2\sqrt{T}}\left(\ln\left(\frac{2T}{\log_{\alpha} T}\right) + 1\right).
\end{align}
Recalling the definition of $W_{S}^{N}$ and applying \eqref{thm:convex:part1} %the result of Theorem \ref{thm:convex:lastloop} 
into the above inequality, we obtain \begin{align*}
 \E[f(x_{S}^{N})] - f^{\ast} \leq    \frac{D^2}{4\eta_0\alpha}\cdot \frac{\log_{\alpha} T}{\sqrt{T}} + \frac{G^2\eta_0\alpha}{2}\cdot \frac{\ln T }{\sqrt{T}} + \frac{G^2\eta_0\alpha(1+\ln(2)/2)}{\sqrt{T}}.
\end{align*}
The proof is complete.
\end{proof}

\section*{C. Proofs of Section \ref{sec:sc}}
%\ref{thm:sc}

\begin{proof}(of {\bf Theorem \ref{thm:sc} })
In this case, we consider the step decay step-size (see Algorithm \ref{alg:convex}) with $N = T/\log_{\alpha} T$ and $S = \log_{\alpha} T$.
By the $\mu$-strongly convexity of $f$ on $\mathcal{X}$, we have 
\begin{align}
\label{eqn:sc2} f(x^{\ast}) & \geq  f(x_i^t) + \left\langle g_i^t, x^{\ast}-x_i^t\right\rangle + \frac{\mu}{2}\left\|x_i^t-x^{\ast}\right\|^2, \,\text{and \,} \\
\label{eqn:sc1} f(x_i^t) & \geq f(x^{\ast})  + \left\langle g^{\ast}, x_i^t-x^{\ast}\right\rangle + \frac{\mu}{2}\left\|x_i^t-x^{\ast}\right\|^2, \forall g^{\ast} \in \partial f(x^{\ast}).
\end{align}
Due to the fact that $x^{\ast}$ minimizes $f$ on $\mathcal{X}$, we have $\left\langle g^{\ast}, x-x^{\ast}\right\rangle \geq 0$ for all $g^{\ast} \in \partial f(x^{\ast})$ and $x \in \mathcal{X}$.
In particular, for $x=x_i^t$ we have $\left\langle g^{\ast}, x_i^t-x^{\ast}\right\rangle \geq 0$.  Plugging this into \eqref{eqn:sc1} and re-arranging \eqref{eqn:sc2} and \eqref{eqn:sc1} gives
\begin{equation} 
\left\langle g_i^t, x_i^t - x^{\ast}\right\rangle \geq \frac{\mu}{2}\left\|x_{i}^t - x^{\ast}\right\|^2 + f(x_i^t) - f(x^{\ast}) \geq \mu\left\|x_{i}^t - x^{\ast}\right\|^2.
\end{equation}
By the convexity of $\mathcal{X}$, we have $\left\|\Pi_{\mathcal{X}}(u) -v \right\|^2 \leq \left\|u-v\right\|^2$ for any $u\in \R^d$ and $v \in \mathcal{X}$. Then applying the update rule of Algorithm \ref{alg:convex} and using these inequalities gives 
\begin{align}
\E\left[\left\|x_{i+1}^t - x^{\ast}\right\|^2 \mid  \mathcal{F}_i^t \right]& = \E\left[\left\|\Pi_{\mathcal{X}}(x_i^t - \eta_t \hat{g}_i^t )- x^{\ast} \right\|^2 \mid  \mathcal{F}_i^t \right] \leq \E\left[\left\|x_i^t - \eta_t \hat{g}_i^t- x^{\ast} \right\|^2 \mid  \mathcal{F}_i^t \right] \notag \\
& \leq \left\|x_i^t - x^{\ast} \right\|^2 - 2\eta_t \E\left[\left\langle \hat{g}_i^t, x_i^t - x^{\ast}\right\rangle \mid \mathcal{F}_i^t\right] + \eta_t^2\E\left[\left\|\hat{g}_i^t\right\|^2 \mid \mathcal{F}_i^t\right] \notag\\
& \leq \left\|x_i^t - x^{\ast} \right\|^2 - 2\eta_t \left\langle g_i^t, x_i^t - x^{\ast}\right\rangle + \eta_t^2G^2 \notag \\
\label{eqn:key:1} & \leq (1-2\mu\eta_t)\left\|x_i^t - x^{\ast} \right\|^2  + \eta_t^2G^2,
\end{align}
where the third inequality follows from the stochastic gradient oracle is bounded by $G^2$, i.e.,  $\hat{g}_i^t$ satisfies that $\E[\hat{g}_i^t] = g_i^t \in \partial f(x_i^t)$ and $\E[\left\|\hat{g}_i^t\right\|^2] \leq G^2$.
In this case, the time horizon $T$ is divided into ${N}=\log_{\alpha} T $ phases and each is of length ${S}=T/\log_{\alpha} T$. Recursively applying \eqref{eqn:key:1} from $i=1$ to ${S}$ in the $t^{\text{th}}$ phase and using the assumption $\eta_0 < 1/(2\mu)$ gives
\begin{align}
\E\left[\left\|x_{S+1}^t - x^{\ast}\right\|^2\right] & \leq  (1-2\mu\eta_t)^{S}\E\left[\left\|x_1^t - x^{\ast} \right\|^2 \right] + G^2\eta_t^2\sum_{l=0}^{S-1} (1-2\mu\eta_t)^l \notag \\
\label{eqn:perloop} & \leq  (1-2\mu\eta_t)^{S}\E\left[\left\|x_1^t - x^{\ast} \right\|^2 \right]  + \frac{G^2}{2\mu}\cdot \eta_t.
\end{align}
Repeating the recursion \eqref{eqn:perloop} from $t=1$ to $N$, we get
\begin{align}
\E\left[\left\|x_{{S}+1}^{N} - x^{\ast}\right\|^2\right]  
& \leq \mathop{\Pi}\limits_{t=1}^{N}(1-2\mu\eta_t)^{S}\left\|x_1^1 - x^{\ast} \right\|^2 + \frac{G^2}{2\mu}\sum_{t=1}^{N} \eta_t \cdot \mathop{\Pi}\limits_{l > t}^{N} (1-2\mu\eta_{l})^{S} \notag\\
& \leq \exp\left(-2\mu {S}\sum_{t=1}^{N}\eta_t\right)\left\|x_1^1 - x^{\ast} \right\|^2 +  \frac{G^2}{2\mu}\sum_{t=1}^{N} \eta_t \exp\left(-2\mu {S}\sum_{l>t}^{N}\eta_{l}\right), \label{eqn:core:1}
\end{align}
where the second inequality follows from the fact that $(1+x)^{s} \leq \exp(sx)$ for any $x\in \R$ and $s >0$. Recalling the formula for the step decay step-size, $\eta_t = \eta_0/\alpha^{t-1}$ in the $t^{\text{th}}$ phase, and that $N=\log_{\alpha} T$ and ${S}= T/\log_{\alpha} T$, we can estimate the two sums that appear in \eqref{eqn:core:1} as follows:
\begin{align*}
{S}\sum_{t=1}^{N}\eta_t  = \frac{T}{\log_{\alpha} T}\cdot\frac{\eta_0(1-\alpha^{-N})}{1-1/\alpha}& = \frac{\eta_0\alpha}{(\alpha-1)}\cdot\frac{T-1}{\log_{\alpha} T} \\
    {S}\sum_{l>t}^{N}\eta_l = \frac{T}{\log_{\alpha} T}\cdot \frac{\eta_0\alpha^{-t}(1-\alpha^{-(N-t)})}{(1-1/\alpha)} & = \frac{\eta_0\alpha}{(\alpha-1)}\cdot\frac{T\alpha^{-t}-1}{\log_{\alpha} T}.
\end{align*}
Using these inequalities in \eqref{eqn:core:1} gives
\begin{align}\label{eqn:core:2}
 \E\left[\left\|x_{{S}+1}^{N} - x^{\ast}\right\|^2\right]  & \leq \exp\left(- \frac{2\mu\eta_0\alpha}{\alpha-1} \cdot\frac{T-1}{\log_{\alpha} T}\right)\left\|x_1^1 - x^{\ast} \right\|^2  + \frac{G^2\eta_0}{2\mu}\sum_{t=1}^{N} \frac{1}{\alpha^{t-1}} \exp\left(- \frac{2\mu\eta_0\alpha}{\alpha-1}\cdot\frac{ T\alpha^{-t}-1}{\log_{\alpha} T} \right).
\end{align}
Next, we turn to bound the right-hand side of \eqref{eqn:core:2}. Let $t^{\ast} := \max\left\lbrace 0,  \bigfloor{ \log_{\alpha}\left(\frac{2\mu\eta_0\alpha}{\alpha-1}\cdot\frac{T}{\log_{\alpha} T}\right)}\right\rbrace$. If $t^{\ast} \geq 1$, we prefer to divide the second term into two parts. First, we estimate this term for $t\leq t^{\ast}$:
\begin{align}
\frac{G^2\eta_0}{2\mu} \sum_{t=1}^{t^{\ast}} \frac{1}{\alpha^{t-1}} \exp\left(- \frac{2\mu\eta_0\alpha}{\alpha-1}\cdot\frac{ T\alpha^{-t}-1}{\log_{\alpha} T} \right) 
& \leq  \frac{G^2\eta_0}{2\mu}\sum_{t=1}^{t^{\ast}} \frac{1}{\alpha^{t-1}} \exp\left(- \frac{\alpha^{t^{\ast}}}{\alpha^{t}} + \frac{2\mu\eta_0\alpha}{(\alpha-1)\log_{\alpha} T}\right) \notag\\
&  \leq  \frac{G^2\eta_0\alpha\exp\left(\frac{2\mu\eta_0\alpha}{(\alpha-1)\log_{\alpha} T}\right)}{2\mu \alpha^{t^{\ast}}} \cdot \sum_{t=1}^{t^{\ast}} \frac{\alpha^{t^{\ast}}}{\alpha^t} \exp\left(- \frac{\alpha^{t^{\ast}}}{\alpha^{t}}\right)  \notag\\
  & \leq \frac{G^2\eta_0\alpha \exp\left(\frac{2\mu\eta_0\alpha}{(\alpha-1)\log_{\alpha} T}-1\right)}{2\mu \alpha^{t^{\ast}}} \notag \\ 
\label{eqn:tast:1} & \leq \frac{G^2(\alpha-1)\exp\left(\frac{\mu\eta_0\alpha}{(\alpha-1)\log_{\alpha} T}-1\right)}{2\mu^2} \cdot \frac{\log_{\alpha} T}{T},
\end{align}
where the third inequality uses that $\int_{x=1}^{+\infty} x \exp(-x)dx \leq 2/\exp(1)$.
Next, we estimate the term for $ t^{\ast}<t \leq N$:
\begin{align}
\frac{G^2\eta_0}{2\mu} \sum_{t=t^{\ast}+1}^{N} \frac{1}{\alpha^{t-1}} \exp\left(- \frac{2\mu\eta_0\alpha}{\alpha-1}\cdot\frac{ T\alpha^{-t}-1}{\log_{\alpha} T} \right) 
& \leq \frac{G^2\eta_0\alpha}{2\mu} \sum_{t=t^{\ast}+1}^{N} \frac{1}{\alpha^t} \exp\left(- \frac{\alpha^{t^{\ast}}}{\alpha^t} +\frac{2\mu\eta_0\alpha}{(\alpha-1)\log_{\alpha} T}\right) \notag \\
 &  \leq  \frac{G^2\eta_0\exp\left(\frac{2\mu\eta_0\alpha}{(\alpha-1)\log_{\alpha} T}\right)}{2\mu}\cdot \frac{\alpha(1-2/\exp(1))}{\alpha^{t^{\ast}}} \notag \\
\label{eqn:tast:2} & \leq \frac{G^2(\alpha-1)\exp\left(\frac{2\mu\eta_0\alpha}{(\alpha-1)\log_{\alpha} T}\right)(1-2/\exp(1))}{4\mu^2} \cdot \frac{\log_{\alpha} T}{T},
\end{align}
where the second inequality uses that $\int_{x=0}^{1}x\exp(-x)dx= 1-2/\exp(1)$.
Incorporating \eqref{eqn:tast:1} and \eqref{eqn:tast:2} into \eqref{eqn:core:2} gives
\begin{align*}
 \E\left[\left\|x_{{S}+1}^{N} - x^{\ast}\right\|^2\right] & \leq \frac{\left\|x_1^1 - x^{\ast} \right\|^2}{\exp\left( \frac{2\mu\eta_0\alpha}{\alpha-1} \cdot\frac{T-1}{\log_{\alpha} T}\right)}  + \frac{G^2(\alpha-1)\exp\left(\frac{2\mu\eta_0\alpha}{(\alpha-1)\log_{\alpha} T}\right)}{4\mu^2} \cdot \frac{\log_{\alpha} T}{T}.
\end{align*}
Changing the base $\alpha $ of $\log_{\alpha}$ to the natural logarithm, that is $\log_{\alpha} T = \ln T/\ln\alpha$, we arrive at the desired result.
\end{proof}

Before proving the lower bound of Algorithm \ref{alg:convex}, we state an utility lemma.

\begin{lemma}\citep{klein2015number}\label{lem:proba:1}  Let $X_1, X_2, \cdots, X_{K}$ be independent random variables taking values uniformly from $\left\lbrace -1, +1\right\rbrace$  and $X = \frac{1}{K}\sum_{i=1}^{K}X_i$. Suppose $2 \leq c \leq \frac{\sqrt{K}}{2}$, then
\begin{align*}
\mathbb{P}\left[X \geq \frac{c}{\sqrt{K}} \right] \geq \exp(-9c^2/2).
\end{align*}
\end{lemma}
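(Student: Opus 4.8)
The plan is to establish this anti-concentration bound by an exponential change-of-measure (tilting) argument. First I would translate it into a statement about a binomial count. Writing $W=\#\{i:X_i=+1\}$, we have $KX=\sum_i X_i=2W-K$, so the event $\{X\ge c/\sqrt{K}\}$ is exactly $\{W\ge \mu:=(K+c\sqrt{K})/2\}$, and $W\sim\mathrm{Binomial}(K,1/2)$ under $\mathbb{P}$. A tempting route is to lower bound the tail by its single largest term $\binom{K}{m}2^{-K}$ with $m=\lceil\mu\rceil$, but Stirling shows this loses a spurious factor of order $1/\sqrt{K}$ relative to the true tail, because the tail is a sum of order $\sqrt{K}$ comparable terms. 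Since $c$ may be as small as $2$ while $K$ is arbitrarily large, this lost factor is fatal, so I would instead integrate over a whole window of near-threshold terms.

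Concretely, I would introduce the tilted law $\mathbb{Q}$ under which the $X_i$ are i.i.d.\ with $\mathbb{Q}[X_i=+1]=q:=\tfrac12+\tfrac{c}{2\sqrt{K}}$; the hypothesis $c\le\sqrt{K}/2$ guarantees $q\le 3/4<1$, and $\mathbb{E}_{\mathbb{Q}}[W]=Kq=\mu$ aligns the tilted mean with the threshold. Set $A'=\{\mu\le W\le \mu+\sqrt{K}\}$, which satisfies $A'\subseteq\{W\ge\mu\}$, so it suffices to bound $\mathbb{P}[A']$ from below. I would show $\mathbb{Q}[A']\ge c_0$ for an absolute constant $c_0$ (say $c_0=1/4$): the variance of $W$ under $\mathbb{Q}$ is $Kq(1-q)\le K/4$, so the window width $\sqrt{K}$ is at least twice the standard deviation, and combining a one-sided (Cantelli) bound with the fact that the median of a binomial lies within one of its mean shows that the upper half of the window near the mean carries constant mass. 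Note that $c\ge 2$ forces $\mu+\sqrt{K}\le K$, so the window genuinely sits inside $\{0,\dots,K\}$.

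The heart of the argument is the identity $\mathbb{P}[A']=\mathbb{E}_{\mathbb{Q}}\!\big[(d\mathbb{P}/d\mathbb{Q})\,\mathbf{1}_{A'}\big]$. For a configuration with $w$ ones the likelihood ratio is $d\mathbb{P}/d\mathbb{Q}=2^{-K}q^{-w}(1-q)^{-(K-w)}$, whose logarithm is \emph{decreasing} in $w$ because $q>1/2$; hence on $A'$ it is minimized at $w_{\max}=\mu+\sqrt{K}$, giving $\mathbb{P}[A']\ge \mathbb{Q}[A']\cdot(d\mathbb{P}/d\mathbb{Q})(w_{\max})\ge c_0\,(d\mathbb{P}/d\mathbb{Q})(w_{\max})$. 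Writing $q=\tfrac12+\delta$ with $\delta=\tfrac{c}{2\sqrt{K}}\le\tfrac14$, a short computation factors the log-likelihood ratio at $w_{\max}$ as $-K\,\mathrm{KL}(\tfrac12+\delta\,\|\,\tfrac12)-\sqrt{K}\,\ln\tfrac{1+2\delta}{1-2\delta}$. I would then insert the power-series bounds $\mathrm{KL}(\tfrac12+\delta\|\tfrac12)=\sum_{n\ge1}\tfrac{(2\delta)^{2n}}{2n(2n-1)}\le \tfrac{(2\delta)^2}{2(1-4\delta^2)}$ and $\ln\tfrac{1+2\delta}{1-2\delta}=2\,\mathrm{artanh}(2\delta)\le \tfrac{4\delta}{1-4\delta^2}$, both valid for $\delta\le1/4$ (so $1-4\delta^2\ge 3/4$), which yield $K\,\mathrm{KL}\le \tfrac23 c^2$ and $\sqrt{K}\ln(\cdots)\le \tfrac83 c$. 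Thus $(d\mathbb{P}/d\mathbb{Q})(w_{\max})\ge \exp(-\tfrac23 c^2-\tfrac83 c)$, and the last step is the elementary check that $c_0\exp(-\tfrac23 c^2-\tfrac83 c)\ge \exp(-\tfrac92 c^2)$ for all $c\ge2$; taking logarithms this reduces to $\tfrac{23}{6}c^2-\tfrac83 c-\ln 4\ge 0$, which holds with room to spare.

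The main obstacle is exactly the recovery of the $\sqrt{K}$ factor that single-term estimates discard: the tilting construction and the constant-probability window exist precisely to average over enough near-threshold terms, and obtaining a clean \emph{absolute} lower bound $c_0$ on $\mathbb{Q}[A']$, uniform in $K$ and in $c\in[2,\sqrt{K}/2]$, is the only place demanding genuine care rather than routine series manipulation. The generous constant $9/2$ in the target exponent is what makes all the crude constants ($3/4$, $c_0=1/4$, and the KL/$\mathrm{artanh}$ bounds) affordable, so I would not attempt to optimize any of them.
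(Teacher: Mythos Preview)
The paper does not prove this lemma at all: it is stated with a citation to \citet{klein2015number} (their Lemma~4) and used as a black box in the proof of Theorem~\ref{thm:lower-bound}. So there is no ``paper's proof'' to compare against; you are supplying an independent proof of a result the authors quote from the literature.

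That said, your tilting argument is sound and is essentially the standard way to establish such anti-concentration (reverse Chernoff) bounds. The translation to the binomial count, the choice $q=\tfrac12+\tfrac{c}{2\sqrt{K}}$ so that $\mathbb{E}_{\mathbb{Q}}[W]$ sits at the threshold, the monotonicity of the likelihood ratio in $w$, and the series bounds on $\mathrm{KL}(\tfrac12+\delta\,\|\,\tfrac12)$ and $\mathrm{artanh}(2\delta)$ are all correct, and the final numerical check $\tfrac{23}{6}c^2-\tfrac83 c-\ln 4\ge 0$ for $c\ge 2$ holds comfortably. The only step that genuinely needs a line or two more is the uniform lower bound $\mathbb{Q}[A']\ge c_0$: your sketch (median within one of the mean plus a Cantelli/Chebyshev tail cut at $\mu+\sqrt K$) is the right idea, but as written it does not quite pin down $c_0=\tfrac14$ when $\mu$ is non-integer; you may need to invoke a known bound such as $\mathbb{Q}[W\ge \mathbb{E}_{\mathbb{Q}} W]\ge \tfrac14$ for binomials, or simply accept a slightly smaller absolute constant, which the slack in the exponent $\tfrac92$ easily absorbs. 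Your observation that the hypothesis $2\le c\le \sqrt K/2$ forces $K\ge 16$, hence $\mu+\sqrt K\le K$, is correct and worth stating explicitly.
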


The lemma was proposed by \citet{klein2015number} (see lemma 4) to show the tightness of the Chernoff bound. Recently it has been used to derive a high probability lower bound of the $1/t$ step-size \citep{harvey2019simple}.
We will now use Lemma \ref{lem:proba:1} to prove the following high probability lower bound of the step decay scheme in Algorithm \ref{alg:convex} for $S = T/\log_{\alpha} T$.

\begin{proof}(of {\bf Theorem \ref{thm:lower-bound}})
We consider the one-dimensional function $\tilde{f}_{T}(x) = \frac{1}{2}x^2$, where $x \in \mathcal{X}=[-4, 4]$. This function is $1$-strongly convex and $1$-smooth on $\mathcal{X}$. For any point $x_i^t$, the gradient oracle will return a gradient $x_i^t -z_i^t$ where $\E[z_i^t]=0$.  We apply the step decay step-size with $S =T/\log_{\alpha} T $ to $\tilde{f}_T$ using $x_1^1 = 0$ and $\eta_0=1$. Then the last iterate satisfies
\begin{align}
x_{{S}+1}^{N} = \sum_{t=1}^{N}\sum_{i=1}^{{S}} \eta_t (1-\eta_t)^{{S}-i} \Pi_{l > t}^{N}(1-\eta_l)^{{S}}z_i^t.
\end{align}
Letting $t^{\ast} = \log_{\alpha} T - \log_{\alpha}\log_{\alpha} T +1$, we have that $\eta_{t^{\ast}} = \eta_0/\alpha^{t^{\ast}-1} = \log_{\alpha}(T)/T$ and $(1-\eta_{t^{\ast}})^{{S}} = \exp(-1)$. For $t \neq t^{\ast}$ and $i \in [{S}]$, we pick $z_i^t = 0$. Then the final iterate $x_{{S}+1}^{N}$ can be estimated as
\begin{align*}
x_{{S}+1}^{N}  & = \eta_{t^{\ast}}\Pi_{l > t^{\ast}}^{N}(1-\eta_l)^{{S}}\sum_{i=1}^{{S}} (1-\eta_{t^{\ast}})^{{S}-i}z_i^t  \geq \frac{\log_{\alpha} T}{\exp(2)T}\sum_{i=1}^{{S}} (1-\eta_{t^{\ast}})^{{S}-i}z_i^{t^{\ast}}.
\end{align*}
For $\nu_i^{\ast} =(1-\eta_{t^{\ast}})^{{S}-i} $, it holds that $ \exp(-1) < \nu_i^{\ast} < 1$ for all $i\in [{S}]$. Define $ z_i^{t^{\ast}} = X_i^{\ast} / \nu_i^{\ast} $ where $X_i^{\ast}$ is uniformly chosen from $\left\lbrace -1, +1\right\rbrace$. Then $|z_i^{t^{\ast}}|\leq \exp(1) \in \mathcal{X}$ for any $i\in [{S}]$. Hence, this gradient oracle satisfies the assumptions. Now,
%We can see that this gradient oracle satisfies the assumptions.
%Thus we have
\begin{align}
x_{{S}+1}^{N}  \geq \frac{\log_{\alpha} T}{\exp(2)T}\sum_{i=1}^{{S}}X_i^{\ast} = \frac{1}{\exp(2)} \left(\frac{\log_{\alpha} T}{T} \sum_{i=1}^{{S}}X_i^{\ast} \right).
\end{align}
Invoking Lemma \ref{lem:proba:1} with $c = \sqrt{2\ln(1/\delta)}/3$ and $K = T/\log_{\alpha} T$ gives
\begin{align}
f(x_{{S}+1}^{N}) = \frac{1}{2}(x_{{S}+1}^{N})^2 \geq \frac{1}{2}\left(\frac{1}{\exp(2)}\frac{\sqrt{2\ln(1/\delta)}}{3\sqrt{T/\log_{\alpha} T}} \right)^2 = \frac{ \ln(1/\delta)}{9\exp(2)\ln\alpha}\cdot \frac{\ln T}{T}
\end{align}
with probability at least $\delta > 0$. Since we know  the optimal function value $\tilde{f}_{T}^{\ast}  = \tilde{f}_{T}(x^{\ast}) = 0$, the the desired high probability lower bound follows.\end{proof}

\begin{proof}(of {\bf Theorem \ref{sec:thm:last-iterate}})
Recalling the iterate updates of Algorithm \ref{alg:convex}, for any $x \in \mathcal{X}$, we have
\begin{align}
\E\left[\left\|x_{i+1}^{t} - x\right\|^2\right] &  = \E\left[\left\|\Pi_{\mathcal{X}}(x_{i}^{t} - \eta_t \hat{g}_i^t) - x\right\|^2 \right] \leq \E\left[\left\|x_{i}^{t} - \eta_t \hat{g}_i^t - x\right\|^2\right]  \notag \\
& \leq \E[\left\|x_{i}^{t}- x\right\|^2] - 2\eta_t\E[\left\langle g_i^t, x_i^t-x\right\rangle] + \eta_t^2\E\left[\left\|g_i^t\right\|^2\right] \notag \\
\label{eqn:nonsmooth:last-iterate:1} & \leq \E[\left\|x_{i}^{t}- x\right\|^2] - 2\eta_t\E[\left\langle g_i^t, x_i^t-x\right\rangle] + \eta_t^2G^2.
\end{align}
where the second inequality follows since $\left\|\Pi_{\mathcal{X}}(u) - v \right\| \leq \left\|u-v\right\|$ for any $v \in \mathcal{X}$ and the third inequality follows from the fact that the gradient oracle is bounded and unbiased.
In the following analysis, we focus on the final phase, that is $t = {N}$. Let $k$ be the integer in $\left\lbrace 0, 1, 2, \cdots, {S}-1\right\rbrace$. Extracting the inner product and summing over all
$i$ from ${S}-k$ to ${S}$ gives
\begin{align}\label{eqn:last-iterat}
\sum_{i={S}-k}^{{S}}\E\left[ \left\langle g_i^{N}, x_i^{N}-x\right\rangle\right] & \leq \frac{1}{\eta_{N}}\E\left[\left\|x_{{S}-k}^{{N}}- x\right\|^2\right] + (k+1)\eta_{N} G^2.
\end{align}
By the convexity of $f$ on $\mathcal{X}$, we have $\E[f(x) -f(x_i^{N})] \geq \E\left[\left\langle g_i^{N}, x-x_i^{N}\right\rangle\right]$. Plugging this into \eqref{eqn:last-iterat},  we get
\begin{equation}\label{inequ:lem:avg:1}
\E\left[\frac{1}{k+1}\sum_{i={S}-k}^Sf(x_i^{N}) - f(x)\right] \leq  \frac{1}{(k+1)\eta_{N}}\E\left[\left\|x_{{S}-k}^{{N}}- x\right\|^2\right] + \eta_{N} G^2.
\end{equation}
We pick  $x = x_{{S}-k}^{N}$ in \eqref{inequ:lem:avg:1} to find
\begin{equation}\label{eqn:last-iterate:core}
\frac{1}{k+1}\sum_{i={S}-k}^{S}\E[f(x_i^{N})] - \E[f(x_{{S}-k}^{N})] \leq \eta_{{N}}G^2.
\end{equation}
Let $W_{k+1} = \frac{1}{k+1}\sum_{i={S}-k}^{{S}} \E[f(x_i^{N})]$ which is the average of the expected function values at the last $k+1$ iterations of the final phase ${N}$. The above inequality implies that 
\begin{equation*}
- f(x_{{S}-k}^{N}) \leq W_{k+1} + \eta_{N}G^2.
\end{equation*}
By the definition of $W_k$, we have $W_1=\E[f(x_S^{N})]$ and $kW_{k} = (k+1) W_{k+1} - \E[f(x_{{S}-k}^{N})]$. Using these formulas and applying \eqref{eqn:last-iterate:core} gives
\begin{align*}
kW_{k} = (k+1) W_{k+1} - f(x_{{S}-k}^{N})  \leq (k+1) W_{k+1} - W_{k+1}  + \eta_{N}G^2
\end{align*}
which, after dividing by $k$, yields
\begin{align*}
W_{k} \leq W_{k+1} + \frac{\eta_{N}G^2}{k}.
\end{align*}
Applying the above inequality recursively for $k=1,\cdots, {S}-1$, we get
\begin{equation}\label{inequ:func:core1}
W_1 = \E[f(x_{S}^{N})] \leq W_{{S}} + \eta_{N}G^2\sum_{k=1}^{{S}-1} \frac{1}{k} \leq  W_{S} + \eta_{N}G^2(\ln({S}-1)+1).
\end{equation}
It only remains to estimate $W_{{S}} $. At the ${N}^{\text{th}}$ phase, the iterate starts from $x_{1}^{{N}}$. We pick $x = x^{\ast}$ and $k={S}-1$ in \eqref{inequ:lem:avg:1} so that
\begin{equation}\label{inequ:lem:avg:2}
W_{S} := \E\left[\frac{1}{{S}}\sum_{i=1}^Sf(x_i^{N})\right] \leq f(x^{\ast}) +\frac{1}{\eta_NS}\E\left[\left\|x_{1}^{{N}}- x^{\ast}\right\|^2\right] + \eta_{N} G^2.
\end{equation}
Note that in order to estimate $W_{S}$, we have to bound $\E\left[\left\|x_{1}^{{N}}- x^{\ast}\right\|^2\right]$ first. From inequality \eqref{eqn:core:1} of Theorem \ref{thm:sc}, we know that the distance between the starting point $x_1^{N}$ of the ${N}^{\text{th}}$ phase and $x^{\ast}$ can be bounded as follows:
\begin{align}\label{eqn:last:n-1}
\E\left[\left\|x_{1}^{{N}}- x^{\ast}\right\|^2\right] & \leq \exp\left(-2\mu {S}\sum_{t=1}^{{N}-1}\eta_t\right)\left\|x_1^1 - x^{\ast} \right\|^2 +  \frac{G^2}{2\mu}\sum_{t=1}^{{N}-1} \eta_t \exp\left(-2\mu {S}\sum_{l>t}^{{N}-1}\eta_{l}\right). 
\end{align}
We now follow the proof of Theorem \ref{thm:sc} to estimate $\E\left[\left\|x_{1}^{{N}}- x^{\ast}\right\|^2\right]$.  Substituting the step-size $\eta_t = \eta_0/\alpha^{t-1}$ for $t \in [{N}]$, ${N}=\log_{\alpha} T$ and $ {S}= T/\log_{\alpha} T$, we have
\begin{align*}
{S}\sum_{t=1}^{{N}-1}\eta_t  = \frac{T}{\log_{\alpha}(T)}\frac{\eta_0(1-\alpha^{-{N}+1})}{1-1/\alpha}& = \frac{\eta_0\alpha}{(\alpha-1)}\cdot\frac{T-\alpha}{\log_{\alpha}(T)} \\
    {S}\sum_{l>t}^{{N}-1}\eta_l = \frac{T}{\log_{\alpha}(T)}\frac{\eta_0\alpha^{-t}(1-\alpha^{-({N}-t-1)})}{(1-1/\alpha)} & =  \frac{\eta_0\alpha}{\alpha-1}\cdot\frac{(T\alpha^{-t}-\alpha)}{\log_{\alpha}(T)}.
\end{align*}
Therefore, using these inequalities in  \eqref{eqn:last:n-1} gives
\begin{align*}
\E\left[\left\|x_{1}^{{N}}- x^{\ast}\right\|^2\right] & \leq \exp\left(- \frac{2\eta_0\mu\alpha}{\alpha-1} \cdot\frac{T-\alpha}{\log_{\alpha} T}\right)\left\|x_1^1 - x^{\ast} \right\|^2 + \frac{G^2(\alpha-1)\exp\left(\frac{2\mu\eta_0\alpha^2}{(\alpha-1)\log_{\alpha} T}\right)}{4\mu^2} \cdot \frac{\log_{\alpha} T}{T}.
\end{align*}
Incorporating the above results and substituting $\eta_{N} = \eta_0\alpha/T$ and ${S}=T/\log_{\alpha} T$ into \eqref{inequ:func:core1}, we have
\begin{align*}
\E[f(x_{S}^{N})] - f(x^{\ast}) & \leq \frac{\E\left[\left\|x_{1}^{{N}}- x^{\ast}\right\|^2\right]}{\eta_{N} {S}} + \eta_{N} G^2 + \eta_{N} G^2(\ln({S}-1)+1) \notag \\
& \leq \frac{\left\|x_1^1 - x^{\ast} \right\|^2 \log_{\alpha}T}{\eta_0\alpha\exp\left(\frac{2\eta_0\mu\alpha}{\alpha-1} \cdot\frac{T-\alpha}{\log_{\alpha} T}\right)} + \frac{G^2(\alpha-1)\exp\left(\frac{2\mu\eta_0\alpha^2}{(\alpha-1)\log_{\alpha} T}\right)}{4\mu^2\eta_0\alpha}\cdot \frac{\log_{\alpha}^2 T}{T} \\
& \vspace{0.4em} + \frac{G^2\eta_0\alpha (\ln(T)+2)}{T}.
\end{align*}
By changing the base of $\alpha$ to be natural logarithm, i.e., $\log_{\alpha} T = \ln T/\ln \alpha$, the proof is finished.
\end{proof}

\begin{proof}(of {\bf Theorem \ref{thm:nonsmooth:average}})
Recall inequality \eqref{eqn:nonsmooth:last-iterate:1} in the proof of Theorem \ref{sec:thm:last-iterate}: for any $x \in \mathcal{X}$ it holds that 
\begin{align*}
\E[\left\|x_{i+1}^{t} - x\right\|^2] \leq \E[\left\|x_{i}^{t}- x\right\|^2] - 2\eta_t\E[\left\langle g_i^t, x_i^t-x\right\rangle] + \eta_t^2G^2.
\end{align*}
By the convexity of $f$,  we have $\E[f(x)- f(x_i^t)] \geq \E[\left\langle {g}_i^t, x-x_i^t\right\rangle]$, so the above inequality implies that 
\begin{align}\label{eqn:thm:average:1}
2\eta_t\E[f(x_i^t) - f(x)] \leq \E[\left\|x_{i}^{t}- x\right\|^2]- \E[\left\|x_{i+1}^{t} - x\right\|^2] + \eta_t^2G^2.
\end{align}
Let $ t^{\ast}= \max\left\lbrace 0,  \bigfloor{ \log_{\alpha}\left(\frac{2\mu\eta_0\alpha^2}{\alpha-1}\cdot\frac{T}{\log_{\alpha} T}\right)}\right\rbrace$ and $x =x^{\ast}$. By applying \eqref{eqn:thm:average:1} repeatedly and summing over all $t^{\ast} \leq t \leq {N}$ and $i\in [{S}]$, we have
\begin{align}\label{eqn:thm:average:2}
\sum_{t=t^{\ast}}^{{N}}\eta_t \sum_{i=1}^{S} f(x_i^t)-f(x^{\ast}) \leq \E \left[\left\|x_1^{t^{\ast}} - x^{\ast}\right\|^2\right] + {S} G^2\sum_{t=t^{\ast}}^{{N}} \eta_t^2.
\end{align}
Let $$\hat{x}_T := \frac{ \sum_{t=t^{\ast}}^{{N}}\eta_t \sum_{i=1}^{S} x_i^t}{{S}\sum_{t=t^{\ast}}^{{N}}\eta_t}.$$ Since $\mathcal{X}$ is convex and each iterate $x_i^t $ belongs to $\mathcal{X}$, we have $\hat{x}_T \in \mathcal{X}$.
By the convexity of $f$ and \eqref{eqn:thm:average:2} it then follows that
\begin{align}
\E[f(\hat{x}_{T}) -f(x^{\ast})] = \E \left[\frac{ \sum_{t=t^{\ast}}^{{N}}\eta_t \sum_{i=1}^{S} f(x_i^t)}{{S}\sum_{t=t^{\ast}}^{{N}}\eta_t}\right]-f(x^{\ast}) &  \leq  \frac{ \sum_{t=t^{\ast}}^{{N}}\eta_t \sum_{i=1}^{S} \E[f(x_i^t)]}{{S}\sum_{t=t^{\ast}}^{{N}}\eta_t} -f(x^{\ast})\notag \\
\label{eqn:averg:key}& \leq \frac{\E \left[\left\|x_1^{t^{\ast}} - x^{\ast}\right\|^2\right]}{{S}\sum_{t=t^{\ast}}^{{N}}\eta_t} + \frac{{S} G^2\sum_{t=t^{\ast}}^{{N} }\eta_t^2}{{S}\sum_{t=t^{\ast}}^{{N}}\eta_t}. 
\end{align}
Next, we turn to estimate $\E \left[\left\|x_1^{t^{\ast}} - x^{\ast}\right\|^2\right]$. By \eqref{eqn:core:1} with ${N}=t^{\ast}-1$, we have 
\begin{align}
\E\left[\left\|x_1^{t^{\ast}}-x^{\ast}\right\|^2\right]  = \E\left[\left\|x_{{S}+1}^{t^{\ast}-1} - x^{\ast}\right\|^2\right] 
& \leq \mathop{\Pi}\limits_{t=1}^{t^{\ast}-1}(1-2\mu\eta_t)^{S}\left\|x_1^1 - x^{\ast} \right\|^2 + \frac{G^2}{2\mu}\sum_{t=1}^{t^{\ast}-1} \eta_t \mathop{\Pi}\limits_{l > t} (1-2\mu\eta_{l})^{S} \notag\\
\label{eqn:averag:2} & \leq \exp\left(-2\mu {S}\sum_{t=1}^{t^{\ast}-1}\eta_t\right)\left\|x_1^1 - x^{\ast} \right\|^2 +  \frac{G^2}{2\mu}\sum_{t=1}^{t^{\ast}-1} \eta_t \exp\left(-2\mu {S}\sum_{l>t}^{t^{\ast}-1}\eta_{l}\right) . 
\end{align}
Next, we estimate the summation of $\eta_l$ from $l=t+1$ to $l = t^{\ast}-1$ \begin{align}\label{eqn:averag:1}
{S}\sum_{l > t}^{t^{\ast}-1} \eta_l = {S}\cdot\frac{\frac{\eta_0}{\alpha^{t}}(1-(1/\alpha)^{t^{\ast}-t-1})}{ (1-1/\alpha)}= \frac{T\eta_0}{\log_{\alpha} T}\left(\frac{\frac{1}{\alpha^{t-1}} - \frac{1}{\alpha^{t^{\ast}-2}}}{\alpha-1}\right)
= \frac{\eta_0 }{\alpha-1}\cdot \frac{T}{\log_{\alpha} T \alpha^{t-1}} - \frac{1}{2\mu}.
\end{align}
Incorporating \eqref{eqn:averag:1} into the second term of \eqref{eqn:averag:2} gives
\begin{align}
\sum_{t=1}^{t^{\ast}-1} \eta_t \exp\left(-2\mu {S}\sum_{l>t}^{t^{\ast}-1}\eta_{l}\right) & = \eta_0\sum_{t=1}^{t^{\ast}-1} \frac{1}{\alpha^{t-1}} \exp\left(-\frac{2\mu\eta_0}{(\alpha-1)\alpha^{t-1}} \cdot\frac{T}{\log_{\alpha} T}  + 1\right) \notag \\
& = \frac{\eta_0\alpha^2\exp(1)}{\alpha^{t^{\ast}}}\sum_{t=1}^{t^{\ast}-1} \frac{\alpha^{t^{\ast}}}{\alpha^{t+1}}\exp\left(-\frac{\alpha^{t^{\ast}}}{\alpha^{t+1}}\right) \notag \\
 \label{eqn:averag:3} & \leq \frac{2\eta_0\alpha^2}{\alpha^{t^{\ast}}}
= \frac{(\alpha-1)}{\mu}\cdot \frac{\log_{\alpha} T}{T},
\end{align}
where the inequality follows from the fact that $\int_{x=1}^{+\infty} x \exp(-x)dx \leq 2/\exp(1)$.
Letting $t=0$ in \eqref{eqn:averag:1}, we have
\begin{align}\label{eqn:averag:4}
{S}\sum_{l=1}^{t^{\ast}-1} \eta_l = 
= \frac{\eta_0\alpha}{\alpha-1}\cdot\frac{T}{\log_{\alpha} T} - \frac{1}{2\mu}.
\end{align}
Plugging \eqref{eqn:averag:3} and \eqref{eqn:averag:4} into \eqref{eqn:averag:2}, we get
\begin{align}\label{eqn:x:ast}
\E\left[\left\|x_1^{t^{\ast}}-x^{\ast}\right\|^2\right] & \leq \frac{\left\|x_1^1 - x^{\ast} \right\|^2 }{\exp\left(\frac{2\mu\eta_0\alpha}{\alpha-1}\cdot\frac{T}{\log_{\alpha} T}-1\right)}+  \frac{G^2(\alpha-1)}{2\mu^2}\cdot \frac{\log_{\alpha} T}{T}.
\end{align}
Incorporating \eqref{eqn:x:ast} into \eqref{eqn:averg:key} and using  $\frac{\alpha-1}{2\mu\alpha} \leq {S}\sum_{t=t^{\ast}}^{{N}}\eta_t  \leq \frac{1}{2\mu}$ and ${S}\sum_{t=t^{\ast}}^{{N}}\eta_t^2 \leq \frac{\log_{\alpha} T}{4\mu^2 (\alpha+1) T} $ gives
\begin{align*}
\E[f(\hat{x}_{T}) -f(x^{\ast})] & \leq \frac{\E[\left\|x_1^{t^{\ast}} - x^{\ast}\right\|^2]}{{S}\sum_{t=t^{\ast}}^{{N}}\eta_t} + \frac{{S} G^2\sum_{t=t^{\ast}}^{{N} }\eta_t^2}{{S}\sum_{t=t^{\ast}}^{{N}}\eta_t} \notag \\
& \leq \frac{2\mu\alpha \E[\left\|x_1^{t^{\ast}} - x^{\ast}\right\|^2]}{\alpha-1} + \frac{\alpha G^2}{2\mu(\alpha^2-1)}\cdot\frac{\log_{\alpha} T}{T}\notag \\
& \leq \frac{2\mu\alpha}{\alpha-1}\cdot\frac{\left\|x_1^1 - x^{\ast} \right\|^2}{\exp\left(\frac{2\mu\eta_0\alpha}{\alpha-1}\cdot\frac{T}{\log_{\alpha} T}-1\right)}+  \frac{\alpha\left(1+\frac{1}{2(\alpha^2-1)}\right)G^2}{\mu}\cdot \frac{\log_{\alpha} T}{T}
\end{align*}
which concludes the proof.
\end{proof}

\section*{{\bf D. The Details of the Setup in Numerical Experiments}}

In this section, we provide some details for the numerical experiments in Section \ref{sec:numerical} and give some complementary experimental results. 

To better understand the relationship between all the considered step-sizes, we draw Figure \ref{fig:lr} to show the step-size $\eta_t$ ($y$-axis is $\log(\eta_t)$) versus the number of iterations (starting from the same initial step-size). In the left picture, we show the many step-sizes, studied in Sections \ref{experiment:mnist} and \ref{experiment:cifar}, which finally reach the order of $1/\sqrt{T}$ for the nonconvex and convex cases.  In the strongly convex case (the right picture), we show the step-sizes which are based on the order of $1/T$. We also add yet another kind of exponentially decaying step-size (called Exp(H-K-2014)) proposed by \citet{hazan2014beyond}: $\eta_i = \eta_0/2^i$,  $t \in [T_i, T_{i+1})$ and $T_{i+1} = 2T_i$, where $\sum_i T_i = T$.
\begin{figure}[ht]
 \vskip 0.1in
\begin{center}
\centerline{
\subfigure[nonconvex \& convex cases]{\label{nonconvex-stepsize} \includegraphics[width=0.3\textwidth,height=1.6in]{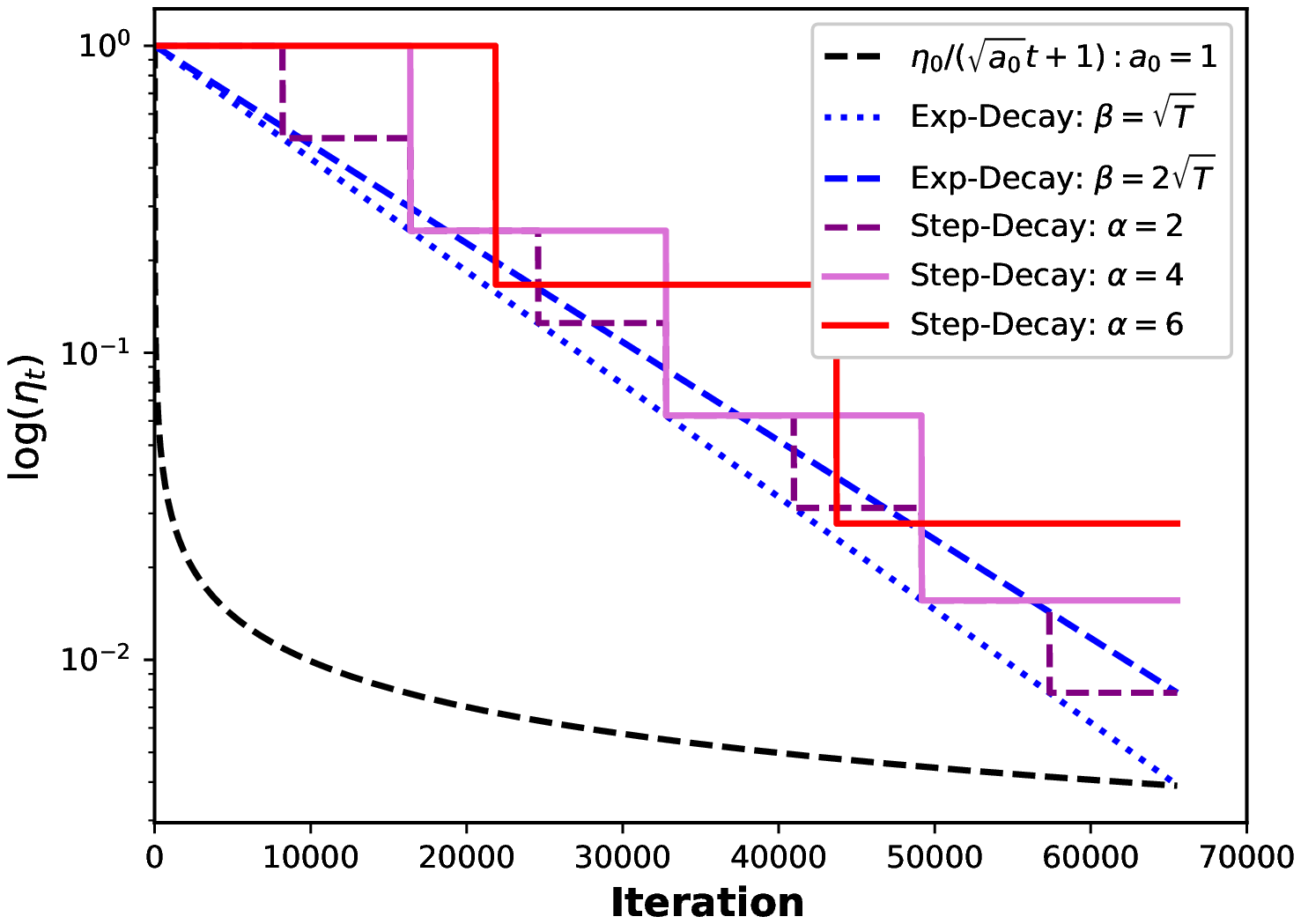} }
\subfigure[strongly convex case]{\label{sc-stepsize} \includegraphics[width=0.3\textwidth,height=1.6in]{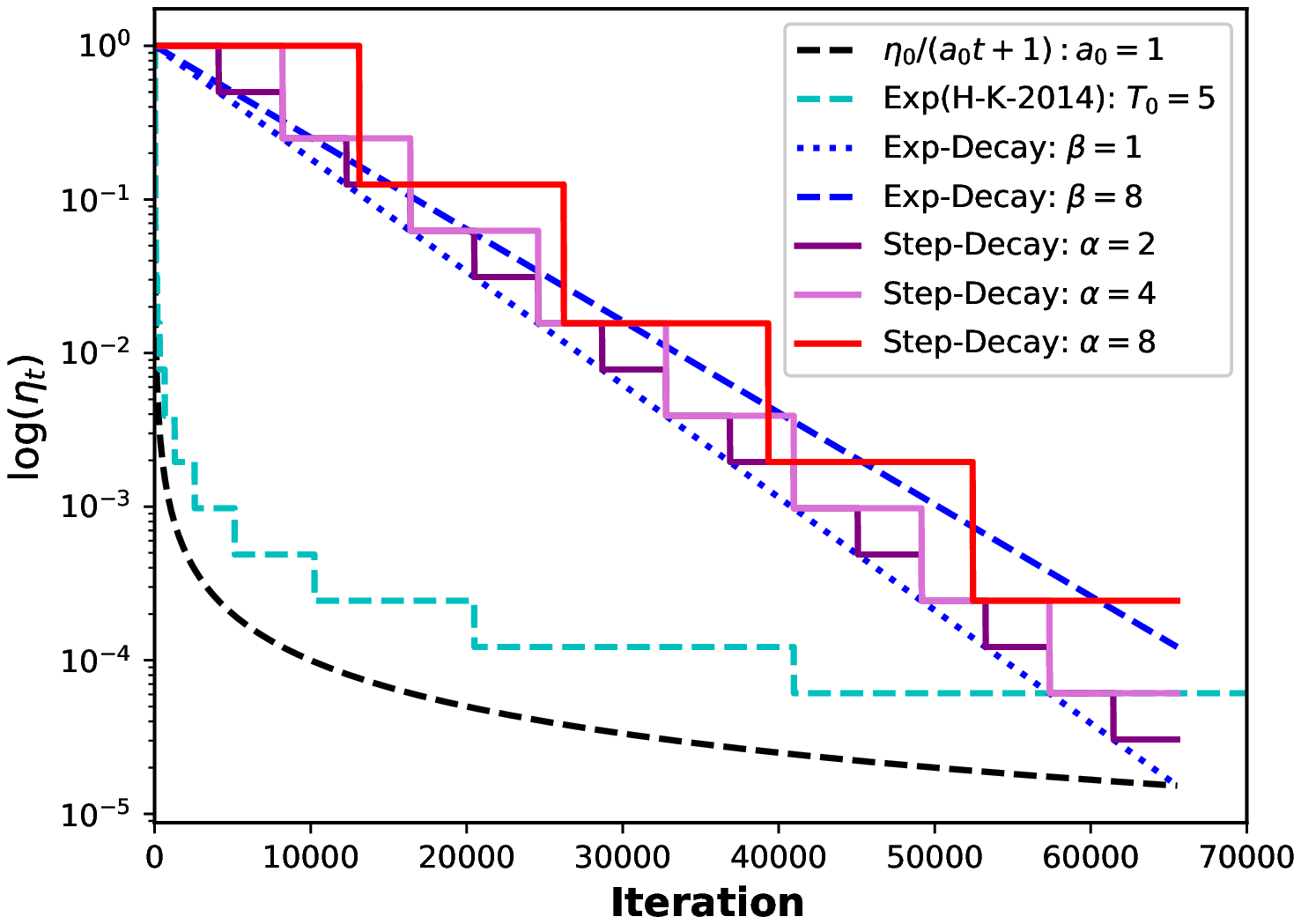}}
}
\caption{Step-sizes involved in the experiments}
\label{fig:lr} 
\end{center}
\vskip -0.1in
\end{figure}

From Figure \ref{nonconvex-stepsize}, we observe that Exp-Decay for $\beta = \sqrt{T}$ can be regarded as a lower bound of Step-Decay. From another viewpoint, we can see that when the decay factor $\alpha$ is very close  to 1, the proposed Step-Decay will reduce to Exp-Decay for $\beta = \sqrt{T}$. A similar relationship can also be observed from Figure \ref{sc-stepsize}.

\subsection*{D.1 The Details of the Experiments on  MNIST}

The MNIST dataset consists of a training set of 60,000 examples and a testing set of 10,000 examples. We train MNIST on a fully connected two-layer network (784-100-10). The $l_2$ regularization parameter is $10^{-4}$ and the mini-batch size is 128. We run 128 epochs, which implies that the number of iterations $T$ is equal to the training size $(60,000)$. 

In order to fairly compare the considered step-sizes, the initial step-size $\eta_0$ is chosen from the search grid  $ \left\lbrace 0.001, 0.005, 0.01, 0.05, 0.1, 0.5, 1, 5\right\rbrace$. For $1/t$ and $1/\sqrt{t}$, the initial step-size is $\eta_0 = 1$, and $a_0$ is tuned by searching for the final step-size $\eta_{T}$  over the grid $\left\lbrace 0.001, 0.005, 0.01, 0.05, 0.1, 0.5, 1, 5\right\rbrace$; in our experiments, it turned out that the best value of $\eta_T$ was 0.01. The best initial step-size $\eta_0$ was found to be $0.5$ for both Exp-Decay and Step-Decay. Similarly, the parameter $\beta$ for Exp-Decay is selected to make sure that its final step-size $\eta_{T}$ is tuned over the grid $\left\lbrace 0.001, 0.005, 0.01, 0.05, 0.1, 0.5, 1, 5\right\rbrace$; the best tuning of $\eta_T$ was found to be 0.05. For Step-Decay, the decay factor $\alpha $ is empirically chosen from an interval $(1, 12]$ and the search 
grid is in units of 1 after $\alpha \geq 2$. The outer-loop size $N$ is $\lfloor{\log_{\alpha}T/2}\rfloor$ which is numerically better than its ceil. The best choice was found to be $\alpha=7$ ($N=2$).  

\subsection*{\textbf{D.2 The Details of the Experiments on CIFAR10 and CIFAR100} }

The benchmark datasets CIFAR10 and CIFAR100 both consist of 60000 colour images (50000 training images and the rest 10000 images for testing). The maximum epochs called for the two datasets is 164 and batch size is $128$. 

First, we employ a 20-layer Resident Network model \citep{he2016deep}  called ResNet20 to train CIFAR10. We use vanilla SGD without dampening and a weight-decay of 0.0005. The hyper-parameters are selected to work best according to their performance on the test dataset. 

 For all evaluated step-sizes, the initial step-size $\eta_0\in\left\lbrace 0.0001, 0.0005, 0.001, 0.005, 0.01, 0.05, 0.1, 0.5, 1 \right\rbrace$. For the constant step-size, the best choice was achieved by $\eta_t=0.05$. For the $1/t$ step-size, the initial step-size $\eta_0 = 1$ and the parameter $a_0$ is tuned such that $\eta_{T}$ reaches the search grid $ \left\lbrace 0.0001, 0.0005, 0.001, 0.05, 0.01, 0.5, 0.1 \right\rbrace$ (the grid search yielded $\eta_{T} = 0.05$). We tuned the $1/\sqrt{t}$ step-size in the same way as the $1/t$ step-size: the initial step-size  $\eta_0 = 1$ and $\eta_T=0.05$. For Exp-Decay, $\eta_0 = 1$ and $\beta$ is chosen such that the final step-size $\eta_T$ reaches the search grid for step-size (resulting in $\eta_{T} = 0.01$). For Step-Decay, the best initial step-size is achieved at $\eta_0 = 0.5$ and $\alpha = 6$.

The numerical results on CIFAR10 is shown in Figure \ref{fig:stepdecay:cifar10}. 
We can see that the sudden jumps in step-size helps the algorithm to get a lower testing loss and higher accuracy (red curve) compared to other step-sizes. 
\begin{figure}[ht]
\vskip 0.1in
\begin{center}
\centerline{
\subfigure[Training loss]{\includegraphics[width=0.3\textwidth,height=1.8in]{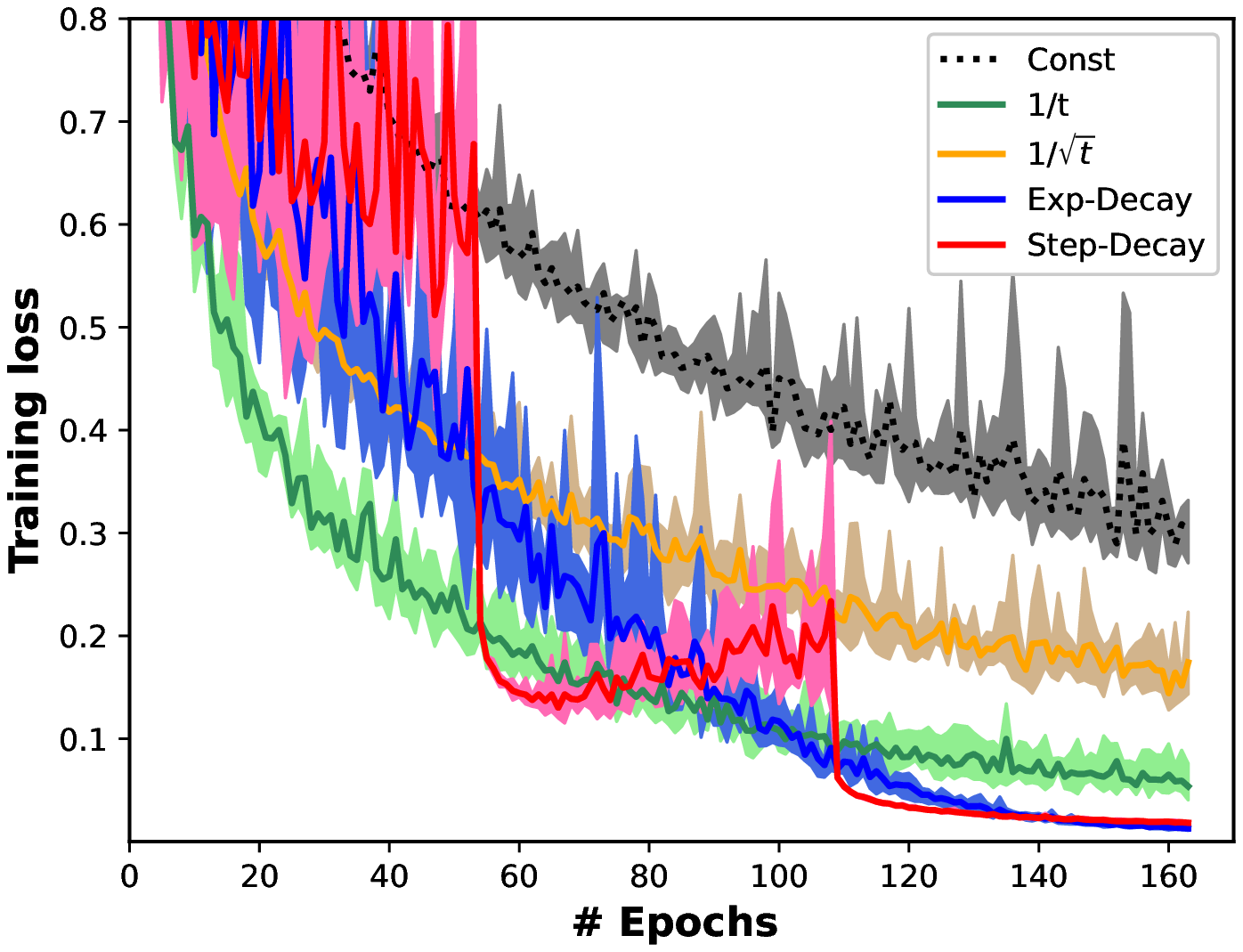} }
\subfigure[Testing loss]{\includegraphics[width=0.3\textwidth,height=1.8in]{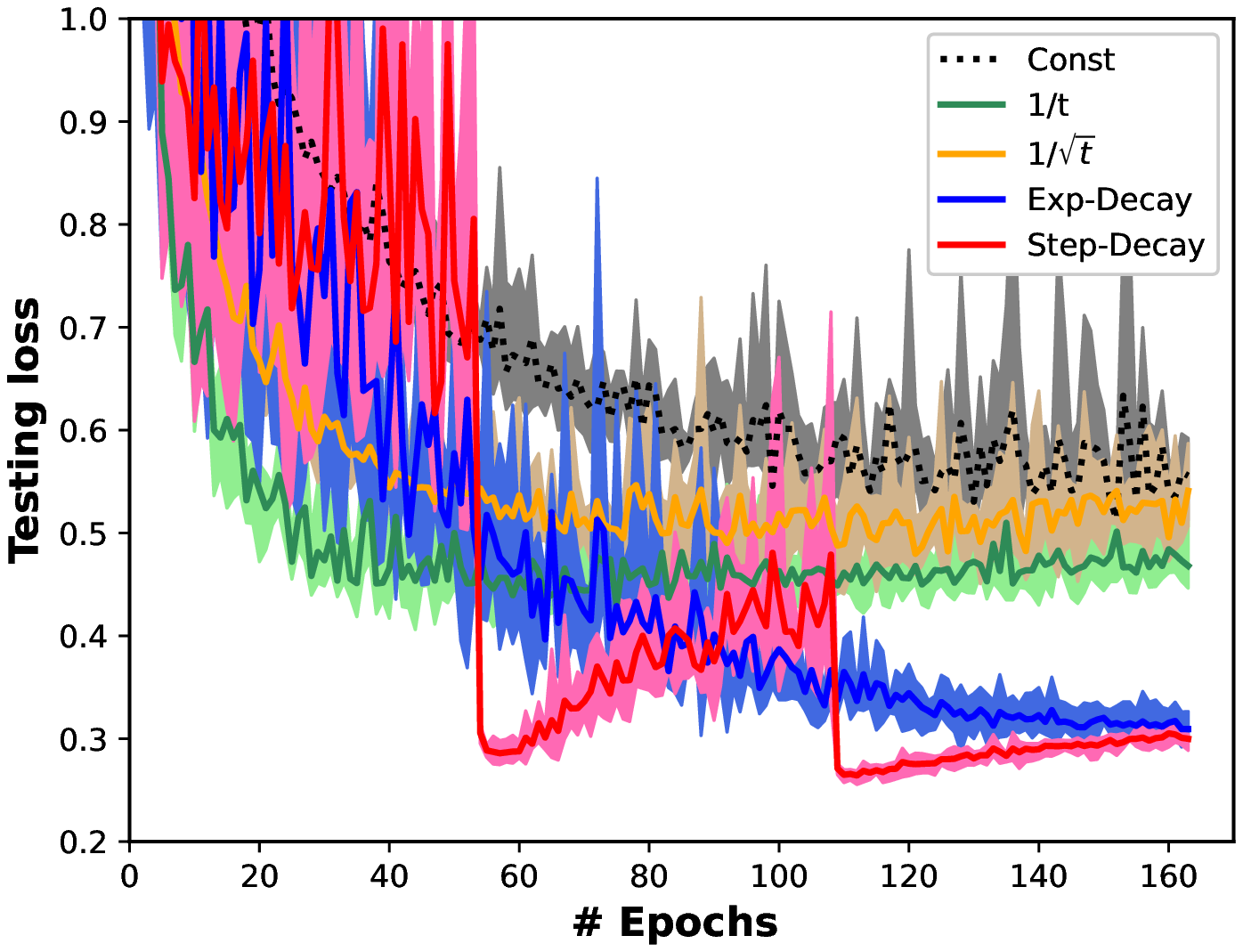} }
\subfigure[Testing accuracy]{\includegraphics[width=0.3\textwidth,height=1.6in]{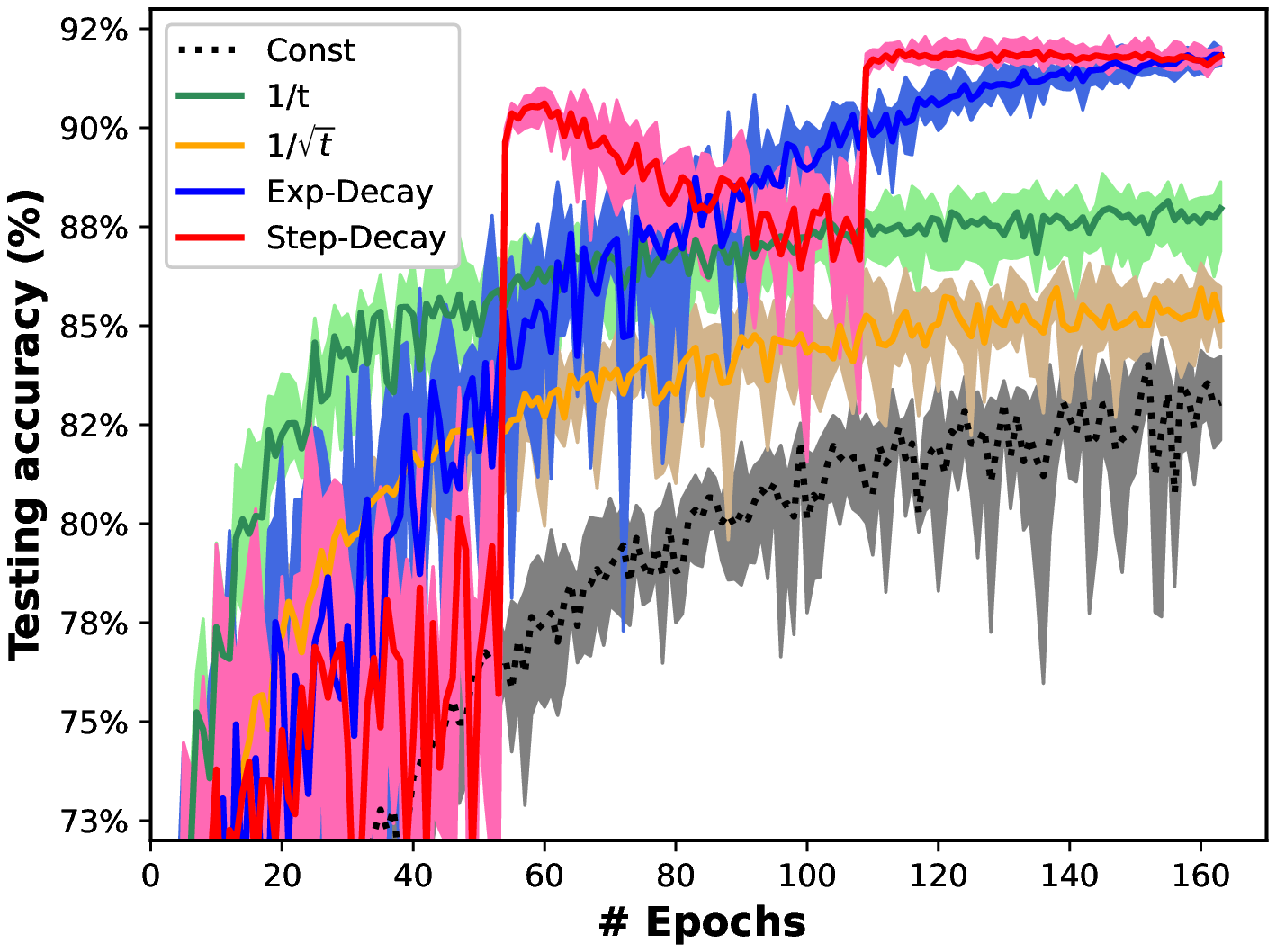}  }} 
\caption{Results on CIFAR10 - ResNet20}%: $\eta_0=0.01$ for Const; $\eta_0=1$ and $a_0=100/T$ for $1/t$; $\eta_0=1$ and $a_0=100/\sqrt{T}$ for $1/\sqrt{t}$; $\eta_0=1$ and $\beta=0.01*T$ for Exp-Decay; $\eta_0=0.5$ and $\alpha=6$ for Step-Decay.  }
\label{fig:stepdecay:cifar10}  
\end{center}
\vskip -0.2in
\end{figure}

Next, we give the details about how to select the optimal values of the parameters$\eta_0$, $a_0$, $\beta$ and $\alpha$ for CIFAR100. The initial step-size  is chosen from $ \left\lbrace 0.0001, 0.0005, 0.001, 0.005, 0.01, 0.05, 0.1, 0.5, 1 \right\rbrace$ for all step-sizes. 
For the constant step-size, we found $\eta_0=0.1$ and a weight-decay of 0.0001. For the $1/t$ step-size, $\eta_0 = 1$ and $a_0$ was set such that $\eta_{T}$ is searched over the grid above, which yielded $\eta_{T} = 0.01$. For $1/\sqrt{t}$, $\eta_0 = 1$ and $a_0$ is set to make sure that $\eta_{T}$ is tuned from the set for step-size, resulting in $\eta_T=0.01$. 
For Exp-Decay, $\eta_0 = 1$ and $\beta$ is chosen such that $\eta_T$ reaches the grid $\left\lbrace 0.0001, 0.0005, 0.001, 0.05, 0.01, 0.5, 0.1 \right\rbrace$ (resulting in $\eta_{T} = 0.01$). For the Step-Decay, the initial step-size $\eta_0 = 1$ and the decay factor $\alpha = 6$.
 
Next, we detail the parameters tuning for the algorithms in Table \ref{tab:cifar100:adap-grad}. The maximum epochs called was 164 and the batch size was 128. This implies that $T = 164\cdot T/128$. The weight-decay was set to 0.0005 for Adam and NAG. The best initial step-size for AdaGrad was found to be $\eta_0=0.05$ (weight-decay is 0); For Adam, we found the parameters ($\beta_1, \beta_2$)=(0.9, 0.99). The weight-decay was set to 0.025 for AdamW while the other parameters were the same as for Adam.  For NAG, the momentum parameter was set to 0.9. For Exp-Decay: the best-tuned $\beta$ was $0.005\cdot T$ and $\eta_0=0.1$ for NAG; $\beta=0.01 \cdot T$ and $\eta_0=0.005$ for Adam and AdamW; For Step-Decay: the optimal $\alpha$ was found to be 6 for all methods including Adam, AdamW and NAG while the best $\eta_0=0.05$ for NAG and $\eta_0=0.005$ for Adam and AdamW.

%The decay factor $\alpha$ is critical to the performance of step decay step-size. 
In Figure \ref{fig:cifar100:outer-loop}, we show how the number of outer-loop iterations $N$ changes with the decay factor $\alpha \in (1,12]$. The decay factor is an important hyper-parameter for Step-Decay. To figure out the decay factor affects the performance, we plot the testing loss and generalization error (the absolute value of the difference between training loss and testing loss), as well as the testing accuracy in Figures \ref{fig:cifar100:loss} and \ref{fig:cifar100:accuracy}, respectively. All results are repeated 5 times. The best choice of the decay factor is found to be $\alpha=6$, according to the best performance on testing loss and accuracy. It is observed that $\alpha \in [4, 6]$ performs better and is more stable than $\alpha \in [7, 12)$. The main reason is that the length of each phase for $\alpha \in [7, 12)$ is larger than that of $\alpha \in [4,6]$ so that it loses its advantages in the end (the generalization is weakened). Moreover, suppose that the number of outer-loop iterations $N$ is fixed, for example at $N=3$ (where $\alpha \in [4,6]$) or $N=2$ (where $\alpha \in [7,12]$), we can see that the testing loss is getting better if we increase the decay factor.
\begin{figure}[ht]
  \centering
   \vskip 0.1in
  \subfigure[\# Outer-loop]{ \label{fig:cifar100:outer-loop} \includegraphics[width=0.3\textwidth, height=1.8in]{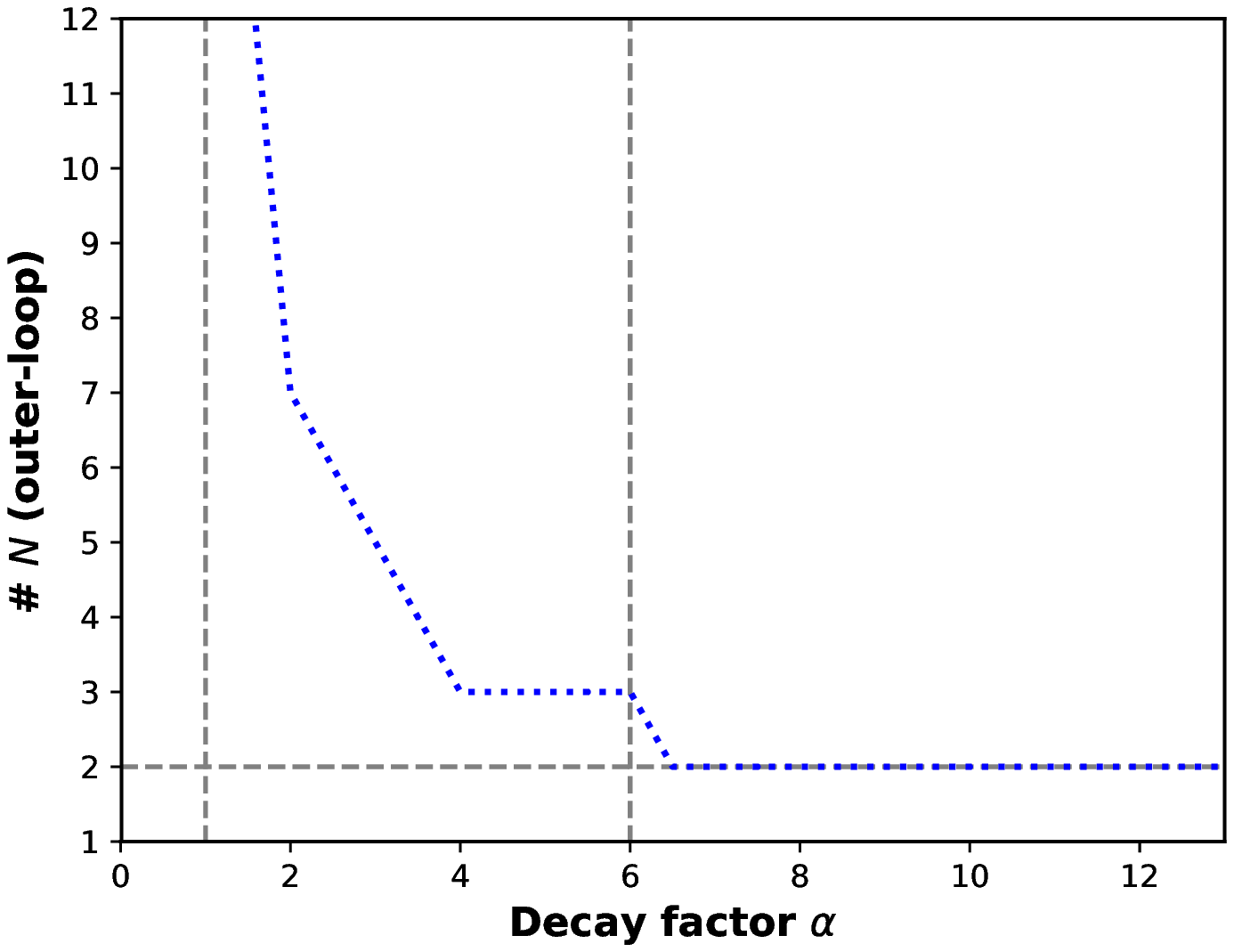}}
\subfigure[Loss]{ \label{fig:cifar100:loss} \includegraphics[width=0.3\textwidth,height=1.8in]{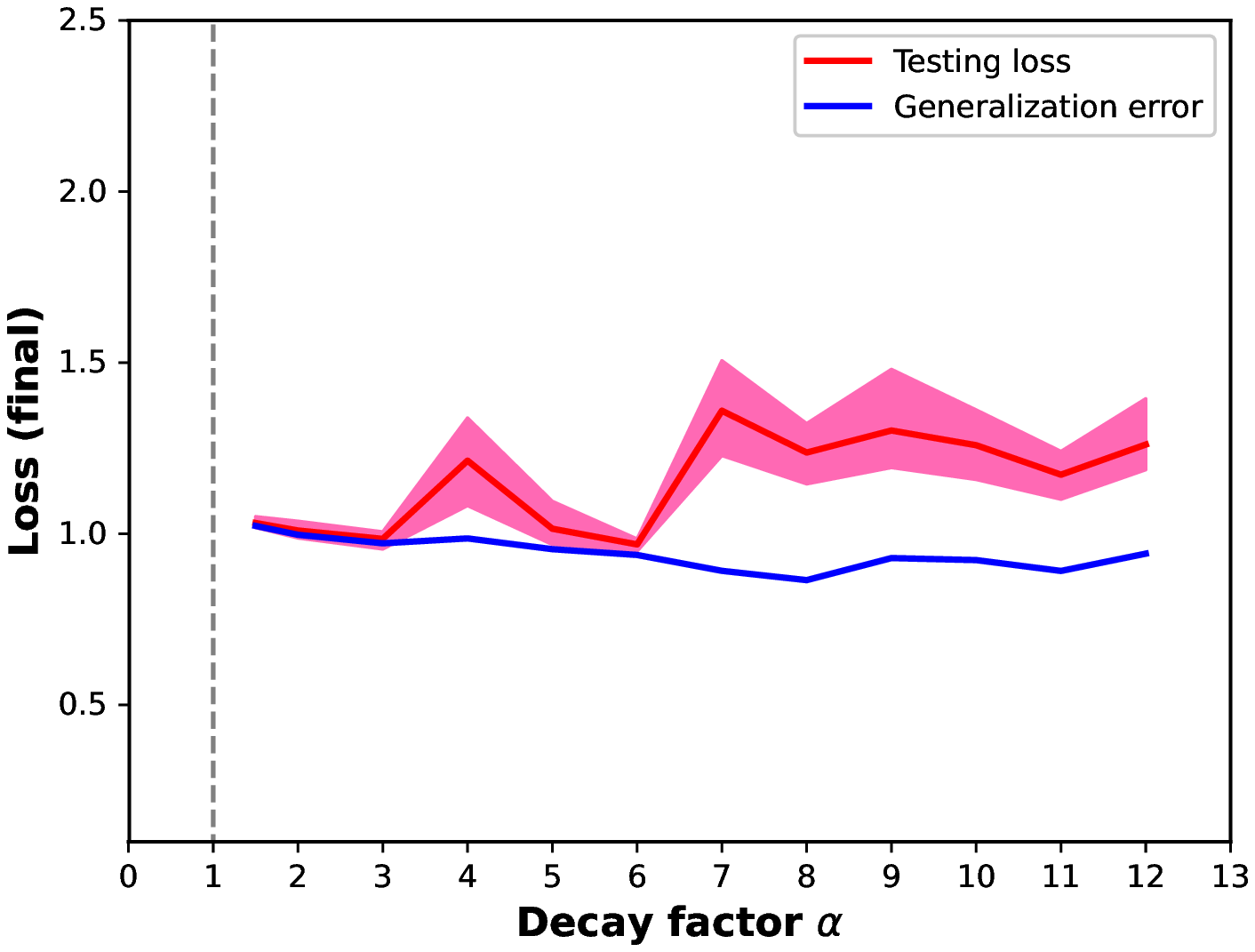}}
  \subfigure[Accuracy]{\label{fig:cifar100:accuracy} \includegraphics[width=0.3\textwidth,height=1.8in]{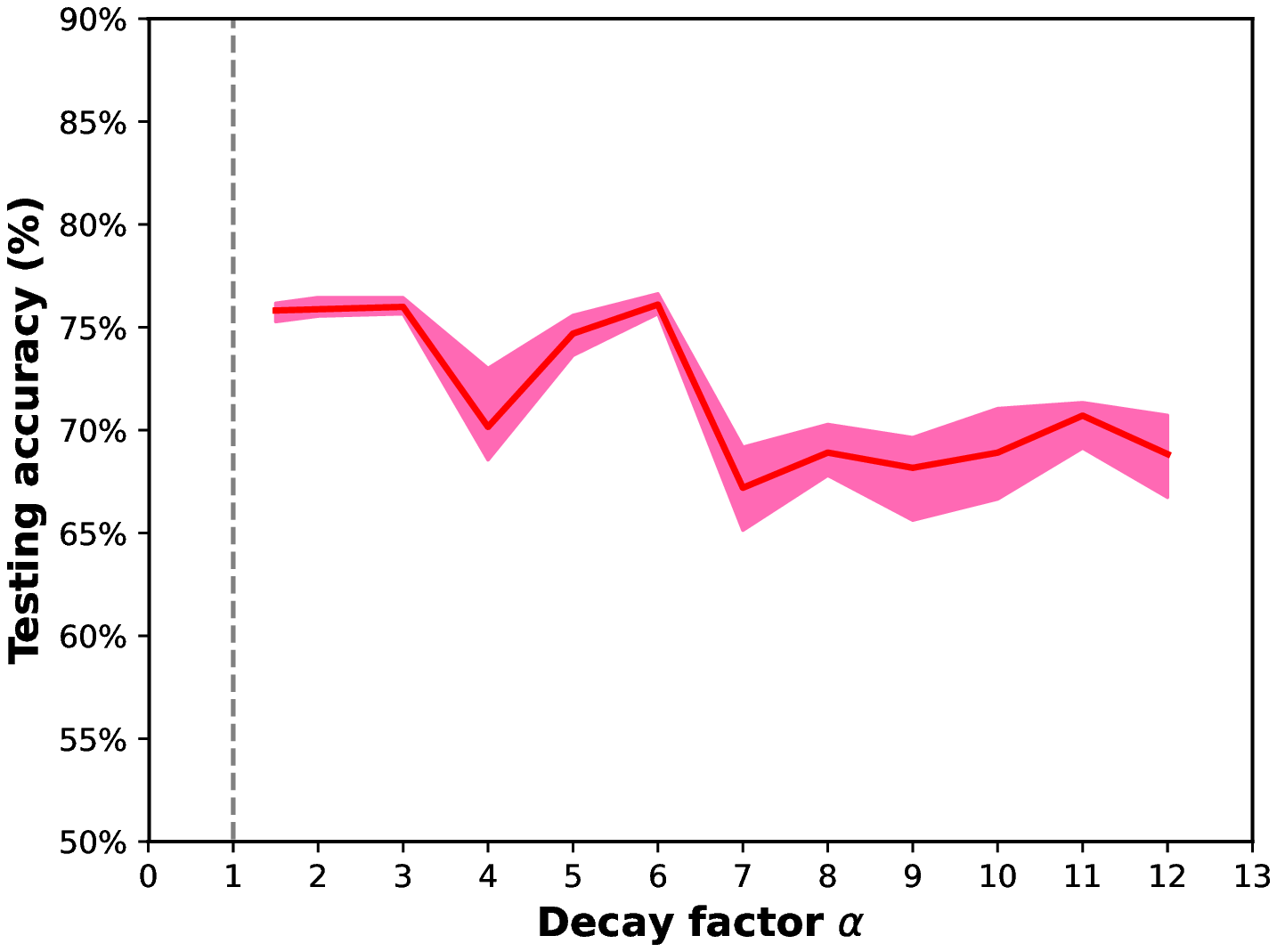}}
   \caption{The performance of decay factor $\alpha$ on CIFAR100}
\label{fig:stepdecay:cifar100:rate} 
 \vskip -0.2in
\end{figure}

\subsection*{\textbf{D.3 Numerical Details for Regularized Logistic Regression}}

In this part, we present the numerical results for all considered step-sizes on regularized logistic regression. The initial step-size $\eta_0 $ is best-tuned from the search grid $\left\lbrace 0.01, 0.05, 0.1, 0.5, 1, 10, 50, 100 \right\rbrace$ for all step-sizes.
For the constant step-size, the initial step-size is $\eta = 1$. For $1/t$, $1/\sqrt{t}$, Exp(H-K-2014), Exp-Decay \citep{li2020exponential} and Step-Decay, the initial step-size $\eta_0$ is 10. We tune $a_0$ for $1/t$ and $1/\sqrt{t}$ step-sizes such that the final step-size $\eta_T $ is searched over the grid $\left\lbrace 0.01, 0.05, 0.1, 0.5, 1, 10, 25, 50, 75, 100 \right\rbrace$ ($\eta_T=0.1$). Similarly, the parameter $\beta$ of Exp-Decay \citep{li2020exponential} is chosen such that $\eta_T$ is searched over the grid $\left\lbrace 0.01, 0.05, 0.1, 0.5, 1, 10, 50, 100 \right\rbrace$ ($\eta_{T}=0.01$).  The initial period $T_0$ for Exp(H-K-2014) is $T_0=5$. For Step-Decay, the decay factor is chosen to be $\alpha=4$.

Compared to the polynomially diminishing step-sizes (e.g. $1/t$, $1/\sqrt{t}$), we can observe that Exp-Decay and Step-Decay not only yields rapid improvements initially, but they also converge to a good solution in the end. 
\begin{figure}[ht!]
\begin{center}
\centerline{\subfigure[Training loss]{ \label{fig:rcv:loss} \includegraphics[width=0.3\textwidth,height=1.6in]{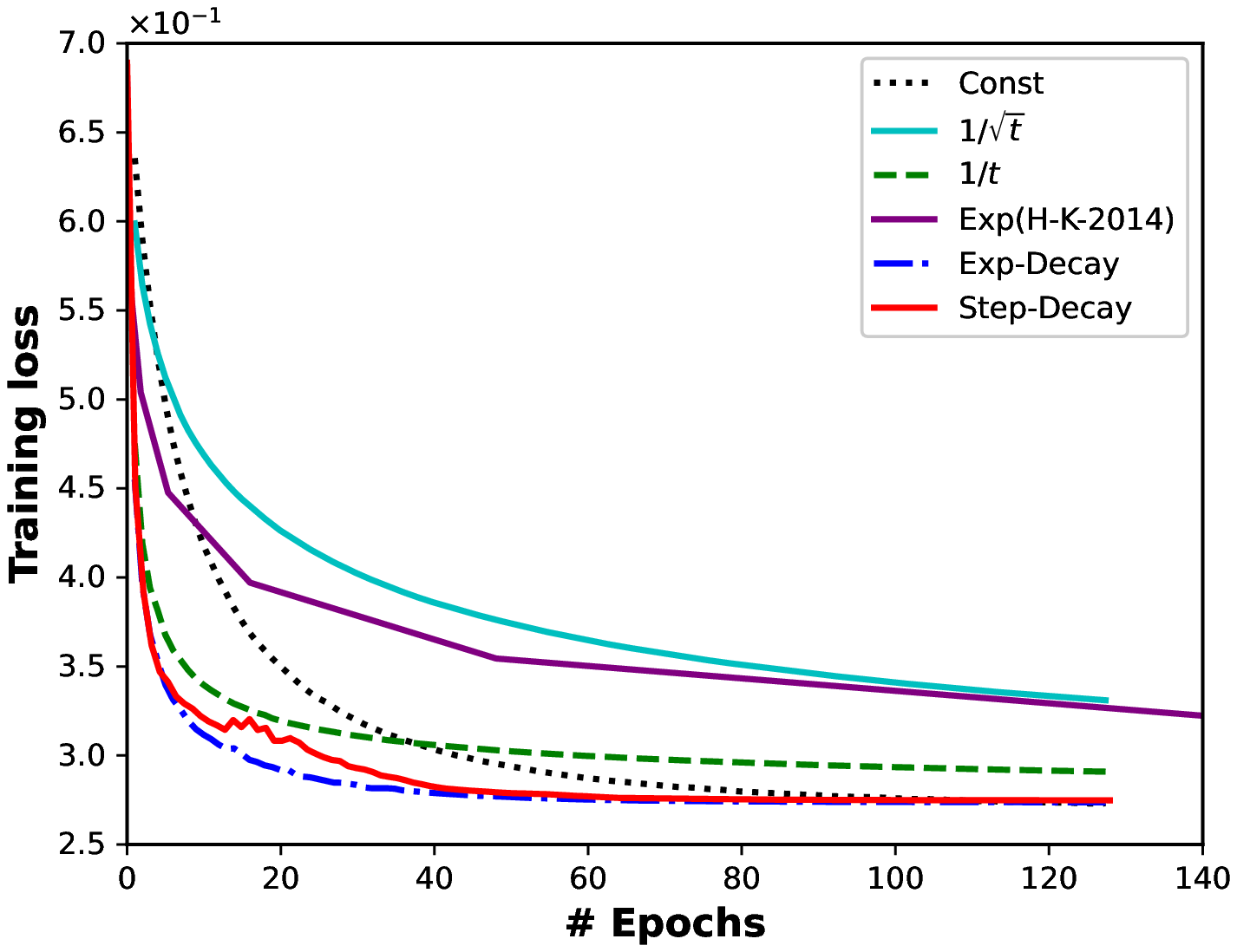} }
   \subfigure[Testing accuracy]{ \label{fig:rcv:accuracy} \includegraphics[width=0.3\textwidth,height=1.6in]{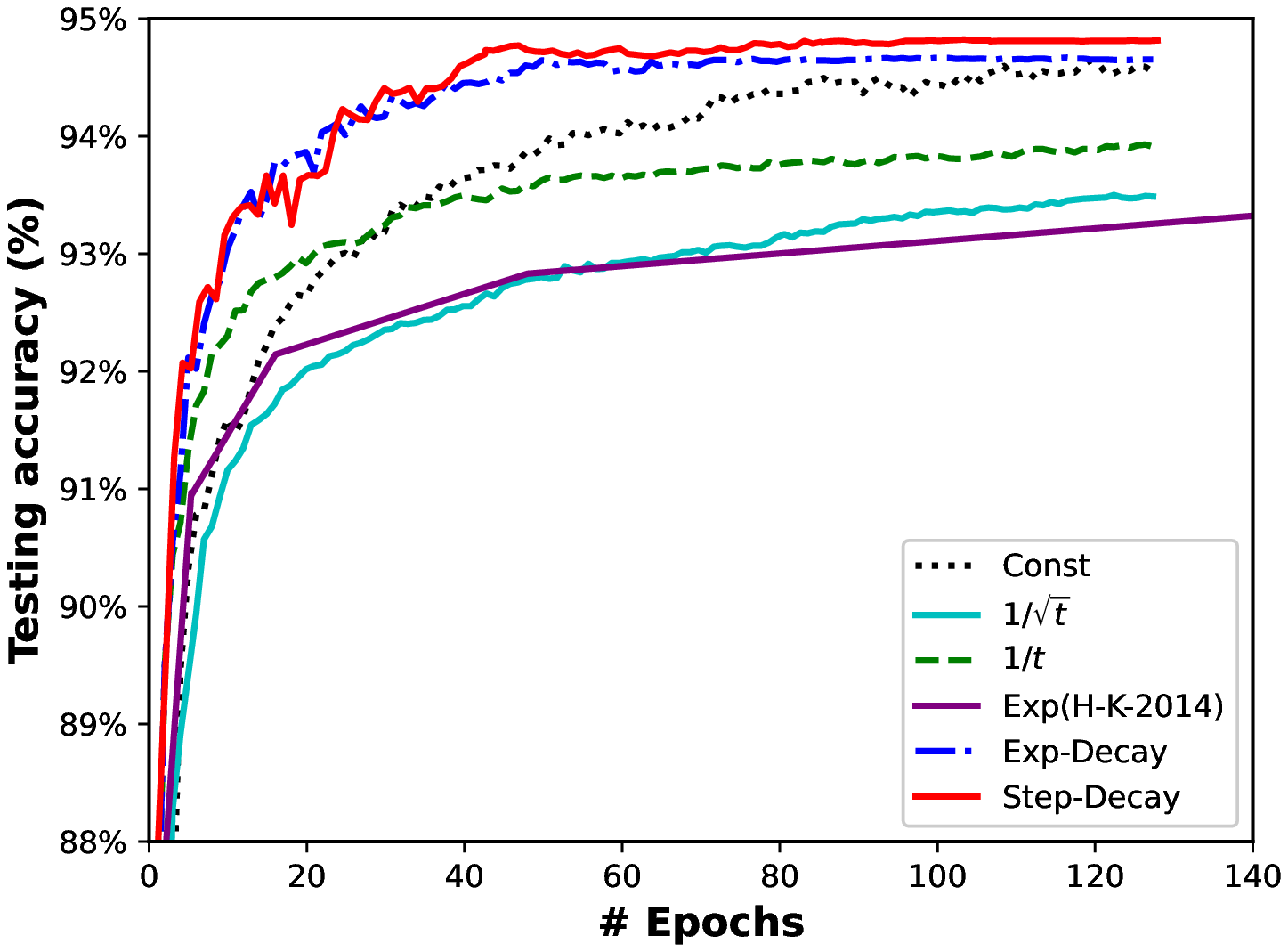} }
}
\caption{The results on rcv1.binary - logistic($L_2$)}
\label{fig:stepdecay:rcv} 
\end{center}
\vskip -0.2in
 \end{figure}

\end{document}